\documentclass[11pt,twoside]{article}
\usepackage[utf8]{inputenc}
\usepackage{amsmath}
\usepackage{amsfonts}
\usepackage{amssymb}
\usepackage{amsthm}
\usepackage[english]{babel}
\usepackage{hyperref}
\usepackage[alphabetic]{amsrefs}
\usepackage{geometry}
\geometry{left=3cm,right=3cm,top=4cm,bottom=3.5cm}
\usepackage{graphicx}
\usepackage{mathrsfs}
\usepackage{parskip}
\setlength{\parskip}{\baselineskip}
\usepackage{paralist}

\pltopsep=-6pt
\plitemsep=4pt
\plparsep=4pt
\hypersetup{
	colorlinks   = true, 
	urlcolor     = blue, 
	linkcolor    = blue,
	citecolor    = blue 
}

\usepackage{fancyhdr}
\pagestyle{fancy}
\fancyhf{}
\fancyhead[CE]{Akashdeep Dey}
\fancyhead[CO]{The Allen-Cahn equation on the complete Riemannian manifolds of finite volume}
\cfoot{\thepage}
\setlength{\headsep}{8pt}

\usepackage{mathtools}
\mathtoolsset{showonlyrefs}

\makeatletter
\def\thm@space@setup{%
	\thm@preskip=0pt \thm@postskip=0pt
}
\makeatother
\usepackage{etoolbox}
\AtBeginEnvironment{proof}{\vspace*{-0.5\parskip}}


\begin{document}
\title{\vspace{-1ex}\textbf{The Allen-Cahn equation on the complete Riemannian manifolds of finite volume}}
\author{Akashdeep Dey \thanks{Email: adey@math.princeton.edu, dey.akash01@gmail.com}
}
\date{}
\maketitle

\theoremstyle{plain}
\newtheorem{thm}{Theorem}[section]
\newtheorem{lem}[thm]{Lemma}
\newtheorem{pro}[thm]{Proposition}
\newtheorem{clm}[thm]{Claim}
\newtheorem*{thm*}{Theorem}
\newtheorem*{lem*}{Lemma}
\newtheorem*{clm*}{Claim}

\theoremstyle{definition}
\newtheorem{defn}[thm]{Definition}
\newtheorem{ex}[thm]{Example}
\newtheorem{rmk}[thm]{Remark}

\numberwithin{equation}{section}

\newcommand{\mf}{manifold\;}\newcommand{\mfs}{manifolds\;}
\newcommand{\vf}{varifold\;}
\newcommand{\hy}{hypersurface\;}
\newcommand{\Rm}{Riemannian\;}
\newcommand{\cn}{constant\;}
\newcommand{\mt}{metric\;} 
\newcommand{\st}{such that\;}
\newcommand{\Thm}{Theorem\;}
\newcommand{\Lem}{Lemma\;}
\newcommand{\Pro}{Proposition\;}
\newcommand{\eqn}{equation\;}
\newcommand{\te}{there exist\;}\newcommand{\tes}{there exists\;}\newcommand{\Te}{There exist\;}\newcommand{\Tes}{There exists\;}\newcommand{\fa}{for all\;}
\newcommand{\tf}{Therefore,\;} \newcommand{\hn}{Hence\;}\newcommand{\Sn}{Since\;}\newcommand{\sn}{since\;}\newcommand{\nx}{\Next,\;}\newcommand{\df}{define\;}
\newcommand{\wrt}{with respect to\;}
\newcommand{\bbr}{\mathbb{R}}\newcommand{\bbq}{\mathbb{Q}}
\newcommand{\bbn}{\mathbb{N}}
\newcommand{\bbz}{\mathbb{Z}}
\newcommand{\mres}{\scalebox{1.8}{$\llcorner$}}
\newcommand{\ra}{\rightarrow}
\newcommand{\fn}{function\;}
\newcommand{\lra}{\longrightarrow}
\newcommand{\sps}{Suppose\;}
\newcommand{\del}{\partial}
\newcommand{\seq}{sequence\;}\newcommand{\w}{with\;}
\newcommand{\cts}{continuous\;} 
\newcommand{\bF}{\mathbf{F}} 
\newcommand{\bM}{\mathbf{M}} 
\newcommand{\bL}{\mathbf{L}}
\newcommand{\cm}{\mathcal{C}(M)}
\newcommand{\zn}{\mathcal{Z}_n(M, \mathbb{Z}_2)}
\newcommand{\qte}{\hat{q}_{\ve}}\newcommand{\qe}{q_{\ve}}\newcommand{\inn}{\mathring{N}}

\newcommand{\cA}{\mathcal{A}}\newcommand{\cB}{\mathcal{B}}\newcommand{\cC}{\mathcal{C}}\newcommand{\cD}{\mathcal{D}}\newcommand{\cE}{\mathcal{E}}\newcommand{\cF}{\mathcal{F}}\newcommand{\cG}{\mathcal{G}}\newcommand{\cH}{\mathcal{H}}\newcommand{\cI}{\mathcal{I}}\newcommand{\cJ}{\mathcal{J}}\newcommand{\cK}{\mathcal{K}}\newcommand{\cL}{\mathcal{L}}\newcommand{\cM}{\mathcal{M}}\newcommand{\cN}{\mathcal{N}}\newcommand{\cO}{\mathcal{O}}\newcommand{\cP}{\mathcal{P}}\newcommand{\cQ}{\mathcal{Q}}\newcommand{\cR}{\mathcal{R}}\newcommand{\cS}{\mathcal{S}}\newcommand{\cT}{\mathcal{T}}\newcommand{\cU}{\mathcal{U}}\newcommand{\cV}{\mathcal{V}}\newcommand{\cW}{\mathcal{W}}\newcommand{\cX}{\mathcal{X}}\newcommand{\cY}{\mathcal{Y}}\newcommand{\cZ}{\mathcal{Z}}

\newcommand{\sA}{\mathscr{A}}\newcommand{\sB}{\mathscr{B}}\newcommand{\sC}{\mathscr{C}}\newcommand{\sD}{\mathscr{D}}\newcommand{\sE}{\mathscr{E}}\newcommand{\sF}{\mathscr{F}}\newcommand{\sG}{\mathscr{G}}\newcommand{\sH}{\mathscr{H}}\newcommand{\sI}{\mathscr{I}}\newcommand{\sJ}{\mathscr{J}}\newcommand{\sK}{\mathscr{K}}\newcommand{\sL}{\mathscr{L}}\newcommand{\sM}{\mathscr{M}}\newcommand{\sN}{\mathscr{N}}\newcommand{\sO}{\mathscr{O}}\newcommand{\sP}{\mathscr{P}}\newcommand{\sQ}{\mathscr{Q}}\newcommand{\sR}{\mathscr{R}}\newcommand{\sS}{\mathscr{S}}\newcommand{\sT}{\mathscr{T}}\newcommand{\sU}{\mathscr{U}}\newcommand{\sV}{\mathscr{V}}\newcommand{\sW}{\mathscr{W}}\newcommand{\sX}{\mathscr{X}}\newcommand{\sY}{\mathscr{Y}}\newcommand{\sZ}{\mathcal{Z}}

\newcommand{\al}{\alpha}\newcommand{\be}{\beta}\newcommand{\ga}{\gamma}\newcommand{\de}{\delta}\newcommand{\ve}{\varepsilon}\newcommand{\et}{\eta}\newcommand{\ph}{\phi}\newcommand{\vp}{\varphi}\newcommand{\ps}{\psi}\newcommand{\ka}{\kappa}\newcommand{\la}{\lambda}\newcommand{\om}{\omega}\newcommand{\rh}{\rho}\newcommand{\si}{\sigma}\newcommand{\tht}{\theta}\newcommand{\ta}{\tau}\newcommand{\ch}{\chi}\newcommand{\ze}{\zeta}\newcommand{\Ga}{\Gamma}\newcommand{\De}{\Delta}\newcommand{\Ph}{\Phi}\newcommand{\Ps}{\Psi}\newcommand{\La}{\Lambda}\newcommand{\Om}{\Omega}\newcommand{\Si}{\Sigma}\newcommand{\Tht}{\Theta}\newcommand{\na}{\nabla}
\newcommand{\ep}{\epsilon}\newcommand{\vt}{\vartheta}

\newcommand{\fu}{\mathfrak{u}}\newcommand{\HN}{H^1(N)}\newcommand{\phh}{\hat{\phi}}\newcommand{\gab}{\bar{\gamma}}
\newcommand{\nm}[1]{\left\|#1\right\|}\newcommand{\md}[1]{\left|#1\right|}\newcommand{\Md}[1]{\Big|#1\Big|}\newcommand{\db}[1]{[\![#1]\!]}
\newcommand{\vol}{\operatorname{Vol}}\newcommand{\Ee}[1]{E_{\varepsilon}(#1)}\newcommand{\Eee}[1]{E_{\varepsilon}\left(#1\right)}\newcommand{\Ea}[1]{E_{\alpha}(#1)}\newcommand{\Eaa}[1]{E_{\alpha}\left(#1\right)}\newcommand{\Eab}[1]{E_{\alpha}\big(#1\big)}
\newcommand{\se}{\sqrt{\ve}}\newcommand{\woe}{w_{1,\ve}(t)}\newcommand{\wte}{w_{2,\ve}(t)}\newcommand{\we}{w_{\al}}\newcommand{\htl}{\tilde{h}}\newcommand{\htt}{\tilde{h}(t^*)}\newcommand{\lte}{\tilde{\la}_{\ve}}\newcommand{\bi}{b_{i}^{r_i}}\newcommand{\bk}{b_{k}^{r_k}}
\vspace{-2ex}
\begin{abstract}
	\vspace{-1.5ex}
	\noindent
The semi-linear, elliptic PDE \(AC_{\ve}(u):=-\ve^2\De u+W'(u)=0\) is called the Allen-Cahn equation. In this article we will prove the existence of finite energy solution to the Allen-Cahn equation on certain complete, non-compact manifolds. More precisely, suppose \(M^{n+1}\) (with \(n+1\geq 3\)) is a complete Riemannian \mf of finite volume. Then \tes \(\ve_0>0\), depending on the ambient \Rm metric, \st \fa \(0<\ve\leq\ve_0\), \tes\(\fu_{\ve}:M\ra (-1,1)\) satisfying \(AC_{\ve}(\fu_{\ve})=0\) with the energy \(E_{\ve}(\fu_{\ve})<\infty\) and the Morse index \(\textup{Ind}(\fu_{\ve})\leq 1\).  Moreover, \(0<\liminf_{\ve\ra 0}E_{\ve}(\fu_{\ve})\leq\limsup_{\ve\ra 0}E_{\ve}(\fu_{\ve})<\infty.\) Our result is motivated by the theorem of Chambers-Liokumovich \cite{CL} and Song \cite{Song2}, which says that \(M\) contains a complete minimal \hy \(\Si\) \w \(0<\cH^n(\Si)<\infty.\) This theorem can be recovered from our result.
\end{abstract}

\section{Introduction}
Minimal hypersurfaces are the critical points of the area functional.  By the combined works of Almgren \cite{Alm}, Pitts \cite{Pit} and Schoen-Simon \cite{SS}, every closed \Rm \mf \\
\((M^{n+1},g)\), \(n+1 \geq 3\), contains a closed minimal hypersurface, which is smooth and embedded outside a singular set of Hausdorff dimension \(\leq n-7\). 

Recently, Almgren-Pitts min-max theory has been further extended and it has been discovered that minimal hypersurfaces exist in abundance. By the works of Marques-Neves \cite{MN_ricci_positive} and Song \cite{Song1}, every closed Riemannian manifold \((M^{n+1},g)\), \(3\leq n+1\leq 7\), contains infinitely many closed, minimal hypersurfaces. This was conjectured by Yau \cite{Yau}. In \cite{IMN}, Irie, Marques and Neves proved that for a generic \mt \(g\) on \(M\), the union of all closed, minimal hypersurfaces is dense in \((M,g)\). This theorem was later quantified by Marques, Neves and Song in \cite{MNS} where they proved that for a generic metric there exists an equidistributed sequence of closed, minimal hypersurfaces in \((M,g)\). Recently, Song and Zhou \cite{SZ} proved the generic scarring phenomena for minimal hypersurfaces, which can be interpreted as the opposite of the equidistribution phenomena. In \cite{Zhou}, Zhou proved that for a generic (bumpy) metric, the min-max minimal hypersurfaces have multiplicity one, which was conjectured by Marques and Neves. Using this theorem, Marques and Neves \cite{MN_index, MN_index_2} proved that for a generic (bumpy) \mt \(g\), there exists a sequence of closed, two sided minimal hypersurfaces \(\{\Si_k\}_{k=1}^{\infty}\) in \((M,g)\) \st Ind\((\Si_k)=k\) and \(\mathcal{H}^n(\Si_k) \sim k^{\frac{1}{n+1}}\). In higher dimensions, Li \cite{Li} proved the existence of infinitely many closed minimal hypersurfaces (with optimal regularity) for a generic set of metrics. While the arguments in \cite{IMN}, \cite{MNS} and \cite{Li} depend on the Weyl law for the volume spectrum, which was conjectured by Gromov \cite{Gro1} and proved by Liokumovich, Marques and Neves \cite{LMN}, the arguments in \cite{Song1} and \cite{SZ} use the cylindrical Weyl law, which was proved by Song \cite{Song1}.

In the above mentioned theorems, the ambient manifolds are assumed to be closed. If \(M\) is a complete non-compact manifold, Gromov \cite{Gro2} proved that either \(M\) contains a complete minimal \hy with finite area or every compact domain of \(M\) admits a (possibly singular) strictly mean convex foliation. In \cite{Mon}, Montezuma proved that a complete \Rm \mf with a bounded, strictly mean concave domain contains a complete minimal hypersurface with finite area. The existence of minimal surfaces in hyperbolic $3$-manifolds has been proved by Collin-Hauswirth-Mazet-Rosenberg \cite{CHMR}, Huang-Wang \cite{HW} and Coskunuzer \cite{C}. In \cite{CK}, Chodosh and Ketover proved the existence of minimal planes in asymptotically flat \(3\)-manifolds. In \cite{CL}, Chambers and Liokumovich proved that every complete \Rm \mf with finite volume contains a complete minimal hypersurface with finite area. In \cite{Song2}, Song proved Yau's conjecture on certain complete non-compact manifolds. Moreover, he also proved the local version of the above mentioned theorem of Gromov \cite{Gro2}, using which he gave alternative proofs of the above mentioned theorems of Montezuma \cite{Mon} and Chambers-Liokumovich \cite{CL}.

In \cite{G}, Guaraco introduced a new approach for the min-max construction of minimal hypersurfaces, which was further developed by Gaspar and Guaraco in \cite{GG1}. This approach is based on the study of the limiting behaviour of solutions to the Allen-Cahn equation. The Allen-Cahn equation (with parameter \(\ve >0\)) is the following semi-linear, elliptic PDE
\begin{equation}\label{AC.eqn}
AC_{\ve}(u):=-\ve^{2}\De u+ W'(u)=0
\end{equation}
where \(W:\bbr \ra [0,\infty)\) is a double well potential e.g. \(W(t)=\frac{1}{4}(1-t^2)^2.\) The solutions of this equation are precisely the critical points of the energy functional
\[E_{\ve}(u)=\int_M\ve\frac{|\nabla u|^2}{2}+\frac{W(u)}{\ve}.\]
Informally speaking, as \(\ve\ra 0\), the level sets of the solutions to \eqref{AC.eqn} (with uniformly bounded energy) accumulate around a generalized minimal hypersurface (called a \textit{limit-interface}). In particular, Modica \cite{M} and Sternberg \cite{S} proved that as \(\ve\ra 0\), the energy minimizing solutions to \eqref{AC.eqn} converge to a area minimizing hypersurface. For general solutions to \eqref{AC.eqn}, Hutchinson and Tonegawa \cite{HT} proved that the limit-interface is a stationary, integral varifold. Moreover, if the solutions are stable, by the works of Tonegawa \cite{Ton}, Wickramasekera \cite{W} and Tonegawa-Wickramasekera \cite{TW}, the limit-interface is a stable minimal \hy with optimal regularity. In \cite{G}, Guaraco proved that the limit-interface has optimal regularity if the solutions have uniformly bounded Morse index. Furthermore, by a mountain-pass argument, he proved the existence of critical points of \(E_{\ve}\) (on a closed \Rm manifold) with uniformly bounded energy and Morse index. In this way he obtained a new proof of the previously mentioned theorem of Almgren-Pitts-Schoen-Simon. In the case of surfaces (i.e. when the ambient dimension $=2$), Mantoulidis \cite{Man1} proved the regularity of the geodesic limit-interface for the solutions with uniformly bounded Morse index. 

The index upper bound of the limit-interface was proved by Hiesmayr \cite{H} assuming the limit-interface is two-sided and by Gaspar \cite{Gaspar} in the general case. In \cite{GG2}, Gaspar and Guaraco proved the Weyl law for the phase transition spectrum and gave alternative proofs of the density \cite{IMN} and the equidistribution \cite{MNS} theorems. In \cite{CM}, Chodosh and Mantoulidis proved the multiplicity one conjecture in the Allen-Cahn setting in dimension \(3\) and the upper semi-continuity of the Morse index when the limit-interface has multiplicity one. As a consequence, they proved that for a generic (bumpy) metric \(g\) on a closed manifold \(M^3\), there exists a sequence of closed, two-sided minimal surfaces \(\{\Si_p\}_{p=1}^{\infty}\) in \((M^3,g)\) \st \(\text{Ind}(\Si_p)=p\) and \(\text{area}(\Si_p)\sim p^{1/3}\). In higher dimensions, the multiplicity one conjecture for the one parameter Allen-Cahn min-max has been proved by Bellettini \cite{B1,B2}. In \cite{GMN} Guaraco, Marques and Neves proved that a strictly stable limit-interface must have multiplicity one. 

In \cite{BW3}, Bellettini and Wickramasekera proved the existence of closed prescribed mean curvature (PMC) hypersurfaces in arbitrary closed \Rm \mfs using the min-max solutions of the inhomogeneous Allen-Cahn equations. To prove the regularity of the Allen-Cahn PMC hypersurfaces, they used their earlier works \cite{BW1,BW2} on the regularity and compactness theory of stable PMC hypersurfaces. Previously, Zhou and Zhu \cite{ZZ1,ZZ2} developed a min-max theory for the construction of closed PMC hypersurfaces which is parallel to the Almgren-Pitts min-max theory. The estimates for the index and nullity of the Allen-Cahn PMC hypersurfaces have been proved by Mantoulidis \cite{Man2}. 

The asymptotic behaviour of the critical points of the Ginzburg–Landau functional (which approximates the codimension-$2$ area functional) has been studied by Stern \cite{Stern1,Stern2}, Cheng \cite{Cheng} and Pigati-Stern \cite{PS}. In particular, in \cite{PS} Pigati and Stern proved the existence of a codimension-$2$ stationary, integral varifold in an arbitrary closed \Rm manifold. This theorem was previously proved by Almgren \cite{Alm} using more complicated geometric measure theory approach. 

If \(\Si\) is a non-degenerate, separating, closed minimal hypersurface in a closed Riemannian manifold, Pacard and Ritor\'{e} \cite{PR} constructed solutions of the Allen-Cahn equation, for sufficiently small \(\ve>0\), whose level sets converge to \(\Si\). The uniqueness of these solutions has been proved by Guaraco, Marques and Neves \cite{GMN}. The construction of Pacard and Ritor\'{e} has been extended by Caju and Gaspar \cite{CG} in the case when all the Jacobi fields of \(\Si\) are induced by the ambient isometries. Assuming a positivity condition on the Ricci curvature of the ambient manifold, del Pino-Kowalczyk-Wei-Yang \cite{dPKWY} constructed solutions of the Allen-Cahn equation whose energies concentrate on a non-degenerate, closed minimal hypersurface with multiplicity \(>1\).

In this article we will show the existence of finite energy min-max solution to the Allen-Cahn equation (for $\ve$ sufficiently small) on complete \Rm manifolds of finite volume. More precisely, we will prove the following theorem, which is motivated by the previously mentioned theorem of Chambers-Liokumovich \cite{CL} and Song \cite{Song2}.

\begin{thm}\label{t:main.thm}
Let \(M^{n+1}\) be a complete, \Rm manifold, \(n+1\geq 3\), \st \(\vol(M)\) is finite. Then there exists \(\ve_0>0\), depending on the ambient \Rm metric, \st for all \(0<\ve\leq \ve_0\), \tes \(\fu_{\ve}:M\ra (-1,1)\) satisfying \(AC_{\ve}(\fu_{\ve})=0\), \(\textup{Ind}(\fu_{\ve})\leq 1\) and \(E_{\ve}(\fu_{\ve})<\infty\). Moreover, \tes a good set \(U\subset M\) (see Section \ref{s.2.2}) \st 
\begin{equation}\label{e1}
0<\liminf_{\ve\ra 0^{+}}E_{\ve}(\fu_{\ve},U)\leq\limsup_{\ve\ra 0^+}E_{\ve}(\fu_{\ve})<\infty.
\end{equation}
\end{thm}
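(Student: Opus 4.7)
The plan is to adapt Guaraco's Allen-Cahn mountain-pass construction \cite{G} to a compact exhaustion of $M$ and then pass to the limit, using the finite volume hypothesis to get a uniform upper bound on the min-max energy and the good set $U\subset M$ of Section \ref{s.2.2} to keep the min-max value bounded below, uniformly in the exhaustion and in $\ve$.

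First, using finite volume and a coarea argument in the spirit of \cite{CL,Song2}, I would choose a nested compact exhaustion $\Omega_1\subset\Omega_2\subset\cdots\subset M$ with smooth boundary, $\bigcup_k\Omega_k=M$, such that each $\Omega_k$ contains $U$ and $\cH^n(\del\Omega_k)$ is uniformly bounded. Fix $\ve>0$ small. On $\Omega_k$ I would apply mountain-pass to $\Ee{\cdot}$ restricted to the class of $H^1(\Omega_k)$ functions equal to $+1$ near $\del\Omega_k$, with endpoints $+1$ and an admissible $v_k$ equal to $-1$ on a compact set containing $U$. As the test path I would take the standard transported one-dimensional profile $\ga_k(t)(x)=\qe(t-\rh_k(x))$, where $\qe$ is the rescaled Allen-Cahn heteroclinic and $\rh_k$ is built from a sweepout of $\Omega_k$ by sub-level sets whose $n$-area is uniformly controlled (supplied by coarea and $\vol(M)<\infty$). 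This yields $\max_t\Ee{\ga_k(t)}\leq L_0$ with $L_0$ depending only on $\vol(M)$ and the good-set data, independent of $k$, and satisfying $\limsup_{\ve\ra 0^+}L_0<\infty$. Mountain-pass then produces $u_{k,\ve}$ solving $AC_\ve(u_{k,\ve})=0$ on $\Omega_k$ with $\textup{Ind}(u_{k,\ve})\leq 1$ and $c_\ve\leq\Ee{u_{k,\ve}}\leq L_0$, where the lower bound $c_\ve>0$ comes from the good-set framework.

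Second, since $|u_{k,\ve}|<1$ by the maximum principle, interior Schauder estimates give uniform local $C^{2,\al}$ bounds on the $u_{k,\ve}$. A diagonal subsequence $k\ra\infty$ converges locally smoothly to $\fu_\ve\in C^{2,\al}_{\mathrm{loc}}(M)$ with $\fu_\ve(M)\subset[-1,1]$ and $AC_\ve(\fu_\ve)=0$; the strong maximum principle promotes this to $\fu_\ve(M)\subset(-1,1)$. The upper energy bound passes through to give $\Ee{\fu_\ve}\leq L_0<\infty$; the local bound $\Ee{u_{k,\ve},U}\geq c_\ve$ passes through to give $\Ee{\fu_\ve,U}\geq c_\ve$; and semi-continuity of the Morse index under smooth convergence preserves $\textup{Ind}(\fu_\ve)\leq 1$. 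The two-sided bound \eqref{e1} then follows from $\limsup_{\ve\ra 0^+}L_0<\infty$ and $\liminf_{\ve\ra 0^+}c_\ve>0$ in the $\ve$-uniform construction.

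The main obstacle will be producing the lower bound $c_\ve$ uniformly in both $k$ and $\ve$, i.e.\ ensuring $\liminf_{\ve\ra 0^+}c_\ve>0$. This requires the good set $U$ to encode an isoperimetric-type obstruction: any hypersurface in $M$ separating two prescribed subregions of $U$ should have $n$-volume bounded below by a positive constant $h_0$. Granted such a property (which should be the content of Section \ref{s.2.2}), the Modica-Sternberg comparison forces any function along the sweepout whose zero-level is such a separating hypersurface to carry energy at least $2\si_0 h_0 - o_\ve(1)$, giving the desired uniform lower bound on the mountain-pass level. With this isoperimetric structure of good sets in complete finite-volume manifolds in hand (following \cite{CL,Song2}), the rest of the argument is a careful implementation of \cite{G} on the exhaustion $\{\Omega_k\}$ together with a routine elliptic passage to the limit.
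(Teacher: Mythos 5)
Your scaffolding (compact exhaustion, an explicit sweepout built from the one-dimensional profile to get an $\ve$-uniform upper bound, elliptic estimates plus a diagonal argument to pass to the limit, lower semicontinuity of the index) matches the paper's outline, but there is a genuine gap at the central point. What must be produced is not merely a positive lower bound on the min-max \emph{level} $c_\ve$, but a critical point that carries a definite amount of energy \emph{inside the fixed good set} $U$, uniformly in $k$ and $\ve$. Your first paragraph only records the global bound $c_\ve\leq E_{\ve}(u_{k,\ve})$, and your second paragraph silently upgrades this to the local bound $E_{\ve}(u_{k,\ve},U)\geq c_\ve$, which was never established. The global bound does not survive the limit $k\ra\infty$: under $C^2_{\mathrm{loc}}$ convergence along the exhaustion the energy of $u_{k,\ve}$ can concentrate near $\del\Omega_k$ or drift to infinity, and the limit $\fu_\ve$ could be a constant $\pm 1$ with $E_{\ve}(\fu_\ve,U)=0$, destroying the left-hand inequality of \eqref{e1}. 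This localization is exactly the content of Proposition \ref{p.intersection} (the Allen--Cahn analogue of Chambers--Liokumovich's Proposition 2.1): every admissible path contains a time at which the total energy is at least $\tilde{\la}_{\ve}(\Om)$ \emph{and} the energy in $\Om$ is at least $\et^*$, with $\ve^*,\et^*$ depending only on the metric near $\tilde{\Om}$; this is then fed into Ghoussoub's min-max theorem (Theorem \ref{t.min.max}) with the dual set $L$ consisting of functions satisfying both inequalities, so that the critical point itself satisfies $E_{\ve}(\cdot,\Om)\geq\et^*$, uniformly in the exhaustion. Proving that intersection property is the bulk of the paper (nested families in $H^1$, truncation/concatenation, and the deformation Lemmas \ref{l.deformation.a}, \ref{l.deformation.b}); your appeal to an ``isoperimetric obstruction encoded by $U$'' plus Modica--Sternberg comparison neither matches the actual definition of a good set (relative width $>4\cH^n(\del U)$, see \eqref{good.set}) nor addresses the localization of the critical point's energy — it only concerns the size of the level.

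A secondary gap is the index bound. A mountain-pass construction does not automatically yield $\textup{Ind}\leq 1$: one needs the critical set at the level to be isolated (plus Fredholmness), which the paper arranges via Smith's generic finiteness theorem (Theorem \ref{t.finite}) together with an approximation in the metric and in $\ve$, and only then passes the bound through the limit. In addition, your Dirichlet-type setting on $\Omega_k$ (paths of functions equal to $+1$ near $\del\Omega_k$) would require its own Palais--Smale verification; the paper sidesteps boundary issues by isometrically embedding the compact pieces into closed manifolds via Montezuma's extension (Proposition \ref{p.Morse.extn}). These points are fixable, but as written they are additional gaps on top of the missing localization argument.
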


We will prove Theorem \ref{t:main.thm} by adapting the argument of Chambers-Liokumovich \cite{CL} in the Allen-Cahn setting. From Theorem \ref{t:main.thm}, one can recover the above mentioned theorem of Chambers-Liokumovich \cite{CL} and Song \cite{Song2}.

\begin{thm}\cite{CL,Song2}\label{t.minimal.hyp}
Let \(M^{n+1}\) be a complete, \Rm manifold, \(n+1\geq 3\), \st \(\vol(M)\) is finite. Then there exists a complete minimal \hy \(\Si\subset M\) \st \(0<\cH^n(\Si)<\infty\) and \(\Si\) has optimal regularity, i.e. \(\Si\) is smooth and embedded outside a singular set of Hausdorff dimension \(\leq n-7.\)
\end{thm}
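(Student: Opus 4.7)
The plan is to pass to the limit $\ve\to 0$ in the family of Allen-Cahn solutions produced by Theorem~\ref{t:main.thm}, and to show that the resulting limit-interface is the desired complete minimal hypersurface with optimal regularity. The key ingredients are (i) the Hutchinson--Tonegawa theorem~\cite{HT}, which converts bounded-energy sequences of Allen-Cahn critical points into stationary integral varifolds; (ii) the upper semi-continuity of the Morse index under such convergence, due to Hiesmayr~\cite{H} and Gaspar~\cite{Gaspar}; and (iii) Guaraco's bounded-Morse-index regularity theorem~\cite{G} (with Wickramasekera's stability regularity~\cite{W} handling the stable region). Since $M$ is non-compact, these results must be combined with an exhaustion and a diagonal extraction.

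Concretely, I would pick $\ve_k\searrow 0$ and set $u_k:=\fu_{\ve_k}$, obtaining the uniform bounds $AC_{\ve_k}(u_k)=0$, $\textup{Ind}(u_k)\le 1$ and $E_{\ve_k}(u_k)\le C$, together with $\liminf_k E_{\ve_k}(u_k,U)>0$ on the good set $U$, all coming from Theorem~\ref{t:main.thm} and \eqref{e1}. Choose an exhaustion $\Om_1\subset\subset\Om_2\subset\subset\cdots$ of $M$ by precompact open sets with $\bigcup_j\Om_j=M$. On each $\Om_j$ the restricted sequence has uniformly bounded energy and Morse index (any compactly supported variation in $\Om_j$ is a compactly supported variation in $M$). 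Hutchinson--Tonegawa~\cite{HT} then produces, after passing to a subsequence and dividing by the standard normalization constant $2\si_0 := \int_{-1}^{1}\sqrt{2W(s)}\,ds$, convergence of the associated energy varifolds in $\Om_j$ to a stationary integral $n$-varifold. A standard diagonal extraction over $j$ yields a single subsequence and a stationary integral $n$-varifold $V$ on all of $M$, with total mass $\|V\|(M)\le C/(2\si_0)<\infty$.

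Next, by the Morse index upper semi-continuity of Hiesmayr--Gaspar~\cite{H,Gaspar}, the limit $V$ has Morse index $\le 1$, so it is stable in the complement of at most one point. Applying Guaraco's regularity theorem~\cite{G} on each $\Om_j$, the support $\Si:=\textup{spt}(V)$ is a smooth, embedded minimal hypersurface in $M$ outside a singular set of Hausdorff dimension $\le n-7$, i.e.\ $\Si$ has optimal regularity. Finiteness $\cH^n(\Si)<\infty$ follows from integrality of $V$ and the above mass bound, while nontriviality $\cH^n(\Si)>0$ is forced by $\|V\|(U)\ge(2\si_0)^{-1}\liminf_k E_{\ve_k}(u_k,U)>0$. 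Completeness of $\Si$ is automatic: $\textup{spt}(V)$ is closed in $M$, and a closed subset of a complete manifold is complete.

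The main obstacle is the non-compactness of $M$. The Allen-Cahn convergence and regularity theorems are usually stated on closed manifolds or bounded domains, so some care is needed to patch the local limit varifolds into a single global object. The \emph{global} (not merely local) energy bound from Theorem~\ref{t:main.thm} is what makes this possible, since it forces the limit varifold to have finite total mass on $M$; once that is in hand, the local theory applied on each $\Om_j$ and the diagonal argument yield the global conclusion.
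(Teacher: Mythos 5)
Your proposal is correct and follows essentially the same route as the paper: exhaust $M$ by bounded open sets, apply Theorem \ref{thm interface} (which already packages the Hutchinson--Tonegawa compactness and Guaraco's bounded-index regularity, so your extra appeal to index upper semi-continuity is superfluous) to the solutions from Theorem \ref{t:main.thm} on each piece, diagonalize, glue the limit varifolds, and read off $0<\nm{V}(M)<\infty$ from \eqref{e1}. The only nitpick is that the lower mass bound obtained from varifold convergence should be stated on the compact set $\text{Clos}(U)$ rather than on the open set $U$ (weak convergence of Radon measures gives the needed inequality on compact sets), exactly as in the paper's \eqref{e2}.
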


As in \cite{CL} and \cite{Song2}, Theorem \ref{t:main.thm} and Theorem \ref{t.minimal.hyp} continue to hold if the assumption \(\vol(M)<\infty\) is replaced by the weaker assumption that  \tes a sequence \(\{U_i\}_{i=1}^{\infty}\), where each \(U_i\subset M\) is a bounded open set \w smooth boundary, \st \(U_{i}\subset U_{i+1}\) for all \(i\in \bbn\) and $\lim\limits_{i\ra \infty}\cH^n(\del U_i)=0$.

\textbf{Acknowledgements.}  I am very grateful to my advisor Prof. Fernando Cod\'{a} Marques for many helpful discussions and for his support and guidance. The author is partially supported by NSF grant DMS-1811840.

\section{Notation and Preliminaries}
\subsection{Notation}
Here we summarize the notation which will be frequently used later.
\begin{itemize}
	\item \(\cH^k\) : the Hausdorff measure of dimension \(k\).
	\item \(B(p,r)\) : the geodesic ball centered at \(p\) with radius \(r\).
	\item \(d(-,S)\) : distance from a set \(S\).
	\item \(H^1(N)\) : the Sobolev space \(\left\{u\in L^2(N):\text{ the distributional derivative }\na u \in L^2(N,TN)\right\}\).
	\item \(e_{\ve}(u)\)  \(=\ve \frac{|\na u|^2}{2}+\frac{W(u)}{\ve}\).
	\item 	\(E_{\ve}(u)\)  \(=\int_N e_{\ve}(u)\), where \(N\) is the ambient manifold.
	\item \(E_{\ve}(u,S)\)  \(=\int_{S} e_{\ve}(u)\), where \(S\) is a measurable set.
	\item \(AC_{\ve}(u)\)  \(=-\ve^2\De u+W'(u)\).
	\item \(2\si\) \(=\) the energy of the \(1\)-D solution to the Allen-Cahn equation (see \eqref{e.def.F}, \eqref{e.energy.1D.soln}).
	\item For two measurable functions \(u\) and \(v\), we say that \(u\leq v\) (resp. \(u\geq v\)) if \(u(x)\leq v(x)\) (resp. \(u(x)\geq v(x)\)) for a.e. \(x\).
\end{itemize}
\subsection{The Allen-Cahn equation and convergence of the phase interfaces}
In this subsection we will briefly discuss about the Allen-Cahn equation and its connection with the minimal hypersurfaces. \sps $\Om^{n+1}$ is the interior of a compact \Rm manifold. Let \(W:\bbr \ra [0,\infty)\) be a smooth, symmetric, double well potential. More precisely, \(W\) has the following properties. $W$ is bounded; \(W(-t)=W(t)\) for all \(t \in \bbr\); \(W\) has exactly three critical points \(0,\pm 1\); \(W(\pm 1)=0\) and \(W''(\pm 1)>0\) i.e. \(\pm 1\) are non-degenerate minima; \(0\) is a local maximum. For $u \in H^1(\Om)$, the \textit{$\ve$-Allen-Cahn energy of $u$} is given by 
\[E_{\ve}(u)=\int_{\Om}\ve\frac{|\nabla u|^2}{2}+\frac{W(u)}{\ve}.\]
As mentioned earlier, 
\[AC_{\ve}(u):=-\ve^2\De u+ W'(u)=0\]
if and only if \(u\) is a critical point of \(E_{\ve}\).

Let \(F:\bbr \ra \bbr\) and the energy constant \(\si\) be defined as follows.
\begin{equation}\label{e.def.F}
F(t)=\int_{0}^{t}\sqrt{W(s)/2}\; ds;\quad \quad \si=\int_{-1}^{1}\sqrt{W(s)/2}\;ds\quad \text{so that}\quad F(\pm 1)=\pm \frac{\si}{2}.
\end{equation}
For an \(n\)-rectifiable set \(S\subset \Om\), let $\md{S}$ denote the $n$-varifold defined by $S$. Given \(u \in C^1(\Om)\), we set \(\tilde{u}=F \circ u\). The \(n\)-varifold associated to \(u\) is defined by
\[V[u](A)=\frac{1}{\si}\int_{-\infty}^{\infty}\md{\{\tilde{u}=s\}}(A)\; ds,\]
for every Borel set \(A \subset G_n\Om\) (where $G_n\Om$ denotes the Grassmannian bundle of unoriented $n$-dimensional hyperplanes on $\Om$).

Building on the works of Hutchinson-Tonegawa \cite{HT}, Tonegawa \cite{Ton} and Tonegawa-Wickramasekera \cite{TW}, Guaraco \cite{G} has proved the following theorem.

\begin{thm}[\cite{HT,Ton,TW,G}]\label{thm interface}
Suppose \(\Om^{n+1}\), \(n+1\geq 3\), is the interior of a compact \Rm manifold. Let \(\{u_{i}:\Om\ra (-1,1)\}_{i=1}^{\infty}\) be a sequence of smooth functions \st
	\begin{itemize}
		\item[(i)] \(AC_{\ep_i}(u_i)=0\) with \(\ep_i\ra 0\) as \(i \ra \infty\);
		\item[(ii)] \[\sup_{i\in \bbn}\;E_{\ep_i}(u_i)< \infty\quad \text{ and }\quad \sup_{i\in \bbn}\;\textup{Ind}(u_i)<\infty.\]
	\end{itemize}
Then there exists a stationary, integral varifold \(V\) in \(\Om\) \st possibly after passing to a subsequence, \(V[u_i]\ra V\) in the sense of varifolds. Moreover, \(\textup{spt}(V)\) is a minimal hypersurface with optimal regularity in \(\Om\). Furthermore, if \(\nm{V}\) denotes the Radon measure associated to \(V\), then
\begin{equation}\label{e.rad.measure.V}
\frac{1}{2\si}\left(\ep_i\frac{\md{\na u_i}^2}{2}+\frac{W(u_i)}{\ep_i}\right)d\textup{Vol}_{\Om}\ra \nm{V},
\end{equation}
in the sense of Radon measures.
\end{thm}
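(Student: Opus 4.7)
The plan is to assemble four ingredients from the cited works: Hutchinson-Tonegawa's \cite{HT} theorem produces a stationary integral limit varifold from the energy bound alone; a Morse-index localization confines instability to a finite set; Wickramasekera's \cite{W} regularity theorem for stable codimension-one integral varifolds (applied in the Allen-Cahn setting via \cite{TW}) delivers optimal regularity off that set; and a removable-singularity argument handles the finitely many ``bad'' points.

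First I would invoke \cite{HT}: using only the uniform energy bound, after extracting a subsequence one obtains varifold convergence \(V[u_i]\to V\) together with Radon-measure convergence of \(\tfrac{1}{2\si}e_{\ep_i}(u_i)\,d\mathrm{Vol}_\Om\) to \(\nm{V}\), which is precisely \eqref{e.rad.measure.V}. Stationarity of \(V\) follows from the divergence-free Allen-Cahn stress-energy tensor \(T_{\ep_i}=e_{\ep_i}(u_i)g-\ep_i\,du_i\otimes du_i\) (divergence-free because \(AC_{\ep_i}(u_i)=0\)) together with the energy convergence. Integrality, the deep blow-up content of \cite{HT}, moreover furnishes a uniform lower density bound on \(\mathrm{spt}\nm{V}\), which will be crucial later.

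Next I would use the index bound to extract a finite bad set, following the localization of Hiesmayr \cite{H}, Gaspar \cite{Gaspar} and Guaraco \cite{G}. Set \(K:=\sup_i\mathrm{Ind}(u_i)<\infty\) and argue by contradiction: if there existed \(K+1\) pairwise disjoint open balls \(B_1,\dots,B_{K+1}\) carrying compactly supported vector fields \(X_j\) with \(\delta^2\nm{V}(X_j)<0\), then by the standard conversion of ambient deformations into Allen-Cahn perturbations (via test functions such as \(\phi_{i,j}:=\langle\na u_i,X_j\rangle\)) and the convergence \eqref{e.rad.measure.V}, one obtains \(\langle\phi_{i,j},(-\ep_i^2\De+W''(u_i))\phi_{i,j}\rangle_{L^2}<0\) for all large \(i\), yielding \(K+1\) disjointly supported and hence linearly independent unstable directions for \(\delta^2 E_{\ep_i}(u_i)\) and contradicting \(\mathrm{Ind}(u_i)\leq K\). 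Hence there is a finite set \(\cY\subset\Om\) with \(\#\cY\leq K\) such that \(V\mres(\Om\setminus\cY)\) is stable as a codimension-one stationary integral varifold.

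Finally, on \(\Om\setminus\cY\) Wickramasekera's theorem \cite{W} (as set up for Allen-Cahn limits in \cite{TW}) gives that \(\mathrm{spt}(V)\cap(\Om\setminus\cY)\) is a smoothly embedded minimal hypersurface away from a closed singular set of Hausdorff dimension \(\leq n-7\). To absorb each isolated \(p\in\cY\), the density lower bound and monotonicity show \(V\) has locally finite mass at \(p\); tangent cones at \(p\) are stationary and, being limits of stable regular pieces, stable, hence hyperplanes (of some integer multiplicity) by Wickramasekera; Allard's theorem then either promotes \(V\) to be smooth near \(p\) or absorbs \(p\) into the singular set, which remains of Hausdorff dimension \(\leq n-7\). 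This yields optimal regularity globally. The genuinely deep inputs are the two black boxes \cite{HT} (integrality via blow-up analysis and the classification of one-dimensional entire Allen-Cahn solutions) and \cite{W} (sheeting and classification for stable codim-one integral varifolds); the main technical step original to this proof is the Morse-index localization, controlling the lift of varifold second-variation deformations to Allen-Cahn perturbations uniformly in \(i\), which I expect to be the principal obstacle.
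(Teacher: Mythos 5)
This theorem is quoted in the paper as a black box, attributed to \cite{HT,Ton,TW,G}, and no proof is given there; the only commentary is that the regularity rests on Wickramasekera's theory \cite{W}. Your sketch is essentially the correct assembly of the cited works, and in particular it matches the architecture of Guaraco's proof in \cite{G}: Hutchinson--Tonegawa for the stationary integral limit and the convergence \eqref{e.rad.measure.V}, an index-localization step producing a finite bad set \(\cY\) with \(\#\cY\leq K\), Tonegawa--Wickramasekera regularity away from \(\cY\), and a removable-singularity step for \(\cY\). Two points deserve correction, though neither is fatal. First, the localization should be run on the functions \(u_i\) themselves, not on \(V\): what \cite{TW} needs as input is that for each \(p\notin\cY\) there is a ball on which the \(u_i\) are \emph{stable critical points of} \(E_{\ep_i}\) (along a subsequence), because it is the Allen--Cahn approximation that verifies Wickramasekera's structural hypotheses (the \(\alpha\)-structural hypothesis, not just stability of the regular part of \(V\)); the argument you describe --- lifting ambient destabilizing vector fields to test functions \(\langle\na u_i,X_j\rangle\) --- is the one used for the index \emph{lower bound} of the limit (\cite{H,Gaspar}), whereas here one argues directly that a solution with \(\textup{Ind}(u_i)\leq K\) must be stable in at least one of any \(K+1\) disjoint open sets, and then extracts \(\cY\) by a covering and diagonal-subsequence argument. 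Second, your treatment of the points of \(\cY\) via ``tangent cones are multiplicity-\(m\) hyperplanes, then Allard'' does not close: Allard's theorem requires density near \(1\) and fails when \(m>1\). The standard repair (and the one in \cite{G}) is a capacity argument: since a finite set has codimension \(\geq 2\) in the \(n\)-dimensional support (as \(n\geq 2\)), logarithmic cutoffs show that the stability inequality for the regular part extends across \(\cY\), so the \cite{TW}/\cite{W} regularity conclusion holds on all of \(\Om\) at once, with no separate removable-singularity step needed. With those two adjustments your outline is the standard proof.
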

The proof of the regularity of the limit-interface depends on the regularity theory of stable, minimal hypersurfaces, developed by Wickramasekera \cite{W}. In the ambient dimension \(n+1=3\), the regularity of the limit-interface can also be obtained from the curvature estimates of Chodosh and Mantoulidis \cite{CM}.  

We also state here the theorem proved by Smith \cite{Smith} about the generic finiteness of the number of solutions to the Allen-Cahn equation on a closed manifold. This theorem will be used to prove the Morse index upper bound in Theorem \ref{t:main.thm}.

Let \(N^{n+1}\), \(n+1\geq 3\), be a closed manifold and \(\cM\) be the space of all smooth \Rm metrics on \(N\), endowed with the \(C^{\infty}\) topology. For \(\ve>0\) and \(\ga\in \cM\), we define
\begin{equation}
\cZ_{\ve,\ga}=\left\{u\in C^{\infty}(N):-\ve^2\De_{\ga}u+W'(u)=0\right\}.
\end{equation}

\begin{thm}\cite{Smith}*{Theorem 1.1 (2)}\label{t.finite}
There exists a generic set \(\widetilde{\cM}\subset \cM\) \st if \(\ga\in \widetilde{\cM}\) and \(\ve^{-1}\notin\textup{Spec}(-\De_{\ga})\) (here we are using the convention that \(\textup{Spec}(-\De_{\ga})\subset [0,\infty)\)), then \(\cZ_{\ve,\ga}\) is finite.
\end{thm}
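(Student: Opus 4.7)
The plan is to deduce the theorem from an application of the Sard–Smale transversality theorem to a parametrized Allen–Cahn operator, following the standard template for generic-nondegeneracy results in geometric PDE.

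I would fix a large integer $k$ and Hölder exponent $\al\in(0,1)$ and introduce the Banach manifold $\cM^{k,\al}$ of $C^{k,\al}$-Riemannian metrics on $N$. Consider
\[
\Ph:\cM^{k,\al}\times C^{2,\al}(N)\lra C^{0,\al}(N),\qquad \Ph(\ga,u)=-\ve^2\De_\ga u+W'(u).
\]
Elliptic regularity implies that any zero of $\Ph$ with smooth metric is automatically smooth, so for $\ga\in\cM$ the set $\cZ_{\ve,\ga}$ coincides with the $\ga$-slice of $\Ph^{-1}(0)$. For fixed $(\ga,u)$ the partial derivative
\[
D_u\Ph(\ga,u)=-\ve^2\De_\ga+W''(u):C^{2,\al}(N)\lra C^{0,\al}(N)
\]
is a self-adjoint elliptic Fredholm operator of index $0$ on the closed manifold $N$; this puts us in the setting of Sard–Smale.

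The core of the argument is the transversality step: at every zero $(\ga,u)$ of $\Ph$ I must check that the full differential $D\Ph(\ga,u)$ is surjective. Because $D_u\Ph$ is self-adjoint Fredholm, this reduces to showing that any $\ph\in\ker D_u\Ph(\ga,u)$ which is $L^2$-orthogonal to $D_\ga\Ph(\ga,u)\cdot h$ for every symmetric $(0,2)$-tensor $h$ must vanish identically. Using the classical formula
\[
\tfrac{d}{dt}\Big|_{t=0}\De_{\ga+th}\,u=-\langle h,\na^2 u\rangle_\ga+\big(\na_j h^{j}{}_{i}-\tfrac12\na_i\mathrm{tr}_\ga h\big)\na^i u,
\]
the orthogonality condition becomes a bilinear identity in $h$ and $\ph$ involving $\na u$ and $\na^2 u$. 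When $u$ is non-constant, I would pick a regular point $p$ of $u$ and choose $h$ as a coordinate-localized bump aligned with $\na u$ and $\na^2u$ near $p$, so that the identity forces $\ph(p)=0$. Unique continuation applied to the elliptic equation $(-\ve^2\De_\ga+W''(u))\ph=0$ then propagates this vanishing and gives $\ph\equiv 0$. The constant solutions $u\in\{0,\pm 1\}$ must be handled separately: for $u=\pm 1$, $W''(\pm 1)>0$ and non-negativity of $-\De_\ga$ force $\ker D_u\Ph=\{0\}$ automatically; for $u\equiv 0$ the spectral hypothesis $\ve^{-1}\notin\textup{Spec}(-\De_\ga)$ is exactly what ensures $-\ve^2\De_\ga+W''(0)$ is invertible, so transversality holds at this zero as well.

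Sard–Smale then supplies a Baire-generic subset $\widetilde{\cM}^{k,\al}\subset\cM^{k,\al}$ of regular values for the projection $\pi:\Ph^{-1}(0)\ra\cM^{k,\al}$. For $\ga\in\widetilde{\cM}^{k,\al}$, every $u\in\cZ_{\ve,\ga}$ is non-degenerate and hence isolated in $C^{2,\al}(N)$. Since solutions satisfy $|u|\leq 1$ by the maximum principle and admit uniform $C^{2,\al}$ bounds by Schauder estimates applied to $\De_\ga u=\ve^{-2}W'(u)$, the set $\cZ_{\ve,\ga}$ is precompact in $C^{2,\al}$, and combining precompactness with isolation yields finiteness. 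To pass from a $C^{k,\al}$-generic set to a $C^\infty$-generic set $\widetilde{\cM}\subset\cM$, I would intersect the sets $\widetilde{\cM}^{k,\al}\cap\cM$ over increasing $k$ via the standard Taubes-type density argument. The main obstacle is the transversality verification: producing enough metric variations to kill the cokernel pointwise requires careful use of the Laplacian-variation formula together with unique continuation, and it is precisely here that the non-constancy of $u$ is essential and why the constant solutions had to be isolated via the spectral hypothesis on $\ve^{-1}$.
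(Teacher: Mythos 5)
First, a point of reference: the paper does not prove this statement at all — it is imported verbatim from \cite{Smith}*{Theorem 1.1 (2)} — so your proposal can only be judged against the cited source, and your Sard--Smale template is indeed the standard route to results of this kind. However, as written your argument proves a strictly weaker statement because of a quantifier problem. Your map $\Ph(\ga,u)=-\ve^2\De_\ga u+W'(u)$ is built with $\ve$ \emph{fixed}, so the regular-value set that Sard--Smale produces is a generic set $\widetilde{\cM}_{\ve}$ depending on $\ve$; the theorem asserts a single generic $\widetilde{\cM}$ working simultaneously for \emph{every} $\ve$ with $\ve^{-1}\notin\textup{Spec}(-\De_\ga)$, which is an uncountable family, so you cannot recover the stated quantifier order by intersecting your sets over countably many values of $\ve$. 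Repairing this requires promoting $\ve$ to a parameter, i.e.\ working with the universal moduli space of triples $(\ve,\ga,u)$: for generic $\ga$ the slice $\{(\ve,u)\}$ is then a one-dimensional manifold, and finiteness of each individual $\ve$-fiber needs an additional (second-order, jet-transversality) statement making the projection to $\ve$ a proper Morse function on that one-manifold, together with the spectral hypothesis to control bifurcation off the constant branch $u\equiv 0$. That is a genuinely extra layer beyond the single application of Sard--Smale you describe, and it is the main missing idea.

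Two smaller issues. Even for fixed $\ve$, your assertion that ``transversality holds at the zero $u\equiv 0$ as well'' is not available inside the Sard--Smale step: there $\ga$ ranges over \emph{all} metrics, including those with $\ve^{-1}\in\textup{Spec}(-\De_\ga)$, and at such a point $D_\ga\Ph(\ga,0)=0$ (metric variations do not move the Laplacian of a constant), so $D\Ph$ is not surjective and $\Ph^{-1}(0)$ fails to be a Banach manifold there. You must excise the constant solutions before applying Sard--Smale and only afterwards use the spectral hypothesis to see that $0,\pm1$ are isolated points of $\cZ_{\ve,\ga}$. Relatedly, invertibility of $-\ve^2\De_\ga+W''(0)$ is equivalent to $-W''(0)\,\ve^{-2}\notin\textup{Spec}(-\De_\ga)$, which matches the stated condition $\ve^{-1}\notin\textup{Spec}(-\De_\ga)$ only under a particular normalization of $W$ or of the spectrum, so this identification should not be called ``exact'' without checking the cited paper's conventions. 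The remaining ingredients — the surjectivity computation at non-constant $u$ via localized metric variations plus unique continuation, the a priori bound $|u|\le 1$, compactness plus isolation giving finiteness, and the passage from $C^{k,\al}$ to $C^\infty$ genericity — are standard and correctly sketched.
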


\subsection{Min-max theorem on the Hilbert space}
Let $\sH$ be a separable Hilbert space and \(\cE:\sH\ra \bbr\) be a \(C^2\) functional. \sps \(B_0,B_1\) are closed subsets of \(\sH\). We define
\begin{equation}\label{def.sF}
\sF=\left\{\ze:[0,1]\ra \sH: \ze\text{ is continuous, } \ze(0)\in B_0,\;\ze(1)\in B_1 \right\}
\end{equation}
and 
\begin{equation}
c=\inf_{\ze\in\sF}\sup_{t\in [0,1]}\cE(\ze(t)).
\end{equation}
A sequence \(\{\ze_i\}_{i=1}^{\infty}\subset \sF\) is called a \textit{minimizing \seq} if
\[\lim_{i\ra\infty}\sup_{t\in [0,1]} \cE(\ze_i(t))=c.\]
For a minimizing sequence \(\{\ze_i\}\subset \sF\), let \(\cK\left(\{\ze_i\}\right)\) denote the set of all \(v\in \sH\) for which \te sequences \(\{i_j\}\subset \{i\}\) and \(\{t_i\}\subset [0,1]\) \st
\[v=\lim_{j\ra \infty}\ze_{i_j}(t_j).\]
\begin{defn} 
Given a minimizing \seq \(\{\ze_i\}\) in \(\sF\), we say that \(\cE\) satisfies the \textit{Palais-Smale condition} along \(\{\ze_i\}\) if every sequence \(\{v_i\}\), satisfying the conditions
\[\lim_{i\ra\infty}\cE'(v_i)=0\quad \text{ and }\quad \lim_{i\ra\infty}d\left(v_i,\ze_i\left([0,1]\right)\right)=0,\]
has a convergent subsequence.
\end{defn}

\begin{defn}\cite{Gho}*{Section 3, page 53}
Let
\[K_c=\left\{v\in \sH:\cE'(v)=0,\;\cE(v)=c\right\}.\]
A compact subset \(\sC \text{ of } K_c\) is called an \textit{isolated critical set} for \(\cE\) in \(K_c\) if \tes an open set \(\sU\subset \sH\) \st \(\sC\subset \sU\) and
\[K_c\cap \sU = \sC.\]
\end{defn}

The following min-max theorem, which was proved by Ghoussoub \cite{Gho} in a much more general setting, will be used to prove Theorem \ref{t:main.thm}.

\begin{thm}\cite{Gho}\label{t.min.max}
(a) Let \(L\subset \sH\) be a closed set \st the following conditions are satisfied:
\begin{itemize}
	\item[(a1)] \(L\cap (B_0\cup B_1)=\emptyset\);
	\item[(a2)] for all \(\ze\in \sF\), \(L\cap \ze\left([0,1]\right)\neq \emptyset\);
	\item[(a3)]\(\inf\limits_{v\in L}\cE(v)\geq c.\)
\end{itemize}
Suppose \(\cE\) satisfies the Palais-Smale condition along a minimizing \seq \(\{\ze_i\}_{i=1}^{\infty}\). Then 
\begin{equation}
K_c\cap L \cap \cK\left(\{\ze_i\}\right)\neq \emptyset.
\end{equation}

(b) In addition to the assumptions stated in part (a), let us also assume that:
\begin{itemize}
	\item[(b1)] \(K_c\cap L\) is an isolated critical set for \(\cE\) in \(K_c\);
	\item[(b2)] \(\cE''\) is Fredholm on \(K_c\).
\end{itemize}
Then there exists
\begin{equation}
v\in K_c\cap L \cap \cK\left(\{\ze_i\}\right) \text{ such that } m(v)\leq 1,
\end{equation}
where \(m(v)\) is the Morse index of the critical point \(v\), i.e. \(m(v)\) is equal to the index of the bilinear form \(\cE''\big|_v\).
\end{thm}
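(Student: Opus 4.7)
The plan follows the quantitative deformation scheme developed by Ghoussoub in his duality framework for critical point theory.

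\textbf{Part (a).} I argue by contradiction: suppose $K_c \cap L \cap \cK(\{\ze_i\}) = \emptyset$. The first step is a uniform lower bound on $\cE'$ near $L$ at level close to $c$. Concretely, I claim there exist $\de,\rh>0$ and $i_0\in\bbn$ such that $\|\cE'(v)\| \geq \de$ for every $v$ in
\[
A = \bigcup_{i \geq i_0}\bigl\{v \in \sH : d(v,L) < \rh,\;d(v,\ze_i([0,1])) < \rh,\;|\cE(v) - c| < \rh\bigr\}.
\]
If the claim failed, a diagonal extraction (along a subsequence of $\{\ze_i\}$, which remains minimizing) would produce $v_i$ with $d(v_i,L)\to 0$, $d(v_i,\ze_i([0,1]))\to 0$, $\cE(v_i)\to c$, $\cE'(v_i)\to 0$. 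The Palais--Smale assumption along $\{\ze_i\}$ yields a convergent subsequence whose limit lies in $L$ (closed), in $K_c$, and in $\cK(\{\ze_i\})$, contradicting our standing assumption. Armed with the lower bound, the second step is a standard pseudo-gradient construction: on an open shell around $L$ at level $c$ I build a locally Lipschitz vector field $X$ with $\|X\|\leq 1$ and $\langle \cE'(v), X(v)\rangle \leq -\de^2/2$, cut off to vanish outside a slightly larger shell. By (a1), the cut-off region is disjoint from $B_0\cup B_1$, so the time-$s$ flow $\Ph_s$ of $X$ preserves endpoints of any path in $\sF$. For $i$ sufficiently large, define $\tilde\ze_i = \Ph_{s_0}\circ \ze_i$ with $s_0$ chosen so that $\cE$ drops by more than $\rh/4$ at every point of $\tilde\ze_i$ still close to $L$; because (a2) forces $\tilde\ze_i$ to intersect $L$, the max of $\cE$ along $\tilde\ze_i$ is $< c - \rh/8$. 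This contradicts the definition of $c$.

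\textbf{Part (b).} Set $S = K_c \cap L$; by (b1) it is a compact, isolated critical set. The Fredholm condition (b2) makes the Morse index of each $v \in S$ finite and upper semicontinuous, so $S$ splits into a closed subset $S_{\geq 2}$ (points of index $\geq 2$) and its complement $S_{\leq 1}$. Suppose for contradiction that no $v \in S_{\leq 1}$ lies in $\cK(\{\ze_i\})$. Then by compactness I can pick an open neighborhood $\cV \supset S_{\geq 2}$ with $\overline{\cV}\cap S_{\leq 1}=\emptyset$ and with $\overline{\cV}$ contained in the isolating open set from (b1). Around each $v_0\in S_{\geq 2}$ the Gromoll--Meyer splitting lemma (valid in Hilbert space because $\cE''(v_0)$ is Fredholm self-adjoint) furnishes local coordinates in which $\cE - c$ is the sum of a non-degenerate quadratic form with at least two negative directions and a smooth function on the finite-dimensional kernel. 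The local unstable set thus has dimension $\geq 2$, and a generic $C^0$-small perturbation of a one-parameter path moves it off that set; a local gradient-like flow then lowers the path's energy through $v_0$. Patching these local modifications together via a partition of unity and combining with the pseudo-gradient deformation of part (a) outside $\cV$ (using (a1) once more to preserve endpoints) produces a deformation of each $\ze_i$ whose sup-energy is eventually below $c$, yielding the final contradiction and forcing some $v$ with $m(v)\leq 1$ to lie in $K_c \cap L \cap \cK(\{\ze_i\})$.

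\textbf{Main obstacle.} The central difficulty is the local deformation in part (b): near an index-$\geq 2$ critical point one must perturb a one-parameter curve off the local unstable set and then lower its energy, while matching continuously with the global pseudo-gradient deformation outside. The Fredholm assumption (b2) is exactly what enables the Gromoll--Meyer normal form and thereby legitimizes the ``a $1$-parameter family can avoid a set of codimension $\geq 2$'' dimension count in the relevant finite-dimensional reduction. A secondary bookkeeping point is that after each deformation one needs $\tilde\ze_i$ to still intersect $L$ so that (a2)--(a3) remain applicable; this is arranged by taking the deformation supports away from the region where the perturbed path already lies below level $c - \rh/8$.
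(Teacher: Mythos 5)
The paper does not actually prove Theorem \ref{t.min.max}: it is quoted from Ghoussoub \cite{Gho} (Theorems (1.bis) and (4) there), and the only original content is the remark that allowing $B_0,B_1$ to be closed sets rather than singletons does not affect Ghoussoub's argument. Measured against that, your part (a) is a recognizable sketch of the standard dual-set deformation argument, but the final contradiction is misstated: the flow only lowers $\cE$ on the portion of the path near $L$, so $\sup_t\cE(\tilde\ze_i(t))$ need not drop below $c$ and there is no contradiction with the definition of $c$. The correct contradiction is with (a3): the deformed path still meets $L$ at some $p=\Ph_{s_0}(q)$ with $q\in\ze_i([0,1])$; since $\cE$ is non-increasing along the flow and $\cE(p)\geq c$, for $s_0<\rh$ and $i$ large the entire trajectory from $q$ to $p$ stays in the set where $|\cE-c|<\rh$, $d(\cdot,L)<\rh$ and $d(\cdot,\ze_i([0,1]))<\rh$, whence $\cE(p)\leq\cE(q)-s_0\de^2/2<c$, contradicting $\inf_L\cE\geq c$. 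That repair is routine.

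Part (b) has a genuine gap. The Morse index of a continuously varying family of self-adjoint Fredholm forms is \emph{lower} semicontinuous, not upper semicontinuous: strictly negative eigenvalues persist under small perturbations, while additional negative directions can emerge from the kernel at a degenerate point. Hence $\{v\in S:m(v)\le 1\}$ is the closed part and $\{v\in S:m(v)\ge 2\}$ is relatively open; a sequence of index-$2$ critical points can converge to a degenerate critical point of index $1$. Consequently the separating neighborhood $\cV\supset S_{\ge 2}$ with $\overline{\cV}\cap S_{\le 1}=\emptyset$ on which your entire dichotomy rests need not exist. Beyond that, the local step --- perturbing a one-parameter path off the unstable set of a possibly degenerate index-$\ge 2$ critical point, lowering its energy there, and patching with the global deformation while keeping the path in $\sF$, keeping it minimizing, and keeping it dual to $L$ --- is precisely the content of Ghoussoub's Morse-index theorem and is asserted rather than carried out; you correctly identify it as the main obstacle, but identifying an obstacle is not the same as overcoming it. As written, part (b) is a plan rather than a proof, and the appropriate move here is the one the paper makes: invoke \cite{Gho} directly and verify only that the generalization to closed $B_0,B_1$ is harmless.
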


\begin{rmk}
In the definition of \(\sF\) in \eqref{def.sF} and in Theorem \ref{t.min.max}, we have assumed that \(B_0\) and \(B_1\) are closed subsets of \(\sH\). This is slightly different from the hypothesis made in \cite{Gho}*{Theorem (1.bis) and Theorem (4)}, where \(B_0\) and \(B_1\) are assumed to be singleton sets. However this does not affect the proof of Theorem \ref{t.min.max} in \cite{Gho} for the following reason (see \cite{Gho}*{Remark (3) in page 32 and Remark (11) in page 60}). If \(\ze\in \sF\) (as defined in \eqref{def.sF}) and \(\ze':[0,1]\ra \sH\) is another map satisfying \(\ze'(0)=\ze(0)\) and \(\ze'(1)=\ze(1)\), then \(\ze'\in \sF\) as well.
\end{rmk}

\subsection{The notion of the good set}\label{s.2.2}
In this subsection we will recall the definition of the \textit{good set} from \cite{CL}*{Section 2.2}. Let \(N\) be a complete \Rm \mf and \(\Om\subset N\) be a bounded open set with smooth boundary \(\del\Om\). \(\mathbf{I}_{n+1}(\Om;\bbz_2)\) denotes the space of \((n+1)\)-dimensional mod \(2\) flat chains in \(\Om\); \(\cZ_{n,\text{rel}}(\Om,\del\Om;\bbz_2)\) denotes the space of \(n\)-dimensional mod \(2\) relative flat cycles in \(\Om\) and \(\del:\mathbf{I}_{n+1}(\Om;\bbz_2)\ra \cZ_{n,\text{rel}}(\Om,\del\Om;\bbz_2)\) is the boundary map. Both the spaces \(\mathbf{I}_{n+1}(\Om;\bbz_2)\) and \(\cZ_{n,\text{rel}}(\Om,\del\Om;\bbz_2)\) are assumed to be equipped with the flat topology. (We refer to \cite{LMN}*{Section 2} and \cite{LZ}*{Section 3} for more details about these spaces.) Let \(\sS\) be the set of all \cts maps \(\Ga:[0,1]\ra \mathbf{I}_{n+1}(\Om;\bbz_2)\) \st \(\Ga(0)=\emptyset\) and \(\Ga(1)=\Om\). The \textit{(relative) width of \(\Om\)}, denoted by \(\mathbb{W}(\Om)\), is defined as follows \cite{Gro, Guth, LMN, CL, LZ}.
\begin{equation}\label{e.def.rel.width}
\mathbb{W}(\Om)=\inf_{\Ga\in \sS}\sup_{t\in [0,1]}\mathbf{M}(\del\Ga(t)).
\end{equation}
$\Om$ is called a \textit{good set} if
\begin{equation}\label{good.set}
\mathbb{W}(\Om)> 4\cH^n(\del \Om).
\end{equation}
\section{Nested families in the Sobolev space \(H^1(N)\)}
The notion of the nested family of open sets played an important role in the proof of the main theorem of Chambers-Liokumovich in \cite{CL}. In this section, we will deal with the notion of the nested family in the function space \(H^1(N)\). Throughout this section, \(N\) will be assumed to be a closed \Rm manifold (of dimension \(n+1\)) and \(\ve>0\). We begin with the following lemma.
\begin{lem}\label{l.minimization.prob}
Let \(u,v\in H^1(N)\) \st \(v\geq u\) and \(|u|,|v|\leq 1\). \sps 
\[\cS=\{w\in H^1(N):v\geq w\geq u\}.\]
Then \tes \(w^* \in \cS\) \st
\[E_{\ve}(w^*)=\inf\left\{E_{\ve}(w):w\in \cS\right\}.\]
\end{lem}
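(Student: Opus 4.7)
The proof is a standard direct-method argument in the calculus of variations. The plan is to take a minimizing sequence in $\cS$, extract a subsequence converging weakly in $H^1(N)$ and pointwise a.e., verify that the limit still lies in $\cS$, and finish by lower semicontinuity of $E_\ve$ under these convergences.

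First, I would note that $E_\ve$ is bounded below by $0$ on $\cS$ (both summands are nonnegative), so $m:=\inf_{w\in\cS} E_\ve(w)\in[0,\infty)$ is well-defined. Moreover $\cS\neq\emptyset$ since $u\in\cS$, and $E_\ve(u)<\infty$ because $|u|\leq 1$ implies $W(u)$ is bounded and $N$ is closed, while $\na u\in L^2(N)$ by hypothesis. Hence $m<\infty$.

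Next, choose a minimizing sequence $\{w_k\}\subset\cS$. From $|w_k|\le 1$ and $\vol(N)<\infty$ we obtain $\|w_k\|_{L^2}\leq \vol(N)^{1/2}$, while the energy bound $E_\ve(w_k)\leq m+1$ gives a uniform $L^2$ bound on $\na w_k$. Therefore $\{w_k\}$ is bounded in $H^1(N)$, and by the Rellich–Kondrachov theorem, after passing to a subsequence, I can assume $w_k\rightharpoonup w^*$ weakly in $H^1(N)$, $w_k\to w^*$ strongly in $L^2(N)$, and (passing to a further subsequence) $w_k\to w^*$ a.e.\ on $N$. The pointwise a.e.\ convergence preserves the inequalities $u\leq w_k\leq v$, so $u\leq w^*\leq v$ a.e., and in particular $w^*\in\cS$.

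Finally, I verify $E_\ve(w^*)\leq m$, which forces equality. The Dirichlet part is handled by the standard weak lower semicontinuity of $\int_N |\na w|^2$ on $H^1(N)$. For the potential part, $|w_k|\leq 1$ together with the pointwise a.e.\ convergence and boundedness of $W$ on $[-1,1]$ lets me apply the dominated convergence theorem to conclude $\int_N W(w_k)\to \int_N W(w^*)$. Combining,
\begin{equation}
E_\ve(w^*)\leq \liminf_{k\to\infty} E_\ve(w_k)=m,
\end{equation}
so $w^*$ achieves the infimum. There is no real obstacle here; the only point that requires a brief comment is that the pointwise constraint $u\leq w\leq v$ defines a closed, convex subset of $H^1(N)$, which is what lets the a.e.\ limit inherit the constraint — and this is precisely why the direct method succeeds in this obstacle-type problem.
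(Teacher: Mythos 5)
Your argument is correct and is essentially the same as the paper's: both take a minimizing sequence in $\cS$, use the uniform $L^\infty$ bound together with the energy bound to get $H^1$-boundedness, extract a subsequence converging strongly in $L^2$, pointwise a.e.\ and with weakly convergent gradients, note the a.e.\ limit stays in $\cS$, and conclude by lower semicontinuity of the Dirichlet term plus convergence of the potential term. No discrepancies worth noting.
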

\begin{proof}
	Let
\[\al=\inf\left\{E_{\ve}(w):w\in \cS\right\}\]
and \(\{w_{i}\}_{i=1}^{\infty}\subset \cS\) be \st 
\begin{equation}\label{e.minimizing.seq}
\int_N\ve\frac{|\na w_i|^2}{2}+\frac{W(w_i)}{\ve}\leq \al+\frac{1}{i}.
\end{equation}
Since \(|u|,|v|\leq 1\),
\begin{equation}\label{e.2}
|w_i|\leq 1\quad\forall\;i\in\bbn.
\end{equation}
\eqref{e.minimizing.seq} and \eqref{e.2} imply that \(\{w_i\}_{i=1}^{\infty}\) is a bounded \seq in \(H^1(N)\). \tf by Rellich's compactness theorem, \te \(w^*\in H^1(N)\) and a sub\seq \(\{w_{i_k}\}_{k=1}^{\infty}\) \st 
\begin{equation}\label{e.3}
w_{i_k}\ra w^*\text{ strongly in }L^2(N)\text{ and pointwise a.e.}
\end{equation}
and
\begin{equation}\label{e.4}
\na w_{i_k} \ra \na w^*\text{ weakly in }L^2(N).
\end{equation}
\eqref{e.3} implies that \(w^*\in\cS\). \eqref{e.3}, \eqref{e.4} and \eqref{e.minimizing.seq} together imply that \(E_{\ve}(w^*)\leq \al\) and hence \(E_{\ve}(w^*)= \al\) (as \(w^*\in\cS\)).
\end{proof}
\begin{defn}
	A \cts map $u:[a,b]\ra H^1(N)$ is called \textit{nested} if \(u(t)\geq u(s)\) whenever \(t\leq s\).
\end{defn}
\subsection{Truncation and concatenation of the nested maps}
The following Lemmas \ref{l.trancation} and \ref{l.concatenation} are the Allen-Cahn counterparts of \cite{CL}*{Lemma 5.1} and \cite{CL}*{Proposition 6.3}, respectively.
\begin{lem}\label{l.trancation}
(a) Let \(u:[0,1]\ra H^1(N)\) be nested. \sps \(v\in H^1(N)\) has the following properties: 
\begin{itemize}
	\item \(v\geq u(1)\);
	\item for any \(v'\in H^1(N)\) with \(v\geq v'\geq u(1)\), we have \(E_{\ve}(v)\leq E_{\ve}(v')\).
\end{itemize}
Then \tes \(\tilde{u}:[0,1]\ra H^1(N)\) \st
        \begin{itemize}
			\item[(i)] \(\tilde{u}\) is nested;
			\item[(ii)] \(\tilde{u}(0)\geq u(0)\) and \(\tilde{u}(1)=v\); moreover if \(u(0)\geq v\), one can choose \(\tilde{u}(0)= u(0)\);
			\item[(iii)] \(E_{\ve}(\tilde{u}(t))\leq E_{\ve}(u(t))\) \fa \(t\in [0,1]\);
			\item[(iv)] if \(\nm{v}_{L^{\infty}(N)}\leq 1\) and \(\sup\limits_{t\in [0,1]}\nm{u(t)}_{L^{\infty}(N)}\leq 1\), then \(\sup\limits_{t\in [0,1]}\nm{\tilde{u}(t)}_{L^{\infty}(N)}\leq 1\) as well.
		\end{itemize}
\medskip
(b) Let \(u:[0,1]\ra H^1(N)\) be nested. \sps \(v\in H^1(N)\) has the following properties:
\begin{itemize}
	\item \(u(0)\geq v\);
	\item for any \(v'\in H^1(N)\) with \(u(0)\geq v'\geq v\), we have \(E_{\ve}(v)\leq E_{\ve}(v')\).
\end{itemize}
Then \tes \(\tilde{u}:[0,1]\ra H^1(N)\) \st
\begin{itemize}
	\item[(i)] \(\tilde{u}\) is nested;
	\item[(ii)] \(\tilde{u}(0)=v\) and \(\tilde{u}(1)\leq u(1)\); moreover if \(u(1)\leq v\), one can choose \(\tilde{u}(1)=u(1)\);
	\item[(iii)] \(E_{\ve}(\tilde{u}(t))\leq E_{\ve}(u(t))\) \fa \(t\in [0,1]\);
	\item[(iv)] if \(\nm{v}_{L^{\infty}(N)}\leq 1\) and \(\sup\limits_{t\in [0,1]}\nm{u(t)}_{L^{\infty}(N)}\leq 1\), then \(\sup\limits_{t\in [0,1]}\nm{\tilde{u}(t)}_{L^{\infty}(N)}\leq 1\) as well.
\end{itemize}
\end{lem}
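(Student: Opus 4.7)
The plan is to prove part (a) by setting $\tilde{u}(t) = \max(u(t), v)$, and part (b) symmetrically by $\tilde{u}(t) = \min(u(t), v)$; I describe (a) in detail since (b) is completely analogous after swapping $\max \leftrightarrow \min$ and reversing inequalities. Well-definedness and continuity of $\tilde{u}$ as a map into $H^1(N)$ follow from the identity $\max(a,b) = b + (a-b)^+$ together with the standard fact that $f \mapsto f^+$ is a Lipschitz map on $H^1(N)$. Properties (ii) and (iv) are immediate from the definition: $\tilde{u}(1) = \max(u(1), v) = v$ since $v \geq u(1)$; $\tilde{u}(0) = \max(u(0), v) \geq u(0)$, with equality when $u(0) \geq v$; and if $u(t), v \in [-1,1]$ then so does $\max(u(t), v)$. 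Nestedness (i) holds because for $t \leq s$ we have $u(t) \geq u(s)$ pointwise a.e., hence $\max(u(t), v) \geq \max(u(s), v)$ pointwise a.e.

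The main step is the energy bound (iii). The key ingredient is the pointwise lattice identity
\begin{equation}
\ve\frac{|\na \max(f,g)|^2}{2}+\ve\frac{|\na \min(f,g)|^2}{2}+\frac{W(\max(f,g))+W(\min(f,g))}{\ve} = e_{\ve}(f)+e_{\ve}(g)
\end{equation}
for $f,g\in H^1(N)$. The potential part is immediate from $\{\max(f,g),\min(f,g)\} = \{f,g\}$ pointwise. The gradient part follows from the Stampacchia-type identities $\na \max(f,g) = \na f\cdot \chi_{\{f\geq g\}}+\na g\cdot \chi_{\{f<g\}}$ and $\na\min(f,g) = \na g\cdot \chi_{\{f\geq g\}}+\na f\cdot \chi_{\{f<g\}}$ a.e., so that $|\na\max|^2 + |\na\min|^2 = |\na f|^2 + |\na g|^2$ a.e. Integrating over $N$ yields
\begin{equation}
E_{\ve}(\max(f,g)) + E_{\ve}(\min(f,g)) = E_{\ve}(f) + E_{\ve}(g).
\end{equation}

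Applying this with $f = u(t)$ and $g = v$, set $w' = \min(u(t), v)$. Nestedness of $u$ gives $u(t) \geq u(1)$, and by hypothesis $v \geq u(1)$, so $w' \geq u(1)$; also $w' \leq v$. Hence $w'$ is an admissible competitor in the minimization problem that $v$ solves, which gives $E_{\ve}(v) \leq E_{\ve}(w')$. Substituting into the identity:
\begin{equation}
E_{\ve}(\tilde{u}(t)) = E_{\ve}(u(t)) + E_{\ve}(v) - E_{\ve}(w') \leq E_{\ve}(u(t)),
\end{equation}
which proves (iii). I expect the only non-routine point to be justifying the pointwise gradient identity for the lattice operations in $H^1(N)$; this is classical (see e.g. Gilbarg--Trudinger, Lemma 7.6, which transfers to the manifold setting by working in local charts), so the argument is essentially immediate once that identity is in hand. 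Part (b) proceeds identically, using $\tilde{u}(t) = \min(u(t), v)$ and the competitor $\max(u(t), v)$ in the minimization defining $v$ from below.
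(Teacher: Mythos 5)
Your proposal is correct and follows essentially the same route as the paper: define $\tilde{u}(t)=\max\{u(t),v\}$ (resp. $\min$ for part (b)), use $\min\{u(t),v\}$ as the admissible competitor in the minimizing property of $v$ (noting $v\geq\min\{u(t),v\}\geq u(1)$ by nestedness), and deduce the energy bound; your lattice identity $E_{\ve}(\max\{f,g\})+E_{\ve}(\min\{f,g\})=E_{\ve}(f)+E_{\ve}(g)$ is just a repackaging of the paper's decomposition of the energies over the sets $\{u(t)\geq v\}$ and $\{u(t)<v\}$. The only cosmetic caveat is your aside that $f\mapsto f^{+}$ is Lipschitz on $H^1(N)$ — continuity (which is all that is needed, and is standard) is the safe claim — but this does not affect the argument.
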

\begin{proof}
To prove part (a), we define
\begin{equation*}\label{e.def.tilde.u}
\tilde{u}(t)=\max\{u(t),v\}.
\end{equation*}
Items (i) and (ii) follow from the assumptions that \(u\) is nested and \(v\geq u(1)\), respectively. Item (iv) follows from the definition of $\tilde{u}$. To prove (iii), we consider 
\begin{equation*}
u'(t)= \min\{u(t),v\}.
\end{equation*}
Then \(v\geq u'(t)\geq u(1)\) \fa \(t \in [0,1]\). By our hypothesis,
\[E_{\ve}(v)\leq E_{\ve}(u'(t))=E_{\ve}(u(t),\{u(t)<v\})+E_{\ve}(v,\{u(t)\geq v\}).\]
Hence
\begin{equation}\label{e.q1}
\Ee{v,\{u(t)<v\}}\leq \Ee{u(t),\{u(t)<v\}}.
\end{equation}
\tf using \eqref{e.q1},
\begin{equation*}
\Ee{\tilde{u}(t)}=\Ee{u(t),\{u(t)\geq v\}} + \Ee{v, \{u(t)<v\}}\leq \Ee{u(t)}.
\end{equation*}

Part (b) can also be proved in a similar way by defining 
\[\tilde{u}(t)=\min\{u(t),v\}\]
and using the fact that
\[u(0)\geq \max\{u(t),v\}\geq v.\]
\end{proof}

\begin{lem}\label{l.concatenation} Let \(u_1,\;u_2:[0,1]\ra H^1(N)\) be \st
	\begin{itemize}
		\item \(u_1,\;u_2\) are nested;
		\item \(\sup\limits_{t\in [0,1]}\nm{u_i(t)}_{L^{\infty}(N)}\leq 1\) for $i=1,2$;
		\item \(u_2(0)\geq u_1(1)\);
		\item \(\underset{t\in [0,1]}{\sup} E_{\ve}(u_i(t))\leq A\) for \(i=1,2\).
	\end{itemize}
Then \tes \(\tilde{u}:[0,1]\ra H^1(N)\) \st
\begin{itemize}
	\item[(i)] \(\tilde{u}\) is nested;
	\item[(ii)] \(\sup\limits_{t\in [0,1]}\nm{\tilde{u}(t)}_{L^{\infty}(N)}\leq 1\);
	\item[(iii)] \(\tilde{u}(0)\geq u_1(0)\) and \(\tilde{u}(1)\leq u_2(1)\);
	\item[(iv)] \(\sup\limits_{t\in [0,1]}\Ee{\tilde{u}(t)}\leq A\).
\end{itemize}
\end{lem}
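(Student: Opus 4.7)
The plan is to produce $\tilde{u}$ by constructing an intermediate ``seam'' function $v$ sitting between $u_1(1)$ and $u_2(0)$ that minimizes $E_\ve$ in the order-interval they bound, then using Lemma \ref{l.trancation} to push the end of $u_1$ down to $v$ and the start of $u_2$ down from $v$, and finally concatenating the two resulting nested paths. A naive concatenation fails because $u_2(0) \geq u_1(1)$ means the values would jump \emph{upward} at the seam, violating nestedness; introducing $v$ as a common endpoint is what repairs this.

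First, apply Lemma \ref{l.minimization.prob} to the pair $u_1(1) \leq u_2(0)$ (which have $L^\infty$ norm $\leq 1$) to obtain $v \in H^1(N)$ with $u_2(0) \geq v \geq u_1(1)$ and
\[ E_\ve(v) = \inf\{ E_\ve(w) : u_2(0) \geq w \geq u_1(1) \}; \]
the sandwich $u_1(1) \leq v \leq u_2(0)$ forces $|v| \leq 1$. Next, verify the hypotheses of Lemma \ref{l.trancation}(a) applied to $u_1$ with this $v$: any $v' \in H^1(N)$ satisfying $v \geq v' \geq u_1(1)$ also satisfies $u_2(0) \geq v'$ (since $v \leq u_2(0)$), so it lies in the constraint set and thus $E_\ve(v) \leq E_\ve(v')$ by the minimizing property. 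Lemma \ref{l.trancation}(a) then yields a nested $\tilde{u}_1 \colon [0,1] \to H^1(N)$ with $\tilde{u}_1(0) \geq u_1(0)$, $\tilde{u}_1(1) = v$, $\|\tilde{u}_1(t)\|_{L^\infty} \leq 1$, and $E_\ve(\tilde{u}_1(t)) \leq E_\ve(u_1(t)) \leq A$. Symmetrically, Lemma \ref{l.trancation}(b) applies to $u_2$ with $v$: any $v'$ with $u_2(0) \geq v' \geq v$ also has $v' \geq u_1(1)$, so the same minimization argument works. This produces a nested $\tilde{u}_2$ with $\tilde{u}_2(0) = v$, $\tilde{u}_2(1) \leq u_2(1)$, $\|\tilde{u}_2(t)\|_{L^\infty} \leq 1$, and $E_\ve(\tilde{u}_2(t)) \leq A$.

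Finally, define
\[ \tilde{u}(t) = \begin{cases} \tilde{u}_1(2t), & t \in [0, \tfrac{1}{2}], \\ \tilde{u}_2(2t-1), & t \in [\tfrac{1}{2}, 1]. \end{cases} \]
Continuity at $t = \tfrac{1}{2}$ follows from $\tilde{u}_1(1) = v = \tilde{u}_2(0)$. Nestedness within each half is inherited; across the seam it reduces to $\tilde{u}_1(2s) \geq v \geq \tilde{u}_2(2s'-1)$ for $s \leq \tfrac{1}{2} \leq s'$, which holds because $\tilde{u}_1(2s) \geq \tilde{u}_1(1) = v$ and $\tilde{u}_2(2s'-1) \leq \tilde{u}_2(0) = v$. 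The $L^\infty$ bound, the boundary comparisons (iii), and the energy bound (iv) transfer directly from the two pieces. The only subtle step is the choice of $v$: neither $u_1(1)$ nor $u_2(0)$ on its own satisfies the minimization hypothesis of Lemma \ref{l.trancation} on both sides of the seam, but an $E_\ve$-minimizer in the order-interval between them satisfies the hypothesis simultaneously for both $u_1$ and $u_2$, which is exactly what makes the concatenation work.
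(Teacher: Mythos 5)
Your proof is correct and follows essentially the same route as the paper: minimize $E_\ve$ over the order-interval $\{w : u_2(0)\geq w\geq u_1(1)\}$ via Lemma \ref{l.minimization.prob}, feed the minimizer into Lemma \ref{l.trancation}(a) for $u_1$ and Lemma \ref{l.trancation}(b) for $u_2$, and concatenate at the common endpoint. The verifications you add (that the minimizer satisfies both one-sided minimality hypotheses, and nestedness across the seam) are exactly the ones the paper leaves implicit.
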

\begin{proof}
Let
\[\cS=\{v\in \HN : u_2(0)\geq v\geq u_1(1)\}.\]
By Lemma \ref{l.minimization.prob}, \tes \(v^*\in \cS\) \st
\[E_{\ve}(v^*)=\inf\{\Ee{v}:v\in \cS\}.\]
We note that \(v^*\geq v'\geq u_1(1)\) implies that \(v'\in \cS\) and hence \(\Ee{v^*}\leq \Ee{v'}\). \tf by Lemma \ref{l.trancation}, part (a), \tes a nested map \(\tilde{u}_1:[0,1]\ra \HN\) \st 
\[\tilde{u}_1(0)\geq u_1(0),\;\tilde{u}_1(1)=v^*,\; \sup\limits_{t\in [0,1]}\nm{\tilde{u}_1(t)}_{L^{\infty}(N)}\leq 1\; \text{ and }\; \sup\limits_{t\in [0,1]}\Ee{\tilde{u}_1(t)}\leq A.\]
Similarly, \(u_2(0)\geq v'\geq v^* \) implies that \(v'\in \cS\) and hence \(\Ee{v^*}\leq \Ee{v'}\). \tf by Lemma \ref{l.trancation}, part (b), \tes a nested map \(\tilde{u}_2:[0,1]\ra \HN\) \st 
\[\tilde{u}_2(0)=v^*,\; \tilde{u}_2(1)\leq u_2(1),\; \sup\limits_{t\in [0,1]}\nm{\tilde{u}_2(t)}_{L^{\infty}(N)}\leq 1\; \text{ and } \sup\limits_{t\in [0,1]}\Ee{\tilde{u}_2(t)}\leq A.\]
Finally, we define \(\tilde{u}:[0,1]\ra H^1(N)\) by
\begin{equation*}
\tilde{u}(t)=\begin{cases}
\tilde{u}_1(2t) & \text{ if } t\in [0,1/2];\\
\tilde{u}_2(2t-1) & \text{ if } t \in [1/2, 1].
\end{cases}
\end{equation*}
\end{proof}
\subsection{Approximation by nested maps}
In \cite{CL}*{Proposition 6.1}, Chambers and Liokumovich proved that if \(\{\Om_t\}_{t\in [0,1]}\) is a family of open sets and \(\ka>0\), \tes a nested family of open sets \(\{\tilde{\Om}_t\}_{t\in [0,1]}\) \st \(\tilde{\Om}_0\subset \Om_0\), \(\tilde{\Om}_1 \supset \Om_1\) and
\[\sup_{t\in [0,1]}\cH^n(\del\tilde{\Om}_t)\leq \sup_{t\in [0,1]}\cH^n(\del\Om_t)+\ka.\]
The following Proposition \ref{p.approx.by.nested} is the Allen-Cahn analogue of this theorem.
\begin{pro}\label{p.approx.by.nested}
Let \(\ph:[0,1]\ra \HN\) be a \cts map \st $|\ph(t)|\leq 1$ for all $t\in [0,1]$ and \(\ka>0\). Then \tes a nested map \(\ps:[0,1]\ra \HN\) \st \(\ps(0)\geq \ph(0)\), \(\ps(1)\leq \ph(1)\), $|\ps(t)|\leq 1$ for all $t\in [0,1]$ and
\[\sup_{t\in [0,1]}E_{\ve}(\ps(t))\leq \sup_{t\in [0,1]}E_{\ve}(\ph(t))+\ka.\]
\end{pro}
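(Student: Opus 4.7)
The plan is to adapt the discrete approximation strategy from \cite{CL}*{Proposition 6.1} to the Allen--Cahn setting. The three main ingredients will be: (i) a sufficiently fine partition of $[0,1]$ coming from the continuity of $\ph$, (ii) a short nested ``wedge--vee'' interpolation between the values $\ph(t_i)$ at consecutive partition points, and (iii) the concatenation lemma (Lemma \ref{l.concatenation}) to glue the pieces into a single nested map.

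First I would choose the partition. Since $\ph:[0,1]\ra\HN$ is continuous and $[0,1]$ is compact, $\ph$ is uniformly continuous and $\sup_{t}\nm{\ph(t)}_{\HN}<\infty$. A routine computation shows that the functional $\Ee{\cdot}$ is continuous in the $\HN$-topology on the set $\{w\in\HN:|w|\leq 1\}$: the gradient contribution is controlled via Cauchy--Schwarz, $\bigl|\int|\na u|^2-|\na v|^2\bigr|\leq\nm{\na(u-v)}_{L^2}\bigl(\nm{\na u}_{L^2}+\nm{\na v}_{L^2}\bigr)$, and the potential contribution by the Lipschitz bound $|W(u)-W(v)|\leq\nm{W'}_{L^{\infty}([-1,1])}|u-v|$ combined with $\vol(N)<\infty$. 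I would then pick $\de>0$ small enough (depending on $\ve$, $\ka$, $W$, and $\sup_t\nm{\ph(t)}_{\HN}$) so that any $w\in\HN$ with $|w|\leq 1$ and $\nm{w-\ph(t)}_{\HN}\leq 2\de$ satisfies $\Ee{w}\leq\Ee{\ph(t)}+\ka$, and finally a partition $0=t_0<t_1<\cdots<t_k=1$ with $\nm{\ph(t_i)-\ph(t_{i-1})}_{\HN}<\de$ for every $i$.

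Next, on each subinterval I would define the short nested map
\[\ze_i(s)=(1-s)\bigl(\ph(t_{i-1})\vee\ph(t_i)\bigr)+s\bigl(\ph(t_{i-1})\wedge\ph(t_i)\bigr),\quad s\in[0,1].\]
This is affine in $s$ (hence continuous), takes values in $\{|w|\leq 1\}$, is nested because $\ph(t_{i-1})\vee\ph(t_i)\geq\ph(t_{i-1})\wedge\ph(t_i)$, and satisfies $\ze_i(0)\geq\ph(t_{i-1})$ and $\ze_i(1)\leq\ph(t_i)$. The standard identities $\nm{(u\vee v)-u}_{\HN}\leq\nm{u-v}_{\HN}$ and $\nm{(u\wedge v)-u}_{\HN}\leq\nm{u-v}_{\HN}$ then give $\nm{\ze_i(s)-\ph(t_{i-1})}_{\HN}\leq 2\de$ uniformly in $s$, and by the choice of $\de$ this yields $\sup_{s}\Ee{\ze_i(s)}\leq\Ee{\ph(t_{i-1})}+\ka\leq A:=\sup_{t\in[0,1]}\Ee{\ph(t)}+\ka$.

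Finally, the comparability $\ze_{i+1}(0)=\ph(t_i)\vee\ph(t_{i+1})\geq\ph(t_i)\geq\ph(t_{i-1})\wedge\ph(t_i)=\ze_i(1)$ lets me apply Lemma \ref{l.concatenation} inductively to $\ze_1,\ldots,\ze_k$ (with common energy bound $A$, after a linear reparameterisation), producing a nested map $\ps:[0,1]\ra\HN$ with $|\ps|\leq 1$, $\ps(0)\geq\ze_1(0)\geq\ph(0)$, $\ps(1)\leq\ze_k(1)\leq\ph(1)$, and $\sup_{t}\Ee{\ps(t)}\leq A$, which is the required conclusion. The main technical point is the energy bound on the intermediate values $\ze_i(s)$: the naive linear interpolation $(1-s)\ph(t_{i-1})+s\ph(t_i)$ need not be nested, while the wedge--vee interpolation is automatically nested but requires the $\HN$-continuity of $\Ee{\cdot}$ on the unit $L^{\infty}$-ball to propagate the energy bound from the two endpoints to the entire short path.
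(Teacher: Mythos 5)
Your proof is correct, but it takes a genuinely different route from the paper. The paper follows the Chambers--Liokumovich scheme closely: it discretizes $\ph$, inserts the minima $\hat{\ph}_{2k+1}=\min\{\hat{\ph}_{2k},\hat{\ph}_{2k+2}\}$, and then builds each short nested path with the deformation Lemma \ref{l.deformation}, whose proof is a ball-by-ball radial sweep using the no-concentration Lemma \ref{l.no.conc.mass} and the coarea Lemma \ref{l.coarea} to select good cut-off radii; the only closeness it needs between consecutive slices is $L^1$-closeness of the functions and of their energy densities. You instead exploit the fact that, for fixed $\ve>0$, $E_{\ve}$ is locally Lipschitz on $\{|u|\leq 1\}$ with respect to the $H^1(N)$ norm, so after a sufficiently fine partition the monotone segment from $\ph(t_{i-1})\vee\ph(t_i)$ down to $\ph(t_{i-1})\wedge\ph(t_i)$ stays $H^1$-close to $\ph(t_{i-1})$ (via $\nm{(u\vee v)-u}_{H^1(N)}\leq\nm{u-v}_{H^1(N)}$ and its wedge analogue) and hence has energy within $\ka$ of $\sup_t\Ee{\ph(t)}$; the pieces are then glued exactly as in the paper, by iterating Lemma \ref{l.concatenation}, and your verification of the comparability $\ze_{i+1}(0)\geq\ze_i(1)$ and of the endpoint inequalities is sound. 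Your argument is shorter and bypasses Lemmas \ref{l.no.conc.mass}, \ref{l.coarea} and \ref{l.deformation} entirely for this proposition (though the coarea lemma is still needed elsewhere, in Lemma \ref{l.deformation.a}); what the paper's heavier construction buys is robustness of the kind needed in the geometric setting of \cite{CL}, where the area functional is not continuous in the flat topology and no analogue of your norm-continuity argument is available, but for the fixed-$\ve$ statement at hand that robustness is not required. One small point worth making explicit if you write this up: the uniformity of your $\de$ over $t\in[0,1]$ uses $\sup_{t}\nm{\ph(t)}_{H^1(N)}<\infty$, which you correctly obtain from continuity of $\ph$ and compactness of $[0,1]$.
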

Before we prove Proposition \ref{p.approx.by.nested}, we need to prove few lemmas.
\begin{lem}\label{l.no.conc.mass}
	Let \(\ve>0\) and \(w:[0,1]\ra H^1(N)\) be a \cts map. Then, \fa \(\de>0\), \tes \(r>0\) \st
	\[\Ee{w(t),B(p,r)}\leq \de,\]
	\fa \(t\in [0,1]\) and \(p\in N\).
\end{lem}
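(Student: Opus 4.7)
I would argue by contradiction. Suppose, on the contrary, that there exists $\delta_0 > 0$ together with sequences $r_k \to 0^+$, $t_k \in [0,1]$ and $p_k \in N$ such that $E_\ve(w(t_k), B(p_k, r_k)) \geq \delta_0$ for every $k$. The goal is to derive a contradiction from the continuity of $w$ and the compactness of $[0,1]$ and of the closed manifold $N$.

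First I would extract a subsequence, using compactness, so that $t_k \to t_*$ in $[0,1]$ and $p_k \to p_*$ in $N$. By continuity of $w$, this gives $w(t_k) \to w(t_*)$ in $H^1(N)$.

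Next I would establish $L^1$ convergence of the Allen-Cahn energy densities, $e_\ve(w(t_k)) \to e_\ve(w(t_*))$ in $L^1(N)$. For the gradient part, one uses the factorization $|\nabla w(t_k)|^2 - |\nabla w(t_*)|^2 = \langle \nabla(w(t_k) - w(t_*)),\, \nabla(w(t_k) + w(t_*))\rangle$ together with the Cauchy-Schwarz inequality and the uniform $H^1$ bound on the convergent sequence. For the potential part, the $L^2$ convergence $w(t_k) \to w(t_*)$ yields a further subsequence along which $w(t_k) \to w(t_*)$ pointwise a.e.; since $W$ is continuous and bounded, the dominated convergence theorem then gives $W(w(t_k))/\ve \to W(w(t_*))/\ve$ in $L^1(N)$ along that subsequence. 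I would run the contradiction on this subsequence.

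Finally I would use the triangle inequality to bound
\[
E_\ve(w(t_k), B(p_k, r_k)) \leq \|e_\ve(w(t_k)) - e_\ve(w(t_*))\|_{L^1(N)} + \int_{B(p_k, r_k)} e_\ve(w(t_*)).
\]
The first term tends to $0$ by the previous step. Since $e_\ve(w(t_*)) \in L^1(N)$ and $\vol(B(p_k, r_k)) \to 0$, absolute continuity of the Lebesgue integral forces the second term to tend to $0$ as well. This yields $E_\ve(w(t_k), B(p_k, r_k)) \to 0$, contradicting the assumption that it is bounded below by $\delta_0$.

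The main subtlety is the $L^1$ convergence of the potential term: because the double-well $W$ is not assumed globally Lipschitz (its derivative grows like a cubic for standard choices), one cannot pass from $L^2$ convergence of $w(t_k)$ directly to $L^1$ convergence of $W(w(t_k))$ by a Lipschitz estimate. Here the global boundedness of $W$ assumed in the setup, together with the passage to a pointwise a.e.\ convergent subsequence, is essential for invoking dominated convergence. Everything else is routine compactness and absolute continuity.
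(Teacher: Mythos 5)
Your proof is correct and takes essentially the same route as the paper's: a contradiction argument using compactness of $[0,1]\times N$, continuity of $w$ in $H^1(N)$, and the vanishing of $\int_{B}e_{\ve}(w(t_*))$ over balls of shrinking volume (absolute continuity of the integral). The only difference is organizational — you establish global $L^1$ convergence of the energy densities and finish with a triangle inequality, while the paper passes to the limit in $i$ on a fixed ball $B(p_0,m^{-1})$ and then lets $m\to\infty$ — and your write-up simply makes explicit the continuity step (bounded $W$, dominated convergence, and the gradient factorization) that the paper leaves implicit.
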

\begin{proof}
	We assume by contradiction that \te \(\de>0\) and sequences \(\{t_i\}_{i=1}^{\infty}\subset [0,1]\) and \(\{p_i\}_{i=1}^{\infty}\subset N\) \st
	\begin{equation}\label{e.conc.mass}
	\Ee{w(t_i),B\left(p_i,i^{-1}\right)}> \de.
	\end{equation}
	Without loss of generality, we can assume that \(t_i\ra t_0\) and \(p_i\ra p_0\). Then, for all \(m\in \bbn\), if \(i\) is sufficiently large, \(B(p_i,i^{-1})\subset B(p_0,m^{-1})\). \tf by \eqref{e.conc.mass}, for all \(m\in \bbn\),
	\begin{align*}
	\Ee{w(t_0),B(p_0,m^{-1})}=\lim_{i\ra \infty}\Ee{w(t_i),B(p_0,m^{-1})}\geq \de.
	\end{align*}
	This contradicts the fact that
	\[\lim_{m\ra \infty}\Ee{w(t_0),B(p_0,m^{-1})}=0.\]
\end{proof}
\begin{lem}\label{l.coarea}
	Let \(u_0,u_1\in L^{\infty}(N)\) \w \(u_0\geq u_1\) and \(\md{u_0}, \md{u_1}\leq 1\). \sps \(h:\bbr\ra [-1,1]\) is a piecewise $C^1$ function so that $h'\in L^{\infty}(\bbr)$ and $e_{\ve}(h)$ is compactly supported inside the compact interval $[-a,a]$. For \(p\in N\) and \(r>0\), let \(b^{r}:N\ra \bbr\) be defined by \(b^r(x)=h(d_{p}(x)-r)\), where \(d_p(x)=d(x,p)\). Then, for all \(0< s_1< s_2\) and \(\ve>0\),
	\[\int_{ s_1}^{ s_2}E_{\ve}\left(b^r,\{u_0>b^r>u_1\}\right)\;dr\leq \int_{B(p,s_2+a)}\int_{\{u_0(x)>h>u_1(x)\}}e_{\ve}(h)(t)\;dt\;d\cH^{n+1}(x).\]
\end{lem}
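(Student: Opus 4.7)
The plan is a direct pointwise bound plus Fubini/substitution, with the distance function $d_p=d(\cdot,p)$ playing the key role. Since $d_p$ is $1$-Lipschitz on $N$, by Rademacher's theorem it is differentiable a.e.\ with $|\na d_p|\leq 1$. Combined with the chain rule (valid a.e.\ because $h$ is piecewise $C^1$ with $h'\in L^{\infty}$ and $b^r$ is locally Lipschitz), this gives
\[|\na b^r(x)|=|h'(d_p(x)-r)|\cdot |\na d_p(x)|\leq |h'(d_p(x)-r)|\quad \text{for a.e. } x,\]
and hence the crucial pointwise energy estimate
\[e_{\ve}(b^r)(x)\leq e_{\ve}(h)(d_p(x)-r)\quad \text{for a.e. } x\in N,\; \text{a.e. } r\in[s_1,s_2],\]
after observing that $W(b^r(x))=W(h(d_p(x)-r))$ pointwise.

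Next I would integrate this bound over $\{u_0>b^r>u_1\}$, invoke Fubini to swap the $r$- and $x$-integrations (the integrand is nonnegative), and then perform the substitution $t=d_p(x)-r$ for each fixed $x$. Under this substitution the condition $u_0(x)>b^r(x)>u_1(x)$ becomes $u_0(x)>h(t)>u_1(x)$, and the range $r\in [s_1,s_2]$ becomes $t\in [d_p(x)-s_2,\,d_p(x)-s_1]$ (with $dr=-dt$ accounting for the orientation reversal). The left hand side is therefore majorized by
\[\int_N\int_{d_p(x)-s_2}^{d_p(x)-s_1}\mathbf{1}_{\{u_0(x)>h(t)>u_1(x)\}}\,e_{\ve}(h)(t)\,dt\,d\cH^{n+1}(x).\]

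Finally, to reach the stated form I would enlarge the inner $t$-range from $[d_p(x)-s_2,d_p(x)-s_1]$ to the full set $\{t:u_0(x)>h(t)>u_1(x)\}$, which is legitimate because $e_{\ve}(h)\geq 0$. The support hypothesis $\textup{supp}(e_{\ve}(h))\subset [-a,a]$ then restricts the outer integration to $B(p,s_2+a)$: indeed, if $d_p(x)>s_2+a$, then for every $r\in[s_1,s_2]$ one has $d_p(x)-r>a$, so $e_{\ve}(h)(d_p(x)-r)=0$ and such $x$ contributes nothing to the left hand side. Combining these observations yields the desired inequality.

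There is no serious obstacle here; the argument is essentially bookkeeping layered on the Lipschitz bound $|\na d_p|\leq 1$. The only minor subtlety is that $d_p$ fails to be smooth on the cut locus and at the point $p$, a closed set of measure zero, so the chain-rule identity for $\na b^r$ must be interpreted in the a.e.\ sense via Rademacher's theorem. This does not affect the integration, and the piecewise $C^1$ nature of $h$ is likewise handled by standard absolute-continuity arguments.
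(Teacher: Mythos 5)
Your argument is correct, but it takes a genuinely different (and more elementary) route than the paper. The paper's proof slices the spatial integral by the level sets of $d_p$: it applies the co-area formula to rewrite $E_{\ve}(b^r,\{u_0>b^r>u_1\})$ as an integral over $t$ of $e_{\ve}(h)(t)$ times $\cH^n\bigl(\{d_p=r+t\}\cap\{u_0>h(t)>u_1\}\bigr)$, then uses Fubini in $(r,t)$ and the co-area formula a second time (together with $|\na d_p|=1$ a.e.) to bound $\int_{s_1}^{s_2}\cH^n\bigl(\{d_p=r+t\}\cap\{u_0>h(t)>u_1\}\bigr)\,dr$ by $\cH^{n+1}\bigl(B(p,s_2+a)\cap\{u_0>h(t)>u_1\}\bigr)$. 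You instead avoid the co-area formula entirely: the pointwise bound $e_{\ve}(b^r)(x)\leq e_{\ve}(h)(d_p(x)-r)$ coming from $|\na d_p|\leq 1$, followed by Fubini in $(r,x)$ and the one-dimensional substitution $t=d_p(x)-r$ at each fixed $x$, delivers the same estimate with only Rademacher and elementary calculus; this is arguably cleaner, while the paper's slicing argument is the one that generalizes naturally when one wants level-set information rather than just an energy bound. Two small points of care: (1) the restriction of the outer integral to $B(p,s_2+a)$ should be performed \emph{before} enlarging the inner $t$-range to $\{u_0(x)>h>u_1(x)\}$ (after enlargement the inner integral need not vanish for far-away $x$); your own justification --- that $x$ with $d_p(x)>s_2+a$ contributes nothing because $e_{\ve}(h)(d_p(x)-r)=0$ for all $r\in[s_1,s_2]$ --- is exactly the right reason, so this is only a matter of ordering the two steps. (2) For the a.e.\ chain rule one should note that the finitely many non-differentiability points of $h$ are hit only on level sets of $d_p$, which have $\cH^{n+1}$-measure zero (the paper records this via $|\na d_p|=1$ a.e.\ and its citation of Gilbarg--Trudinger); with that remark your pointwise inequality holds for a.e.\ $x$ for every $r$, which is all the integration requires.
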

\begin{proof} 
	\(d_p:N\ra \bbr\) is a Lipschitz \cts \fn \w \(\md{\na d_p}=1\), \(\cH^{n+1}\)-a.e. As a consequence,
	\begin{equation}
	\cH^{n+1}\left(\left\{d_p=s\right\}\right) = 0 \;\;\forall\;s\geq 0,\label{sph}
	\end{equation}
	since otherwise $\nabla d_p=0$ on a set of positive \(\cH^{n+1}\)-measure. For all $r>0$, $b^r$ is Lipschitz continuous; by \cite{GT}*{Theorem 7.8} and \eqref{sph},
	\begin{equation}
	\nabla b^r(x) = h'(d_p(x)-r)\nabla d_p(x)\;\text{ for } \cH^{n+1}\text{-a.e. }x\in N.
	\end{equation}
	\tf
	\begin{align}
	& \int_{ s_1}^{ s_2}E_{\ve}\left(b^r,\{u_0>b^r>u_1\}\right)\;dr\nonumber\\
	& =\int_{ s_1}^{ s_2}\int_{\{u_0>b^r>u_1\}}\left[\frac{\ve}{2}h'(d_p(x)-r)^2+\frac{1}{\ve}W\left(h(d_p(x)-r)\right)\right]\;d\cH^{n+1}(x)\;dr\nonumber\\
	&=\int_{ s_1}^{ s_2}\int_{-a}^{a}\left[\frac{\ve}{2}h'(t)^2+\frac{1}{\ve}W(h(t))\right]\cH^n\left(\{d_p-r=t\}\cap \{u_0>b^r>u_1\}\right)\;dt\;dr.\label{e.coarea}
	\end{align}
	In the last step we have used the co-area formula. It follows from the definition of \(b^r\) that for fixed \(r\) and \(t\),
	\[\{d_p-r=t\}\cap \{u_0>b^r>u_1\}=\{d_p=r+t\}\cap \{u_0>h(t)>u_1\}.\]
	Hence, by Fubini's theorem and the co-area formula, \eqref{e.coarea} implies that
	\begin{align*}
	& \int_{ s_1}^{ s_2}E_{\ve}\left(b^r,\{u_0>b^r>u_1\}\right)\;dr\\
	&=\int_{ s_1}^{ s_2}\int_{-a}^{a}\left[\frac{\ve}{2}h'(t)^2+\frac{1}{\ve}W(h(t))\right]\cH^n\left(\{d_p=r+t\}\cap \{u_0>h(t)>u_1\}\right)\;dt\;dr\\
	&\leq \int_{-a}^{a}e_{\ve}(h)(t)\;\cH^{n+1}\left(B(p,s_2+a)\cap\{u_0>h(t)>u_1\}\right)\;dt\\
	& = \int_{-a}^{a}e_{\ve}(h)(t)\int_{B(p,s_2+a)} \chi_{\{u_0>h(t)>u_1\}}(x)\;d\cH^{n+1}(x)\;dt\\
	& = \int_{B(p,s_2+a)}\int_{\{u_0(x)>h>u_1(x)\}}e_{\ve}(h)(t)\;dt\;d\cH^{n+1}(x).
	\end{align*}
\end{proof}
For \(\rho>0\), let \(h^{\rh}:\bbr\ra [-1,1]\) be defined by
\begin{equation}\label{e.h.rho}
h^{\rh}(t)=\begin{cases}
\frac{t}{\rh} &\text{ if } |t|\leq \rh;\\
1 & \text{ if } t \geq \rho;\\
-1& \text{ if } t\leq -\rh.
\end{cases}
\end{equation}
Setting \(h=h^{\rh}\) in Lemma \ref{l.coarea} and using the notation \(b^{\rh,r}(x)=h^{\rh}(d_p(x)-r)\), one obtains
\begin{equation}\label{e.coarea.specialized}
\int_{ s_1}^{ s_2}E_{\ve}\left(b^{\rh,r},\{u_0>b^{\rh,r}>u_1\}\right)\;dr\leq C(\ve,\rh)\nm{u_0-u_1}_{L^1(N)},
\end{equation}
where \(u_0,\;u_1\) are as in Lemma \ref{l.coarea} and 
\begin{equation}\label{C.ep.rh}
C(\ve,\rh)=\frac{\ve}{2\rh^2}+\frac{1}{\ve}\nm{W}_{L^{\infty}([-1,1])}.
\end{equation}
\begin{lem}\label{l.deformation}
	Let \(\ve,\de>0\). \sps \(u_0,u_1\in H^1(N)\cap L^{\infty}(N)\) \st \(u_0\geq u_1\); \(\md{u_0},\md{u_1}\leq 1\) and 
	\begin{equation}\label{e.energy.diff.u0.u1}
	\int_{N}\md{e_{\ve}(u_0)-e_{\ve}(u_1)}\leq \de.
	\end{equation}
	We fix a nested map \(w:[0,1]\ra H^1(N)\) \st $w(0)\equiv 1$, $w(1)\equiv -1$ and \(\md{w(t)}\leq 1\) \fa \(t \in [0,1]\). Let \(R>0\) be \st \fa \(p\in N\),
	\begin{equation}\label{e.energy.w.u0.u1}
	E_{\ve}(u_0,B(p,4R))\leq \de,\quad E_{\ve}(u_1,B(p,4R))\leq \de\quad\text{ and }\quad \sup_{t\in [0,1]}E_{\ve}(w(t),B(p,4R))\leq \de.
	\end{equation}
	\sps \(N\) can be covered by \(I\) balls of radius \(R\). Then, using the notation of \eqref{C.ep.rh},
	\begin{equation}\label{e.L^1.norm}
	\nm{u_0-u_1}_{L^1(N)}\leq\frac{\de R}{C(\ve, R)I}
	\end{equation}
	implies that \tes a nested map \(u:[0,1]\ra H^1(N)\) \st \(u(0)=u_0\), \(u(1)=u_1\), $|u(t)|\leq 1$ for all $t\in [0,1]$ and
	\[\sup_{t\in [0,1]}E_{\ve}(u(t))\leq \min\{E_{\ve}(u_0),E_{\ve}(u_1)\}+9\de.\]
\end{lem}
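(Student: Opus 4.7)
The plan is to construct the nested deformation ball-by-ball over a finite covering of $N$, mirroring the geometric strategy of \cite{CL}*{Proposition 6.1} in the Allen--Cahn setting. Cover $N$ by $I$ balls $\{B(p_i, R)\}_{i=1}^I$. Introduce the distance bump $b_i^r(x) := h^R(d_{p_i}(x) - r)$ from \eqref{e.h.rho} with $\rho = R$. Averaging the coarea estimate \eqref{e.coarea.specialized} over $r \in [2R, 3R]$ and combining with the $L^1$ bound from \eqref{e.L^1.norm} (using $\|u^{(i-1)} - u_1\|_{L^1} \leq \|u_0 - u_1\|_{L^1}$ at each inductive step), selects a radius $r_i \in [2R, 3R]$ satisfying
\[
E_\ve\bigl(b_i^{r_i}, \{u^{(i-1)} > b_i^{r_i} > u_1\}\bigr) \leq \frac{\delta}{I}.
\]
Set $u^{(0)} := u_0$ and inductively $u^{(i)} := \max\{u_1, \min\{u^{(i-1)}, b_i^{r_i}\}\}$. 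Since $r_i \geq 2R$ forces $b_i^{r_i} \equiv -1$ on $B(p_i, R)$, we get $u^{(i)} = u_1$ on $B(p_i, R)$; in particular $u^{(I)} = u_1$. Also $u^{(i)}$ coincides with $u^{(i-1)}$ outside $B(p_i, 4R)$, and the sequence $u_0 \geq u^{(1)} \geq \cdots \geq u^{(I)}$ is monotone.

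For each $i$, I would connect $u^{(i-1)}$ to $u^{(i)}$ by the continuous nested homotopy
\[
\psi_s^{(i)}(x) := \max\bigl\{u_1(x), \min\{u^{(i-1)}(x), b_i^{r(s)}(x)\}\bigr\}, \quad s \in [0, 1],
\]
where $r(s)$ increases monotonically from a starting value with $b_i^{r(s)} \equiv 1$ (so $\psi_0^{(i)} = u^{(i-1)}$) up to $r_i$ (so $\psi_1^{(i)} = u^{(i)}$). Monotonicity of $b_i^r$ in $r$ makes $\psi^{(i)}$ nested, and $H^1$-continuity in $r$ makes it continuous. Concatenating all the $\psi^{(i)}$'s iteratively via Lemma \ref{l.concatenation} then yields the required $u : [0, 1] \to \HN$ with $u(0) = u_0$ and $u(1) = u_1$.

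The heart of the argument is the energy bound. Partition $N = A_s \sqcup C_s \sqcup D_s$ according to whether $\psi_s^{(i)}$ equals $u_1$, $b_i^{r(s)}$, or $u^{(i-1)}$, so that
\[
E_\ve(\psi_s^{(i)}) - E_\ve(u^{(i-1)}) = \bigl[E_\ve(u_1, A_s) - E_\ve(u^{(i-1)}, A_s)\bigr] + \bigl[E_\ve(b_i^{r(s)}, C_s) - E_\ve(u^{(i-1)}, C_s)\bigr].
\]
Since $A_s \cup C_s \subseteq B(p_i, 4R)$, the first bracket is bounded by $E_\ve(u_1, B(p_i, 4R)) \leq \delta$, and summed over $i$ the cumulative ``switching'' cost from $u_0$ to $u_1$ reduces, via $\int_N |e_\ve(u_0) - e_\ve(u_1)| \leq \delta$, to a single $\delta$; the transition cost at the step endpoints $\sum_i E_\ve(b_i^{r_i}, C_s) \leq \sum_i \delta/I$ adds another $\delta$. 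The hard part will be controlling $E_\ve(b_i^{r(s)}, C_s)$ uniformly in $s$ \emph{within} a single step, since the coarea inequality only yields an average bound in $r$, not a pointwise one. I expect the hypothesis $\sup_t E_\ve(w(t), B(p, 4R)) \leq \delta$ to enter precisely here: the profile $b_i^{r(s)}$, restricted to $B(p_i, 4R)$, is a nested family with values in $[-1, 1]$ analogous to $w(t)$, and comparing to $w$ via the $4R$-ball hypothesis supplies the uniform in-step bound. Tallying the three hypothesis contributions of $\delta$ on balls ($u_0$, $u_1$, $w$), one from $\int |e_\ve(u_0) - e_\ve(u_1)|$, one from the coarea transition, together with cross terms, accounts for the stated $9\delta$ in the conclusion.
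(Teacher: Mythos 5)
Your discrete construction (the covering, the profiles $b_i^{r}$, the coarea averaging over $r\in[2R,3R]$ to select $r_i$ with $E_{\ve}(b_i^{r_i},\{v_{i-1}>b_i^{r_i}>u_1\})\leq \de/I$, and the monotone sequence $u_0=v_0\geq v_1\geq\dots\geq v_I=u_1$) matches the paper exactly. The gap is in the step you yourself flag as ``the hard part'': connecting $v_{i-1}$ to $v_i$ by sliding the radius, $\ps^{(i)}_s=\max\{u_1,\min\{v_{i-1},b_i^{r(s)}\}\}$. For intermediate radii $r(s)$ the term $E_{\ve}\big(b_i^{r(s)},\{v_{i-1}>b_i^{r(s)}>u_1\}\big)$ is not controlled by anything in the hypotheses: the coarea inequality only bounds its \emph{average} in $r$, and a priori it can be as large as $C(\ve,R)\,\cH^{n+1}(B(p_i,4R))$, since $\|u_0-u_1\|_{L^1}$ small does not make the set $\{v_{i-1}>b_i^{r(s)}>u_1\}$ small in measure. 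Your hope that the hypothesis $\sup_t E_{\ve}(w(t),B(p,4R))\leq\de$ rescues this is not substantiated; there is no comparison mechanism transferring an energy bound from the fixed nested map $w$ to the distance-profiles $b_i^{r(s)}$, and indeed this is not how $w$ enters the actual argument.

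The paper avoids the sliding-radius homotopy altogether: it interpolates between $v_{i-1}$ and $v_i$ by sandwiching $w$ itself, $\be_i(t)=\min\{v_{i-1},\max\{v_i,w(t)\}\}$, which is nested, has $\be_i(0)=v_{i-1}$, $\be_i(1)=v_i$, and equals $v_{i-1}=v_i$ outside $B(p_i,4R)$, so that $E_{\ve}(\be_i(t))\leq E_{\ve}(v_i,N)+E_{\ve}(v_{i-1},B(p_i,4R))+E_{\ve}(v_i,B(p_i,4R))+E_{\ve}(w(t),B(p_i,4R))$. This is exactly where the three $4R$-ball hypotheses are used. To make that count close at $9\de$ one also needs two estimates your sketch does not establish: the global bound $E_{\ve}(v_i)\leq\min\{E_{\ve}(u_0),E_{\ve}(u_1)\}+2\de$ (which uses \eqref{e.energy.diff.u0.u1} to trade $u_0$-energy for $u_1$-energy on the region where $v_i$ still equals $u_0$, via the partition $G^0_i,G^1_i,G_{k,i}$ and its monotonicity in $i$) and the local bound $E_{\ve}(v_i,B(p,4R))\leq 3\de$; your informal tally of ``hypothesis contributions plus cross terms'' does not substitute for these. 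A minor point: once the homotopies are built, their endpoints agree exactly ($\be_i(1)=\be_{i+1}(0)=v_i$), so plain concatenation suffices and Lemma \ref{l.concatenation} is not needed here.
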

\begin{proof}
	Let 
	\begin{equation}\label{e.cover}
	N=\bigcup_{i=1}^I B(p_i,R).
	\end{equation}
	For $r>0$ and \(i=1,2,\dots,I\), let \(b_i^r:N\ra [-1,1]\) be defined by (using the notation as in \eqref{e.h.rho})
	\[b_i^r(x)=h^R(d_{p_i}(x)-r).\]
	We inductively define a sequence \(\{v_i\}_{i=0}^I\) with 
	\[v_0\geq v_1\geq \dots \geq v_I\]
	as follows. Set \(v_0=u_0\).
	Let us assume that \(v_k\) has been defined for \(1\leq k\leq i-1\) and
	\[u_0=v_0\geq v_1\geq\dots\geq v_{i-1}\geq u_1,\]
	which implies that
	\begin{equation}\label{e.L^1.norm'}
	\nm{v_{i-1}-u_1}_{L^1(N)}\leq\nm{u_0-u_1}_{L^1(N)}.
	\end{equation}
	\tf using \eqref{e.coarea.specialized}, \eqref{e.L^1.norm'} and \eqref{e.L^1.norm},
	\[\int_{2R}^{3R}E_{\ve}\left(b_i^r,\{v_{i-1}>b_i^r>u_1\}\right)\;dr\leq \frac{\de R}{I}.\]
	So \tes \(r_i\in (2R,3R)\) \st
	\begin{equation}\label{e.energy.b_i}
	E_{\ve}(b_i^{r_i},\{v_{i-1}>b_i^{r_i}>u_1\}) \leq \frac{\de}{I}.
	\end{equation}
	We define
	\begin{equation}\label{e.def.v_i}
	v_i=\min\{v_{i-1},\max\{u_1,b_i^{r_i}\}\}=\begin{cases}
	v_{i-1} &\text{ on }\{b_i^{r_i}\geq v_{i-1}\};\\
	b_i^{r_i} &\text{ on }\{v_{i-1}>b_i^{r_i}>u_1\};\\
	u_1&\text{ on }\{u_1\geq b_i^{r_i}\}.
	\end{cases}
	\end{equation}
	Then \(v_{i-1}\geq v_i\geq u_1\). 
	
	Using the definition of \(v_i\) in \eqref{e.def.v_i}, one can prove by induction that for each \(1\leq i\leq I\), \te pairwise disjoint, \(\cH^{n+1}\)-measurable sets \(G^0_i\), \(G^1_i\), \(\{G_{k,i}\}_{k=1}^i\) with
	\begin{equation}\label{N.partn}
	N=G^0_i\cup G^1_i\cup\left(\bigcup_{k=1}^iG_{k,i}\right)
	\end{equation}
	 \st the following conditions are satisfied.
	\begin{itemize}
		\item[(i)]
		\begin{equation}\label{e.v_i}
		v_i=\begin{cases}
		u_0 &\text{ on } G^0_i; \\
		u_1&\text{ on }G^1_i;\\
		\bk & \text{ on }G_{k,i},\; 1\leq k\leq i.
		\end{cases}
		\end{equation}
		\item[(ii)] 
		\begin{equation}\label{e.G_ii}
		G_{i,i}=\{v_{i-1}>\bi>u_1\}.
		\end{equation}
		\item[(iii)] For \(1<i\leq I\) and \(1\leq k \leq i-1\),
		\begin{equation}\label{e.set.incl}
		G^0_{i-1}\supset G^0_{i};\quad G^1_{i-1}\subset G^1_i;\quad G_{k,i-1}\supset G_{k,i}.
		\end{equation}
		\item[(iv)] 
		\begin{equation}\label{e.u1}
		\bigcup_{k=1}^iB(p_k,R)\subset G^1_i
		\end{equation}
		(For this item one needs to use the fact that \(\bk\equiv -1\) on \(B(p_k,R)\).)
	\end{itemize}
	\eqref{e.u1} implies that (by \eqref{e.cover}) \(v_I=u_1.\) Moreover,
	\begin{align}
	\Ee{v_i} &= \Ee{u_0,G_i^0}+\Ee{u_1,G^1_i}+\sum_{k=1}^{i}\Ee{\bk,G_{k,i}}\;(\text{by }\eqref{N.partn},\; \eqref{e.v_i})\nonumber\\
	&\leq \min\{\Ee{u_0,N},\Ee{u_1,N}\}+\int_N|e_{\ve}(u_0)-e_{\ve}(u_1)|+\sum_{k=1}^{i}\Ee{\bk,G_{k,k}}\;(\text{by }\eqref{e.set.incl})\nonumber\\
	&\leq \min\{\Ee{u_0},\Ee{u_1}\}+2\de\;(\text{by } \eqref{e.energy.diff.u0.u1},\;\eqref{e.G_ii},\;\eqref{e.energy.b_i}). \label{e.energy.v_i}
	\end{align}
	Similarly, using \eqref{N.partn} and \eqref{e.v_i}, for any \(p\in N\),
	\begin{align}
	&\Ee{v_i,B(p, 4R)}\nonumber\\
	&= \Ee{u_0,G_i^0\cap B(p, 4R)}+\Ee{u_1,G^1_i\cap B(p, 4R)}+\sum_{k=1}^{i}\Ee{\bk,G_{k,i}\cap B(p, 4R)}\nonumber\\
	&\leq 3\de\;(\text{by } \eqref{e.energy.w.u0.u1},\;\eqref{e.set.incl},\;\eqref{e.G_ii},\;\eqref{e.energy.b_i}).\label{e.energy.b_i.ball}
	\end{align}
	For each \(1\leq i\leq I\), we define \(\be_i:[0,1]\ra H^1(N)\) by 
	\begin{equation*}
	\be_i(t)=\min\{v_{i-1},\max\{v_i,w(t)\}\}=\begin{cases}
	v_{i-1} &\text{ on }\{w(t)\geq v_{i-1}\};\\
	w(t) &\text{ on }\{v_{i-1}>w(t)>v_i\};\\
	v_i&\text{ on }\{v_i\geq w(t)\}.
	\end{cases}
	\end{equation*}
Here \(w:[0,1]\ra H^1(N)\) is as stated in the Lemma \ref{l.deformation}. Since \(w\) is nested, \(\be_i\) is also nested. It follows from \eqref{e.def.v_i} that \(|v_k|\leq 1\) \fa \(0\leq k\leq I\). Hence $|\be_i(t)|\leq 1$ for all $t\in [0,1]$. Moreover, \(w(0)\equiv 1\) (resp. \(w(1)\equiv -1\)) implies that \(\be_i(0)=v_{i-1}\) (resp. \(\be_i(1)=v_{i}\)). Using the fact that \(\bi\equiv 1\) on \(N\setminus B(p_i,4R)\), it also follows from \eqref{e.def.v_i} that \(v_{i-1}=v_i\) on \(N\setminus B(p_i,4R)\); hence \fa \(t\in [0,1]\),
	\begin{equation*}
	\be_i(t)=v_{i-1}=v_i\text{ on }N\setminus B(p_i, 4R).
	\end{equation*}
	\tf \fa \(t\in [0,1]\),
	\begin{align*}
	\Ee{\be_i(t)}&\leq \Ee{v_i,N}+\Ee{v_{i-1},B(p_i,4R)}+\Ee{v_{i},B(p_i,4R)}+\Ee{w(t),B(p_i,4R)}\\
	&\leq \min\{\Ee{u_0},\Ee{u_1}\}+9\de \;(\text{by } \eqref{e.energy.v_i},\;\eqref{e.energy.b_i.ball},\;\eqref{e.energy.w.u0.u1}).
	\end{align*}
	Finally we obtain the required map \(u:[0,1]\ra H^1(N)\) by concatenating all the maps \(\be_i\), \(i=1,2,\dots,I\).
\end{proof}

\begin{proof}[Proof of Proposition \ref{p.approx.by.nested}]
We fix a nested map \(w_0:[0,1]\ra H^1(N)\) \st $w_0(0)\equiv 1$, $w_0(1)\equiv -1$ and \(\md{w_0(t)}\leq 1\) \fa \(t \in [0,1]\). (For instance, one can define \(w_0(t)\) to be equal to the constant function $1-2t$.) Let $\de_0=\ka/9$ and
\[A_0=\sup_{t\in [0,1]}E_{\ve}(\ph(t)).\]
By Lemma \ref{l.no.conc.mass}, \tes \(R_0>0\) \st \fa \(p\in N\) and $t\in [0,1],$
\begin{equation}\label{e.energy.w_0}
E_{\ve}(\ph(t),B(p,4R_0))\leq \frac{\de_0}{2}\quad\text{ and }\quad E_{\ve}(w_0(t),B(p,4R_0))\leq \de_0. 
\end{equation}
\sps \(N\) can be covered by \(I_0\) balls of radius \(R_0\). One can choose \(m\in\bbn\) \st \(|t_1-t_2|\leq 1/m\) implies
\begin{equation}\label{e.ps.1}
\int_N|e_{\ve}(\ph(t_1))-e_{\ve}(\ph(t_2))|\leq \frac{\de_0}{2}
\end{equation}
and 
\begin{equation}\label{e.ps.2}
\|\ph(t_1)-\ph(t_2)\|_{L^1(N)}\leq \frac{\de_0 R_0}{C(\ve,R_0)I_0}.
\end{equation}

We define the sequence \(\{\hat{\ph}_i\}_{i=0}^{2m}\) by setting \(\phh_{2k}=\ph(k/m)\) and
\begin{equation*}
\phh_{2k+1}=\min\{\phh_{2k}, \phh_{2k+2}\}=\begin{cases}
\phh_{2k}&\text{ on }\{\phh_{2k}\leq \phh_{2k+2}\};\\
\phh_{2k+2} &\text{ on }\{\phh_{2k} > \phh_{2k+2}\}.
\end{cases}
\end{equation*}
Hence, \eqref{e.ps.1} implies that for \(0\leq k \leq m-1\),
\begin{align}
&\int_N|e_{\ve}(\phh_{2k})-e_{\ve}(\phh_{2k+1})|= \int_{\{\phh_{2k}>\phh_{2k+2}\}}|e_{\ve}(\phh_{2k})-e_{\ve}(\phh_{2k+2})|\leq \frac{\de_0}{2}.\label{e.tht1}
\end{align}
Similarly, \eqref{e.ps.2} implies that for \(0\leq k \leq m-1\),
\[\|\phh_{2k}-\phh_{2k+1}\|_{L^1(N)} \leq \frac{\de_0 R_0}{C(\ve,R_0)I_0}.\]
By \eqref{e.energy.w_0} and \eqref{e.tht1}, \fa \(0\leq i\leq 2m\) and $p\in N$,
\[E_{\ve}(\phh_i,B(p,4R_0))\leq\de_0.\]

By Lemma \ref{l.deformation}, for \(0\leq i\leq m-1\), \tes a nested map \(\ga_i:[0,1]\ra \HN\) \st \(\ga_i(0)=\phh_{2i}\), \(\ga_i(1)=\phh_{2i+1}\), \(|\ga_i(t)|\leq 1\) for all \(t\in [0,1]\) and 
\[\sup_{t\in [0,1]} E_{\ve}(\ga_i(t))\leq E_{\ve}(\phh_{2i})+\ka\leq A_0+\ka.\]
\tf
\[\ga_i(1)=\phh_{2i+1}\leq \phh_{2i+2}=\ga_{i+1}(0).\]
One obtains the map \(\ps\) in Proposition \ref{p.approx.by.nested} from the maps \(\{\ga_i\}_{i=0}^{m-1}\) by repeatedly applying Lemma \ref{l.concatenation}. More precisely, setting \(u_1=\ga_0\) and \(u_2=\ga_1\) in Lemma \ref{l.concatenation}, we get a nested map \(\bar{\ga}_1:[0,1]\ra \HN\) \st
\[\bar{\ga}_1(0)\geq \hat{\ph}_0,\;\gab_1(1)\leq \hat{\ph}_3,\; \sup
_{t\in [0,1]}\nm{\gab_1(t)}_{L^{\infty}(N)}\leq 1\;\text{ and }\; \sup_{t\in [0,1]} E_{\ve}(\gab_1(t))\leq A_0+\ka.\]
Let us assume that \tes a nested map \(\gab_i:[0,1]\ra \HN\), \(1\leq i<m-1\), \st
\begin{align}
&\bar{\ga}_i(0)\geq \hat{\ph}_0,\quad\gab_i(1)\leq \hat{\ph}_{2i+1}\leq \phh_{2i+2}=\ga_{i+1}(0),\\
\sup_{t\in [0,1]}&\nm{\gab_i(t)}_{L^{\infty}(N)}\leq 1 \;\text{ and }\; \sup_{t\in [0,1]} E_{\ve}(\gab_i(t))\leq A_0+\ka.
\end{align}
Then choosing \(u_1=\gab_i\) and \(u_2=\ga_{i+1}\) in Lemma \ref{l.concatenation}, one gets a nested map \(\bar{\ga}_{i+1}:[0,1]\ra \HN\) \st
\[\bar{\ga}_{i+1}(0)\geq \hat{\ph}_0,\;\gab_{i+1}(1)\leq \hat{\ph}_{2i+3},\;\sup_{t\in [0,1]}\nm{\gab_{i+1}(t)}_{L^{\infty}(N)}\leq 1\; \text{ and }\; \sup_{t\in [0,1]} E_{\ve}(\gab_{i+1}(t))\leq A_0+\ka.\]
The map \(\ps\) in Proposition \ref{p.approx.by.nested} is obtained by setting \(\ps=\gab_{m-1}\).
\end{proof}
\section{A deformation lemma}
The following Lemma \ref{l.deformation.a} is motivated by \cite{CL}*{Lemma 7.1 (3)}. To prove this lemma, we adapt the argument of Chambers and Liokumovich \cite{CL}*{Proof of Lemma 7.1} in the Allen-Cahn setting.
\begin{lem}\label{l.deformation.a}
Let \(N\) be a closed \Rm \mf and \(\Om \subset N\) be an open set \w smooth boundary \(\del\Om\). \sps \(f:N\ra [1/3,\infty)\) is a Morse \fn so that in the interval $[1/3,2/3]$, $f$ has no critical value which is a non-global local maxima or minima;
\[\min_N f=1/3;\quad \max_N f>1;\quad \Om\subset\subset f^{-1}\left([1/3,2/3)\right).\]
We set 
$$\tilde{\Om}=f^{-1}\left([1/3,1]\right).$$
Then, for all \(\et>0\), \te \(\ve_1,\;\tilde{\et}>0\), depending on \(\et,\;\Om,\;\tilde{\Om},\;f\big|_{\tilde{\Om}}\), \st the following two conditions are satisfied.
\begin{itemize}
	\item[(i)] If \(0<\ve\leq \ve_1\) and \(u_0\in \HN\) satisfies \(|u_0|\leq 1\), \(\|1-u_0\|_{L^1(\Om)}\leq\tilde{\et}\), then \tes \(u:[0,1]\ra \HN\) \st \(u(0)=u_0\), \(u(1)\big|_{\Om}\equiv 1\) and 
	\[\sup_{t\in [0,1]}\Ee{u(t)}\leq\Ee{u_0}+2\si\cH^n(\del\Om)+\et.\]
	\item[(ii)] If \(0<\ve\leq \ve_1\) and \(u_0\in \HN\) satisfies  \(|u_0|\leq 1\), \(\|1+u_0\|_{L^1(\Om)}\leq\tilde{\et}\), then \tes \(u:[0,1]\ra \HN\) \st \(u(0)=u_0\), \(u(1)\big|_{\Om}\equiv -1\) and 
	\[\sup_{t\in [0,1]}\Ee{u(t)}\leq\Ee{u_0}+2\si\cH^n(\del\Om)+\et.\]
\end{itemize}
\end{lem}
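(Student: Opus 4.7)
The plan is to realize the required path as the max family $u(t) := \max\{u_0, W_t\}$, where $\{W_t\}_{t \in [0,1]}$ is a continuous family in $H^1(N) \cap L^{\infty}$ with $|W_t|\le 1$, $W_0 \equiv -1$, $W_1 \equiv 1$ on $\bar\Om$, and $\sup_t \Ee{W_t} \le 2\si\cH^n(\del\Om) + \et$. Because $\na \max\{u_0, W_t\}$ equals $\na u_0$ a.e.\ on $\{u_0 > W_t\}$ and $\na W_t$ a.e.\ on $\{W_t > u_0\}$, the gradient inequality $|\na u(t)|^2 \le |\na u_0|^2 + |\na W_t|^2$ yields $\Ee{u(t)} \le \Ee{u_0} + \Ee{W_t}$ for each $t$, and the endpoint conditions $u(0) = u_0$, $u(1)|_\Om \equiv 1$ follow from $W_0 \equiv -1 \le u_0$ and $W_1 \equiv 1 \ge u_0$ on $\bar\Om$.

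For $W_1$ I would pick $T = T(\et)$ large and a Lipschitz profile $\bar\Phi_T : \bbr \to [-1,1]$ with $\bar\Phi_T \equiv 1$ on $[0, \infty)$, $\bar\Phi_T \equiv -1$ on $(-\infty, -T]$, and 1-D energy $\int_{-T}^{0} \tfrac{1}{2}(\bar\Phi_T')^2 + W(\bar\Phi_T) \le 2\si + \et/(4\cH^n(\del\Om))$, obtained by truncating the exact 1-D Allen--Cahn minimizer (whose tails decay exponentially). Letting $d$ denote the signed distance to $\del\Om$ (negative inside $\Om$), I would set $W_1(x) := \bar\Phi_T(-d(x)/\ve)$. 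Then $W_1 \equiv 1$ on $\bar\Om$, $W_1 \equiv -1$ outside the $\ve T$-tube around $\del\Om$, and the coarea formula applied to $d$ together with smoothness of $\del\Om$ yields $\Ee{W_1} \le (\cH^n(\del\Om) + O(\ve T))(2\si + \et/(4\cH^n(\del\Om))) \le 2\si\cH^n(\del\Om) + \et/2$ as soon as $\ve \le \ve_1$ is small enough.

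The heart of the argument is the construction of the interpolating family $\{W_t\}$. The naive sliding of $\bar\Phi_T$ along $d/\ve$, or a convex combination of $-1$ and $W_1$, both fail because they force $W_t$ to take a fixed value in $(-1,1)$ on the positive-measure set $\Om$ at some intermediate $t$, which blows up $\int_\Om W(W_t)/\ve$ as $\ve \to 0$. My plan is to use the Morse function $f$ on $\tilde\Om$ to organize a sweepout: choose a regular value $s^* \in (2/3, 1)$ with $\{f \le s^*\} \supset \bar\Om$; the hypothesis that $f|_{[1/3, 2/3]}$ has no non-global local extrema forces the sublevel sets $\{f \le s\}$ to evolve across critical values only by middle-dimensional handle attachments, creating no new components and filling no holes. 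Working stepwise across the regular intervals and the critical values and gluing nested sub-paths via Lemmas \ref{l.trancation}--\ref{l.concatenation}, I would build $\{W_t\}$ whose "$+1$-phase" grows from empty at $t = 0$ through the sublevel sets $\{f \le s(t)\}$ and, in a final stage, is snapped from $\{f \le s^*\}$ onto $\bar\Om$; for this last step I would invoke Lemma \ref{l.deformation}, whose $L^1$-closeness hypothesis is verified using $\|1 - u_0\|_{L^1(\Om)} \le \tilde\et$ (the rest being absorbed by the max-with-$u_0$ structure) and whose local-energy hypotheses follow from Lemma \ref{l.no.conc.mass}. The main obstacle I anticipate is that intermediate level sets $\{f = s(t)\}$ can have $\cH^n$-measure much larger than $\cH^n(\del\Om)$: the resolution is that the max-with-$u_0$ structure effectively suppresses the interface outside a thin collar of $\del\Om$, and the dependence of $\ve_1$ and $\tilde\et$ on $f|_{\tilde\Om}$ records precisely how small each local error must be to traverse all the critical handles in $[1/3, s^*]$ through successive applications of the deformation lemma. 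Part (ii) follows by the symmetric construction, replacing $\max$ by $\min$ and $+1$ by $-1$ throughout.
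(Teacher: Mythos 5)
Your headline construction cannot work in the only regime where this lemma is actually needed. You ask for a continuous family $\{W_t\}$ with $W_0\equiv -1$, $W_1\equiv 1$ on $\bar\Om$, $|W_t|\le 1$ and $\sup_t E_{\ve}(W_t)\leq 2\si\cH^n(\del\Om)+\et$, and then use the crude subadditivity $E_{\ve}(\max\{u_0,W_t\})\leq E_{\ve}(u_0)+E_{\ve}(W_t)$. But if such a family existed, then $t\mapsto W_{1-t}\big|_{\Om}$ would be an admissible path for the Allen--Cahn width of $\Om$, so $\la_{\ve}(\Om)\leq\sup_t E_{\ve}(W_t,\Om)\leq 2\si\cH^n(\del\Om)+\et$. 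The lemma, however, is applied in Proposition \ref{p.intersection} to a \emph{good} set, where by \eqref{e.width.ineq.1} and \eqref{good.set} one has $\liminf_{\ve\to 0^+}\la_{\ve}(\Om)\geq 2\si\,\mathbb{W}(\Om)>8\si\cH^n(\del\Om)$, and with the value $\et=\ta=\tfrac{\si}{2}\cH^n(\del\Om)$ used there this is a contradiction for all small $\ve$. So no $u_0$-independent sweepout with that energy bound exists, and the plain estimate $E_{\ve}(u_0)+E_{\ve}(W_t)$ is too lossy: the construction must let $u_0$ itself carry the interior of $\Om$ and must count only the energy of the auxiliary interface on the region where it is actually visible under the max.

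Your proposed fix ("the max-with-$u_0$ structure suppresses the interface") names the right phenomenon but supplies no mechanism, and as stated it fails for adversarial $u_0$: $\|1-u_0\|_{L^1(\Om)}\leq\tilde\et$ does not control the energy of a \emph{fixed} sliding interface on $\{u_0<W_t<1\}$, since the bad set of $u_0$ (of measure $\le\tilde\et$) can sit exactly along one level set $f^{-1}(s)$, making that interface fully visible with energy $\approx 2\si\cH^n(f^{-1}(s))\gg 2\si\cH^n(\del\Om)$. The paper's proof resolves precisely this point by three devices you are missing: (a) the interfaces are chosen \emph{adapted to} $u_0$ --- spherical bubbles $\om_{i,\ve}^{r_i}$ over a cover of $\Om_{-t_0}$ by balls of radius $R$, with radii $r_i\in(2R,3R)$ selected through the coarea/averaging Lemma \ref{l.coarea}, which converts the $L^1$ hypothesis into the bound $E_{\ve}\bigl(\om_{i,\ve}^{r_i},\{v_{i-1}<\om_{i,\ve}^{r_i}<1\}\bigr)\leq\et/(2I)$; (b) the Morse-function sweepout $w_{2,\ve}$ is used only \emph{localized} to the balls $B(p_i,4R)$ (sandwiched between $v_{i-1}$ and $v_i$, which agree outside), where the no-concentration-of-mass estimate makes its energy $\le\et/2$, so the possibly huge areas of the level sets of $f$ never enter the accounting; and (c) the term $2\si\cH^n(\del\Om)$ is paid exactly once, at the end, by the family $w_{1,\ve}$ modeled on small translates of $\del\Om$ across a thin collar --- there is no passage through $\{f\le s^*\}$, which in your plan would again cost $\approx 2\si\cH^n(f^{-1}(s^*))$. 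Finally, Lemma \ref{l.deformation} cannot play the role you assign it: it requires the two endpoints to have $L^1$-close energy \emph{densities} and only yields $\min\{E_{\ve}(u_0),E_{\ve}(u_1)\}+9\de$, which is not the kind of estimate needed to climb from $u_0$ to $\equiv 1$ on $\Om$ at cost $2\si\cH^n(\del\Om)+\et$.
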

\begin{proof}
Let \(q: \bbr \ra \bbr\) be the unique solution of the following ODE.
\begin{equation}\label{e.ode}
\vp'(t)= \sqrt{2W(\vp(t))};\quad \vp(0)=0.
\end{equation}
For all \(t \in \bbr\), \(-1<q(t)<1\) and 
\begin{equation}\label{e.exp.decay}
\text{as } t \ra \pm \infty,\; (q(t)\mp 1) \text{ converges to zero exponentially fast.}
\end{equation}
\(q_{\ve}(t)=q(t/\ve)\) is a solution of the one dimensional Allen-Cahn equation
\[\ve^2\vp''(t)=W'(\vp(t))\]
with finite total energy:
\begin{equation}\label{e.energy.1D.soln}
\int_{-\infty}^{\infty}\left[\frac{\ve}{2}\left(q'_{\ve}(t)\right)^2+\frac{1}{\ve}W(q_{\ve}(t))\right] dt=2\si.
\end{equation}
For $\ve >0$, we define Lipschitz continuous function 
\begin{equation}\label{def.qte}
\qte(t)=
\begin{cases}
\qe(t) & \text{if } |t|\leq \sqrt{\ve};\\
\qe(\se)+\left(\frac{t}{\se}-1\right)(1-\qe(\se)) &\text{if } \se \leq t \leq 2 \se;\\
1 &\text{if } t\geq 2\se;\\
\qe(-\se)+ \left(\frac{t}{\se}+1\right)(1+\qe(-\se)) &\text{if } -2\se \leq t \leq -\se;\\
-1 &\text{if } t\leq -2\se.
\end{cases}
\end{equation}
For \(t\in \bbr\) and \(x\in N\), we set
\begin{equation}\label{e.d.del.omega}
d_1^t(x)=d_{\del \Om}(x)-t,\quad \text{ where }\quad d_{\del \Om}(x)=\begin{cases}
-d(x,\del \Om) & \text{ if } x\in \Om; \\
d(x,\del\Om) & \text{ if } x\notin\Om.
\end{cases}
\end{equation}
For \(t\in [1/3,1]\) and \(x\in N\), we set
\begin{equation}
d_2^t(x)=\begin{cases}
-d(x,f^{-1}(t)) & \text{ if } f(x)\leq t;\\
d(x,f^{-1}(t)) & \text{ if } f(x)\geq t.
\end{cases}
\end{equation}
Following \cite{G}*{Section 7 and Section 9}, we define the \cts maps \(w_{1,\ve}:\bbr\ra \HN\) and \(w_{2,\ve}:[0,1]\ra H^1(N)\) by 
\begin{equation}\label{e.def.w.1.eps}
w_{1,\ve}(t)=\qte\circ d_1^t;
\end{equation}
\begin{equation}\label{w.2.eps}
w_{2,\ve}(t)=\begin{cases}
\hat{q}_{\ve}\circ d^t_2 & \text{if } \frac{1}{3}\leq t \leq \frac{2}{3}; \\
1-3t(1-w_{2,\ve}(1/3)) & \text{if } 0 \leq t \leq \frac{1}{3};\\
-1+3(1-t)(1+w_{2,\ve}(2/3)) & \text{if } \frac{2}{3}\leq t\leq 1.
\end{cases}
\end{equation}
Since in the interval $[1/3,2/3]$, $f$ has no critical value which is a non-global local maxima or minima, \(t\mapsto f^{-1}(t)\) is \cts on \([1/3,2/3]\) in the Hausdorff topology. This implies that \(w_{2,\ve}\) is \cts (see \cite{G}*{Proposition 9.2}).

Let us fix \(\et>0\). From the argument in \cite{G}*{Section 9}, it follows that \te \(\ve',\;t_0>0\), depending on $\et$, \(\del \Om\) and \(\tilde{\Om}\), \st if \(0<\ve\leq\ve'\) and \(|t|\leq 2t_0\) then 
\begin{equation}\label{e.energy.w.1.eps}
\Ee{w_{1,\ve}(t)}\leq 2\si\cH^n(\del \Om)+\frac{\et}{2}.
\end{equation}
By the ``no concentration of mass" property (\cite{MN_ricci_positive}*{Lemma 5.2}), \tes \(0<R<t_0/5\), depending on \(\et,\;\Om,\;\tilde{\Om}\) and \(f\big|_{\tilde{\Om}}\),  \st
\begin{equation}
\cH^n\left(f^{-1}(t)\cap B(p,5R)\right)\leq \frac{1}{2\si}\frac{\et}{3},
\end{equation}
\fa \(t\in [1/3,2/3]\) and \(B(p,5R)\subset \Om\). Moreover, \(w_{2,\ve}(1/3)>0\) on \(N\) and \(w_{2,\ve}(2/3)<0\) on \(\Om\). As a consequence, by the results in \cite{G}*{Section 9}, \tes \(0<\ve''\leq R^2/4\), depending on \(\Om,\;\tilde{\Om}\) and \(f\big|_{\tilde{\Om}}\)  \st
\begin{equation}\label{e.energy.w.2.eps}
\Ee{\wte, B(p,4R)}\leq \frac{\et}{2},
\end{equation}
\fa \(t\in [0,1]\) and \(B(p,5R)\subset \Om\).
We define \(\ve_1=\min\{\ve',\ve''\}.\) By our definitions of \(R\) and \(\ve''\),
\begin{equation}\label{e.eps.1.bound}
2\sqrt{\ve_1}\leq R<\frac{t_0}{5}.
\end{equation}

Let us fix \(\ve\in (0,\ve_1]\). Using the notation of \eqref{e.d.del.omega}, let 
\begin{equation}\label{Om_r}
\Om_r=\{x\in N:d_{\del \Om}(x)\leq r\}.
\end{equation}
For \(r>0\) and for a fixed \(p\in\Om_{-t_0}\), we define \(\om^r_{\ve}:N\ra \bbr\) by
\[\om^r_{\ve}(x)=\qte\left(r-d_p(x)\right),\]
where \(d_p(x)=d(x,p).\) 

By \eqref{e.eps.1.bound}, \(B\left(p,3R+2\sqrt{\ve_1}\right)\subset \Om\). \tf it follows from Lemma \ref{l.coarea} that for \(u\in L^{\infty}(N)\) \w \(|u|\leq 1\),
\begin{align}
&\int_{2R}^{3R}\Ee{\om^r_{\ve},\{1>\om^r_{\ve}>u\}}\;dr\\
&\leq \int_{\Om}\int_{\{1>\qte>u(x)\}}e_{\ve}(\qte)(t)\;dt\;d\cH^{n+1}(x)\quad(\text{since }\qte\text{ is an odd function})\\
&=\int_{\Om}\int_{u(x)}^{1}\left[\frac{\ve}{2}\qte'\left(\qte^{-1}(s)\right)+\frac{1}{\ve}\frac{W(s)}{\qte'\left(\qte^{-1}(s)\right)}\right]\;ds\;d\cH^{n+1}(x).\label{e.integrand}
\end{align}
In \eqref{e.integrand}, \(\qte\) is thought of as a bijective map from \(\left[-2\sqrt{\ve},2\sqrt{\ve}\right]\) to \([-1,1]\). We claim that \tes \(C_0=C_0(W,\ve_1)>0\) \st the \(L^{\infty}([-1,1])\) norm of the integrand in \eqref{e.integrand} is bounded by \(C_0\). (In particular, \(C_0\) does not depend on $\ve$.) Indeed, the integrand is a non-negative, even function. If \(0\leq t<\sqrt{\ve},\) then
\begin{align}
& \frac{\ve}{2}\hat{q}'_{\ve}(t)+\frac{1}{\ve}\frac{W(\qte(t))}{\qte'(t)}\\
&=\frac{1}{2}q'\left(t/\ve\right)+\frac{W(q(t/\ve))}{q'(t/\ve)}\\
&=\sqrt{2W(q(t/\ve))}\; (\text{by }\eqref{e.ode})\\
&\leq\sqrt{2}\|W\|_{L^{\infty}([-1,1])}.\label{e.clm.1}
\end{align}
If \(\sqrt{\ve}<t<2\sqrt{\ve}\), using the fact that
\[\sup_{t\in [-1,1]}\frac{W(t)}{(1-t)^2}=C_1<\infty,\]
we obtain
\begin{align}
& \frac{\ve}{2}\qte'(t)+\frac{1}{\ve}\frac{W(\qte(t))}{\qte'(t)}\\
&\leq \frac{\sqrt{\ve}}{2}\left(1-q\left(\ve^{-1/2}\right)\right)+\frac{1}{\sqrt{\ve}}\frac{W\left(q\left(\ve^{-1/2}\right)\right)}{\left(1-q\left(\ve^{-1/2}\right)\right)}\\
&\leq \left(\frac{\sqrt{\ve}}{2}+\frac{C_1}{\sqrt{\ve}}\right)\left(1-q\left(\ve^{-1/2}\right)\right).\label{e.clm.2}
\end{align}
By \eqref{e.exp.decay}, the expression in \eqref{e.clm.2} is bounded by some constant \(C_2=C_2(\ve_1,C_1)\). Thus our claim follows from \eqref{e.clm.1} and \eqref{e.clm.2}. \eqref{e.integrand}, together with the claim, implies that
\begin{equation}\label{e.coarea'}
\int_{2R}^{3R}\Ee{\om^r_{\ve},\{1>\om^r_{\ve}>u\}}\;dr\leq C_0 \|1-u\|_{L^1(\Om)}.
\end{equation}

We choose a covering 
\begin{equation}\label{e.cover'}
\Om_{-t_0}=\bigcup_{i=1}^IB(p_i,R);
\end{equation}
each \(p_i\in \Om_{-t_0}\) so that (by \eqref{e.eps.1.bound}) \(B(p_i,5R)\subset \Om.\)
To prove part (i) of Lemma \ref{l.deformation.a}, we set 
\begin{equation}\label{e.def.eta.tilde}
\tilde{\et}=\frac{\et R}{2C_0I},
\end{equation}
where $C_0$ is as in the above claim. Let \(u_0\) be as in the statement of Lemma \ref{l.deformation.a}, part (i). We inductively define a sequence \(\{v_i\}_{i=0}^I\), 
\[-1\leq v_0\leq v_1\leq\dots\leq v_I\leq 1,\]
as follows. Set \(v_0=u_0.\) \sps \(v_k\) has been defined for \(0\leq k\leq i-1\) so that
\[u_0=v_0\leq v_1\leq\dots \leq v_{i-1}\leq 1;\]
hence
\begin{equation}\label{e.1-v_i}
\|1-v_{i-1}\|_{L^1(\Om)}\leq \|1-u_0\|_{L^1(\Om)}\leq \tilde{\et}.
\end{equation}
Let \(\om_{i,\ve}^r(x)=\qte\left(r-d_{p_i}(x)\right)\). By \eqref{e.coarea'}, \eqref{e.1-v_i} and \eqref{e.def.eta.tilde}, \tes \(r_i\in (2R,3R)\) \st
\begin{equation}\label{e.energy.omega_i}
\Eee{\om_{i,\ve}^{r_i},\{v_{i-1}<\om_{i,\ve}^{r_i}<1\}}\leq \frac{\et}{2I}.
\end{equation}
We define
\begin{equation}\label{e.def.v_i.bis}
v_i=\max \{v_{i-1},\om_{i,\ve}^{r_i}\}=\begin{cases}
v_{i-1} &\text{on } \{v_{i-1}\geq \om_{i,\ve}^{r_i}\};\\
\om_{i,\ve}^{r_i} &\text{on } \{v_{i-1}< \om_{i,\ve}^{r_i}\}.
\end{cases}
\end{equation}
It follows from \eqref{e.def.v_i.bis} that \(v_{i-1}\leq v_i\leq 1\). Moreover, since \(\om_{i,\ve}^{r_i}\equiv -1\) on \(N\setminus B(p_i,4R)\),
\begin{equation}\label{e.v_i=v_{i-1}}
v_i=v_{i-1}\text{ on } N\setminus B(p_i,4R).
\end{equation}
Thus we have obtained the sequence \(\{v_i\}_{i=0}^I\) with $v_0=u_0$ and we set \(v_I=\bar{u}\). As \(\om_{i,\ve}^{r_i}\equiv 1\) on $B(p_i,R)$, using \eqref{e.def.v_i.bis} and \eqref{e.cover'}, one can prove by induction that
\begin{equation}\label{e.u.bar.1}
\bar{u}\big|_{\Om_{-t_0}}\equiv 1.
\end{equation}

By \eqref{e.def.v_i.bis} and \eqref{e.energy.omega_i}, 
\begin{equation}\label{v_i.neq v_{i-1}}
\Eee{v_i,\{v_i\neq v_{i-1}\}}=\Eee{\om_{i,\ve}^{r_i},\{v_{i-1}<\om_{i,\ve}^{r_i}<1\}}\leq \frac{\et}{2I}.
\end{equation}
Thus
\begin{align}
&\Ee{v_i}=\Ee{v_{i-1}}+\Ee{v_i,\{v_i\neq v_{i-1}\}}\leq \Ee{v_{i-1}}+\frac{\et}{2I}\\
&\implies \Ee{v_i}\leq \Ee{u_0}+\frac{\et i}{2 I}\quad \forall \;0\leq i\leq I.\label{e.energy.v_i'}
\end{align}
For \(1\leq i\leq I\), let \(\be_i:[0,1]\ra \HN\) be defined by
\begin{equation}\label{e.beta}
\be_i(t)=\max\{v_{i-1},\min\{v_i,-w_{2,\ve}(t)\}\}=\begin{cases}
v_{i-1} &\text{on } \{-w_{2,\ve}(t)\leq v_{i-1}\};\\
-w_{2,\ve}(t)&\text{on } \{v_{i-1}<-w_{2,\ve}(t)<v_i\};\\
v_i &\text{on } \{v_i\leq -w_{2,\ve}(t)\}.\\
\end{cases}
\end{equation}
Since \(w_{2,\ve}(0)\equiv 1\) and \(w_{2,\ve}(1)\equiv -1\), \(\be_i(0)=v_{i-1}\) and \(\be_i(1)=v_i\). Moreover, by \eqref{e.v_i=v_{i-1}},
\begin{equation}
\be_i(t)=v_{i-1}=v_i\text{ on } N\setminus B(p_i,4R).
\end{equation}
\tf using \eqref{e.beta}, \eqref{e.energy.v_i'}, \eqref{v_i.neq v_{i-1}} and \eqref{e.energy.w.2.eps}, one obtains
\begin{align}
\Ee{\be_i(t)}&\leq \Ee{v_{i-1}}+\Ee{v_i,\{v_i\neq v_{i-1}\}}+\Ee{-w_{2,\ve}(t),B(p_i,4R)}\\
&\leq \Ee{u_0}+\frac{\et (i-1)}{2 I}+\frac{\et }{2 I}+\frac{\et}{2}\\
&\leq\Ee{u_0}+\et.\label{e.energy.beta_i}
\end{align}
Concatenating all the \(\beta_i\)'s we get a map \(\be':[0,1]\ra \HN\) \st \(\be'(0)=u_0\), \(\be'(1)=\bar{u}=v_I\) and
\begin{equation}
\sup_{t\in [0,1]}\Ee{\be'(t)}\leq \Ee{u_0}+\et.\label{e.energy.beta'}
\end{equation}
Let \(\be'':[-2t_0,t_0]\ra \HN\) be defined by
\begin{equation}
\be''(t)=\max\{\bar{u},-w_{1,\ve}(t)\}.
\end{equation}
Then \(\be''(-2t_0)=\bar{u}\) as
\(\bar{u}\equiv 1\text{ on }\Om_{-t_0}\;(\eqref{e.u.bar.1})\text{ and } w_{1,\ve}(-2t_0)\equiv 1 \text{ on }N\setminus\Om_{-t_0}\;(\text{by }\eqref{e.eps.1.bound}).\)
Moreover, \(\be''(t_0)\big|_{\Om}\equiv 1\) as by \eqref{e.eps.1.bound}, \(w_{1,\ve}(t_0)\big|_{\Om}\equiv -1\). Using \eqref{e.energy.w.1.eps} and \eqref{e.energy.v_i'}, we conclude that \fa \(t \in [-2t_0,t_0]\),
\[\Ee{\be''(t)}\leq \Ee{\bar{u}}+\Ee{-w_{1,\ve}(t)}\leq \Ee{u_0}+2\si\cH^n(\del \Om)+\et.\]
Finally, the map \(u\) in Lemma \ref{l.deformation.a} part (i) is obtained by concatenating \(\be'\) and \(\be''\). This finishes the proof of part (i) of Lemma \ref{l.deformation.a}; part (ii) of the lemma can be deduced from part (i) by replacing \(u_0\) by \(-u_0\).
\end{proof}
The next lemma is motivated by the properties of the isoperimetric profile of a compact \Rm manifold (see \cite{CL}*{Lemma 7.1 (2)}). 
\begin{lem}\label{l.deformation.b}
Let \(\Om\) be a compact \Rm \mf (not necessarily closed). For all \(\et_1>0\), \te \(\ve_2,\;\et_2>0\), depending on \(\Om\) and \(\et_1\), \st the following holds. If \(0<\ve\leq \ve_2\) and \(u\in H^1(\Om)\) satisfies \(\|u\|_{L^{\infty}(\Om)}\leq 1\) and \(\min\{\|1-u\|_{L^1(\Om)}, \|1+u\|_{L^1(\Om)}\} > \et_1\), then \(E_{\ve}(u,\Om)> \et_2\).
\end{lem}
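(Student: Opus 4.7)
The plan is to argue by contradiction. Suppose the lemma fails for some $\eta_1 > 0$: then there exist sequences $\varepsilon_i \to 0^+$ and $u_i \in H^1(\Omega)$ with $\|u_i\|_{L^\infty(\Omega)} \leq 1$ and $\min\{\|1-u_i\|_{L^1(\Omega)},\|1+u_i\|_{L^1(\Omega)}\} > \eta_1$, yet $E_{\varepsilon_i}(u_i,\Omega) \to 0$. I will derive a contradiction by extracting, along a subsequence, $L^1$-convergence of $u_i$ to $+1$ or to $-1$.

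First, $\int_\Omega W(u_i) \leq \varepsilon_i\, E_{\varepsilon_i}(u_i,\Omega) \to 0$. Since $W \geq 0$ on $[-1,1]$ vanishes only at $\pm 1$ (and is positive elsewhere), along a subsequence $W(u_i) \to 0$ a.e., and hence $u_i(x) \to +1$ or $u_i(x) \to -1$ for a.e.\ $x \in \Omega$.

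Second, I invoke the Modica--Mortola trick. Set $\tilde u_i = F \circ u_i$, with $F$ from \eqref{e.def.F}. Then $|\tilde u_i| \leq \sigma/2$, and Young's inequality gives
\[
|\nabla \tilde u_i| \;=\; |\nabla u_i|\sqrt{W(u_i)/2} \;\leq\; \tfrac{1}{2}\!\left(\varepsilon_i\tfrac{|\nabla u_i|^2}{2} + \tfrac{W(u_i)}{\varepsilon_i}\right),
\]
so $\|\nabla \tilde u_i\|_{L^1(\Omega)} \leq \tfrac{1}{2} E_{\varepsilon_i}(u_i,\Omega) \to 0$. Thus $\{\tilde u_i\}$ is bounded in $BV(\Omega)$ with total variation tending to zero. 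By BV compactness on the compact Riemannian manifold $\Omega$ (taken connected, as is standard convention --- on a disconnected $\Omega$ the lemma fails, e.g.\ via $u \equiv +1$ on one component and $u \equiv -1$ on another), a further subsequence converges in $L^1$ to some $\tilde u_\infty$. By lower semicontinuity of the total variation, $\tilde u_\infty$ has vanishing total variation, so $\tilde u_\infty \equiv c$ is constant.

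Combining this $L^1$-limit with the a.e.\ pointwise limit $\tilde u_i \to F(\pm 1) = \pm \sigma/2$ from the first step, the constant $c$ must equal $\sigma/2$ (in which case $u_i \to 1$ a.e.\ on $\Omega$) or $-\sigma/2$ (in which case $u_i \to -1$ a.e.); the complementary set has measure zero. Dominated convergence, using the uniform bound $|u_i| \leq 1$, then yields $u_i \to +1$ or $u_i \to -1$ in $L^1(\Omega)$, so one of $\|1-u_i\|_{L^1(\Omega)}, \|1+u_i\|_{L^1(\Omega)}$ tends to zero, contradicting the hypothesis $> \eta_1$. No single step is delicate; the essential ingredient is the Modica--Mortola/BV compactness package applied to functionals whose total variation vanishes.
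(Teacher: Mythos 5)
Your proposal is correct and is essentially the paper's own argument: argue by contradiction, apply the Modica--Mortola transformation $\tilde u_i=F\circ u_i$ to get a $BV$ bound with vanishing total variation, use $BV$ compactness to extract an $L^1$/a.e.\ limit which must be constant, and use the potential term of the vanishing energy to force that constant to be $\pm\sigma/2$ (i.e.\ $u_i\to\pm1$), contradicting the $L^1$ hypothesis. The only cosmetic difference is that you pin down the constant via a.e.\ convergence of $W(u_i)$ to $0$ while the paper passes to the limit in $\int_\Omega W(u_{i_k})$ by dominated convergence; your aside on connectedness of $\Omega$ matches what the paper implicitly assumes when concluding the limit is constant.
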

\begin{proof}
We assume by contradiction that \te sequences \(\{u_i\}_{i=1}^{\infty}\subset H^1(\Om)\) \w \(|u_i|\leq 1\) \fa \(i\) and \(\{\al_{i}\}_{i=1}^{\infty}\subset (0,\infty)\) \w \(\al_i \ra 0\) \st 
\begin{equation}\label{L^1}
\min\{\|1-u_i\|_{L^1(\Om)}, \|1+u_i\|_{L^1(\Om)}\} > \et_1\quad \forall\; i\in \bbn,
\end{equation}
and
\begin{equation}\label{E.al.1}
E_{\al_i}(u_i,\Om)\ra 0.
\end{equation}
Let \(F:[-1,1]\ra [-\si/2,\si/2]\) be as defined in \eqref{e.def.F} and \(v_i=F\circ u_i.\) As argued in \cite{HT}*{Section 2.1}, for all \(i\), \(|v_i|\leq \si/2\) and
\begin{equation}\label{E.al.2}
\int_{\Om}\md{\na v_i}\leq \frac{1}{2}E_{\al_i}(u_i,\Om).
\end{equation}
\tf \tes a subsequence \(\{v_{i_k}\}\subset \{v_i\}\) and \(v_{\infty}\in BV(\Om)\) \st 
\begin{equation}
v_{i_k} \ra v_{\infty}\text{ in } L^1(\Om) \text{ and pointwise a.e.}
\end{equation}
and (using \eqref{E.al.1} and \eqref{E.al.2})
\begin{equation}
\int_{\Om}\md{Dv_{\infty}}\leq \liminf_{k\ra \infty}\int_{\Om}\md{\na v_{i_k}}=0.
\end{equation}
Thus \(v_{\infty}\) is a constant function. Denoting \(u_{\infty}=F^{-1}(v_{\infty})\), by the dominated convergence theorem,
\begin{equation}
u_{i_k} \ra u_{\infty}\text{ pointwise a.e. and in } L^1(\Om).
\end{equation}
Moreover, by \eqref{E.al.1},
\[\int_{\Om}W(u_{\infty})= \lim_{k\ra \infty}\int_{\Om}W(u_{i_k})=0.\]
Since \(u_{\infty}\) is a constant function, either \(u_{\infty}\equiv 1\) or \(u_{\infty}\equiv -1\). However, this contradicts the assumption \eqref{L^1}.
\end{proof}
\section{Proof of Theorem \ref{t:main.thm}}
Let \(N\) be a closed \Rm manifold and \(\Om\subset N\) be an open set with smooth boundary \(\del \Om\). For \(\ve>0\), the \textit{\(\ve\)-Allen-Cahn width} of \(\Om\), which we denote by \(\la_{\ve}(\Om)\), is defined as follows \cite{G}. Let \(\sA\) be the set of all \cts maps \(\ze:[0,1]\ra H^1(\Om)\) \st \(\ze(0)\equiv 1\) and $\ze(1)\equiv-1$. 
Then 
\begin{equation}
\la_{\ve}(\Om)=\inf_{\ze\in \sA}\sup_{t\in [0,1]}\Ee{\ze(t),\Om}.
\end{equation}
It follows from \cite{G}*{Section 8} that
\begin{equation}\label{e.width.ineq.1}
\mathbb{W}(\Om)\leq \frac{1}{2\si}\liminf_{\ve\ra 0^{+}}\la_{\ve}(\Om),
\end{equation}
where \(\mathbb{W}(\Om)\) is as defined in \eqref{e.def.rel.width}. Motivated by \cite{CL}*{Section 2.2}, we also make the following definition.
\begin{defn}
Let \(\sB\) be the set of all \cts maps \(\ze:[0,1]\ra H^1(N)\) \st \(\ze(0)\big|_{\Om}\equiv 1\) and $\ze(1)\big|_{\Om}\equiv-1$. For $\ve>0$, we define
\begin{equation}\label{la.tld.ep}
\tilde{\la}_{\ve}(\Om)= \inf_{\ze\in \sB}\sup_{t\in [0,1]}\Ee{\ze(t),N}.
\end{equation}
\end{defn}
Given \(\tilde{\ze}\in \sB\) one can define \(\ze\in \sA\) by \(\ze(t)=\tilde{\ze}(t)\big|_{\Om};\)
hence, \fa \(\ve>0\),
\begin{equation}\label{e.width.ineq.2}
\la_{\ve}(\Om)\leq \tilde{\la}_{\ve}(\Om).
\end{equation}

The following Proposition \ref{p.intersection} is the Allen-Cahn analogue of \cite{CL}*{Proposition 2.1}.

\begin{pro}\label{p.intersection}
Let \(N\) be a closed \Rm manifold and \(\Om\subset N\) be a good set (as defined in Section \ref{s.2.2}). \sps \(f:N\ra [1/3,\infty)\) is a Morse \fn so that in the interval $[1/3,2/3]$, $f$ has no critical value which is a non-global local maxima or minima;
\[\min_N f=1/3;\quad \max_N f>1;\quad \Om\subset\subset f^{-1}\left([1/3,2/3)\right); \quad 1 \text{ is a regular value of }f.\]
We set 
$$\tilde{\Om}=f^{-1}\left([1/3,1]\right).$$
Then there exist \(\ve^*,\et^*>0\), depending on \(\Om,\;\tilde{\Om}\) and \(f\big|_{\tilde{\Om}}\), such that \fa \(0<\ve\leq \ve^*\) the following condition is satisfied. For every \(\ze\in \sB\), \tes \(t^0\in [0,1]\) \st 
\begin{equation}
\Ee{\ze(t^0)}\geq \tilde{\la}_{\ve}(\Om) \quad \text { and }\quad \Ee{\ze(t^0),\Om}\geq \et^*.
\end{equation}	
\end{pro}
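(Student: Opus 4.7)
I will argue by contradiction, adapting the strategy of \cite{CL}*{Proposition 2.1} to the Allen-Cahn setting. Suppose there exists $\ze \in \sB$ violating the conclusion, i.e., for every $t \in [0,1]$, either $\Ee{\ze(t)} < \lte(\Om)$ or $\Ee{\ze(t), \Om} < \et^*$. My goal is then to build a new path $\hat{\ze} \in \sB$ with $\sup_t \Ee{\hat{\ze}(t)} < \lte(\Om)$, contradicting the infimum definition \eqref{la.tld.ep}.

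To fix the parameters, I combine the good-set inequality $\mathbb{W}(\Om) > 4\cH^n(\del\Om)$ with \eqref{e.width.ineq.1} and \eqref{e.width.ineq.2} to obtain a constant $c_0 = c_0(\Om) > 0$ such that $\lte(\Om) > 8\si\cH^n(\del\Om) + c_0$ for all sufficiently small $\ve$. Fix $\et \in (0, c_0/10)$; apply Lemma \ref{l.deformation.a} to obtain $(\ve_1, \tilde{\et})$, and Lemma \ref{l.deformation.b} with $\et_1 = \tilde{\et}$ to obtain $(\ve_2, \et_2)$; set $\ve^* = \min\{\ve_1, \ve_2\}$ and $\et^* = \et_2$. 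Under the contradiction hypothesis, the closed set $K = \{t : \Ee{\ze(t), \Om} \geq \et^*\}$ lies inside $\{t : \Ee{\ze(t)} < \lte(\Om)\}$, and by compactness $\max_K \Ee{\ze} < \lte(\Om)$. On $K^c$, Lemma \ref{l.deformation.b} forces either $\nm{1 - \ze(t)}_{L^1(\Om)} \leq \tilde{\et}$ or $\nm{1 + \ze(t)}_{L^1(\Om)} \leq \tilde{\et}$; for $\tilde{\et}$ small this decomposes $K^c = U_+ \sqcup U_-$ into disjoint open sets with $0 \in U_+$ and $1 \in U_-$.

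Next, let $t_1$ be the first exit time from the component of $U_+$ containing $0$ and $t_2$ the first subsequent entry time into $U_-$. Then $\ze((t_1, t_2)) \subset K$, and by continuity $\nm{1 - \ze(t_1)}_{L^1(\Om)} \leq \tilde{\et}$ and $\nm{1 + \ze(t_2)}_{L^1(\Om)} \leq \tilde{\et}$. Applying Lemma \ref{l.deformation.a}(i) at $\ze(t_1)$ and Lemma \ref{l.deformation.a}(ii) at $\ze(t_2)$, I obtain deformations $\al, \be$ ending at functions $u^+, u^- \in \HN$ with $u^{\pm}|_\Om \equiv \pm 1$ and with sup-energies bounded by $\Ee{\ze(t_i)} + 2\si\cH^n(\del\Om) + \et$. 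Concatenating the reversal of $\al$, then $\ze|_{[t_1, t_2]}$, then $\be$ produces the candidate $\hat{\ze} \in \sB$.

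The crux of the argument is the final estimate $\sup_t \Ee{\hat{\ze}(t)} < \lte(\Om)$. The middle segment is controlled by $\max_K \Ee{\ze} < \lte(\Om)$, but the outer segments contribute $\Ee{\ze(t_i)} + 2\si\cH^n(\del\Om) + \et$, where $\Ee{\ze(t_i)}$ can a priori be close to $\lte(\Om)$; this is the main obstacle. Resolving it demands a mass-balance accounting parallel to the factor-$4$ argument of \cite{CL}*{Proposition 2.1}: the good-set slack $c_0$, combined with a careful refinement of the choice of $t_1, t_2$ (for instance by replacing them with nearby times in $U_+$ and $U_-$ where the total energy is controlled by the good-set bound rather than by $\lte(\Om)$ itself), must ensure $\Ee{\ze(t_i)} \leq \lte(\Om) - 2\si\cH^n(\del\Om) - \et$. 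This is feasible precisely because $\lte(\Om)$ exceeds $8\si\cH^n(\del\Om) + c_0$, giving enough room to absorb the two boundary-transfer costs of the deformations, and it yields the required strict inequality and hence the contradiction.
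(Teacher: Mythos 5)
Your overall framework (contradiction, splitting $[0,1]$ by the size of $\Ee{\ze(t),\Om}$, using Lemma \ref{l.deformation.b} to sort the low-energy-in-$\Om$ times into a ``$+1$ side'' and a ``$-1$ side'', and capping off with Lemma \ref{l.deformation.a} to produce a competitor in $\sB$) matches the skeleton of the paper's argument, and you have correctly identified the crux. But the crux is exactly where your proof stops being a proof. The outer segments of your candidate path have energy bounded only by $\Ee{\ze(t_i)}+2\si\cH^n(\del\Om)+\et$, and there is no way to fix this by ``replacing $t_1,t_2$ with nearby times where the total energy is controlled by the good-set bound'': the contradiction hypothesis gives no upper bound whatsoever on $\Ee{\ze(t)}$ (over all of $N$) at times where $\Ee{\ze(t),\Om}$ is small, and at times in your set $K$ the total energy, while below $\tilde{\la}_{\ve}(\Om)$, can be arbitrarily close to it. The good-set inequality bounds $\tilde{\la}_{\ve}(\Om)$ from below; it says nothing about the energy of $\ze(t)$ away from $\Om$, so the slack $c_0$ cannot ``absorb'' the boundary-transfer cost. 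As written, the final inequality $\Ee{\ze(t_i)}\leq \tilde{\la}_{\ve}(\Om)-2\si\cH^n(\del\Om)-\et$ is asserted, not proved, and in general it is false for the times you select.

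The paper's proof exists precisely to get around this point, and it does so with machinery absent from your proposal. After extracting the interval $[a,b]$ (your $[t_1,t_2]$), it replaces $h\big|_{[a,b]}$ by a \emph{nested} path $\htl$ via Proposition \ref{p.approx.by.nested}, with sup-energy $\leq \tilde{\la}_{\al}(\Om)-\de/2$. It then uses the wall function $\we=w_{1,\al}(-2\sqrt{\al})$, the cut-off map $T$ of Lemma \ref{l.map.T}, the surgery operator $\Tht$ of Lemma \ref{l.tht}, and — crucially — a second application of the min-max definition (Lemma \ref{l.htl.t*}) to find a time $t^*$ at which the energy of $\htl(t^*)$ \emph{localizes}: outside the region trapped between $\pm\we$ and $\pm\ell$ it is at most $\Ea{\we}+2\ta_2$. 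This is what lets one manufacture new endpoints $h_0^{\bullet}\geq \htt\geq h_1^{\bullet}$ whose total energies are bounded by $4\si\cH^n(\del\Om)+6\ta_2$ (a fixed multiple of $\cH^n(\del\Om)$, not something near $\tilde{\la}_{\al}(\Om)$), while the nestedness of $\htl$, the minimization Lemma \ref{l.minimization.prob}, and the truncation Lemma \ref{l.trancation} connect $h_0^{\bullet}$ to $h_1^{\bullet}$ without raising the sup-energy above $\tilde{\la}_{\al}(\Om)-\de/2$. Only then are the caps from Lemma \ref{l.deformation.a} cheap enough ($\leq 7\si\cH^n(\del\Om)<\la_{\al}(\Om)$) to close the contradiction. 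Without the nested approximation, the localization lemma, and the $\we$/$\Tht$ surgeries — or some substitute mechanism that cuts off the energy of the path outside $\Om$ — your argument cannot be completed.
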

\begin{rmk}\label{r.intersection}
The constants \(\ve^*\) and \(\et^*\) in the above Proposition \ref{p.intersection} depend on the ambient \Rm metric restricted to \(\tilde{\Om}\). (By our hypothesis, \(\del\tilde{\Om}\) is smooth.) Let us fix a \Rm \mt \(g_0\) on \(\tilde{\Om}\). If \(g'\) is an arbitrary \Rm \mt on \(N\), from the proofs of Proposition \ref{p.intersection}, Lemmas \ref{l.deformation.a} and \ref{l.deformation.b}, it follows that there exists \(\varrho>0\), depending on \(g_0\) and \(g'\big|_{\tilde{\Om}}\), \st the following holds. One can choose \(\ve^*\) and \(\et^*\) in Proposition \ref{p.intersection} in such a way that the proposition holds for all \Rm metrics \(g''\) on \(N\) satisfying
\begin{equation}
\nm{g'\big|_{\tilde{\Om}}-g''\big|_{\tilde{\Om}}}_{C^2(\tilde{\Om},g_0)}<\varrho.
\end{equation} 
\end{rmk}
\begin{proof}[Proof of Proposition \ref{p.intersection}]
The proof will be presented in four parts.

\textbf{Part 1.} Let
\begin{equation}\label{tau}
\tau=\frac{\si}{2}\cH^n(\del\Om).
\end{equation}
We set \(\et=\ta\) in Lemma \ref{l.deformation.a} and choose \(\ve^*_1>0\) and 
\begin{equation}\label{e.tau_1}
0<\ta_1< \cH^{n+1}(\Om)
\end{equation}
so that Lemma \ref{l.deformation.a} holds for \(\et=\ta\), \(\ve_1=\ve_1^*\) and \(\tilde{\et}=3\ta_1\). Next, we set \(\et_1=\ta_1\) in Lemma \ref{l.deformation.b} and choose \(\ve^*_2>0\) and 
\begin{equation}\label{tau2}
0<\ta_2\leq\frac{\si}{12}\cH^n(\del\Om)
\end{equation}
so that Lemma \ref{l.deformation.b} holds for \(\et_1=\ta_1\), \(\ve_2=\ve^*_2\) and \(\et_2=\ta_2\). Let us define \(\et^*=\ta_2\). We also define \(\ve^*\) to be a positive real number so that the following conditions are satisfied.
\begin{itemize}
	\item \(\ve^*\leq \min\{\ve^*_1,\ve^*_2\}\).
	\item Let
\begin{equation}\label{e.w.eps}
w_{\ve}= w_{1,\ve}(-2\sqrt{\ve}),
\end{equation}
where \(w_{1,\ve}\) is as defined in \eqref{e.def.w.1.eps}. Then, \fa \(0<\ve\leq \ve^*\),
\begin{equation}\label{e.En.eps}
\Ee{w_{\ve}}\leq 2\si \cH^n(\del\Om)+\ta_2.
\end{equation}
(For this item one needs to use \eqref{e.energy.w.1.eps}.)
\item For all \(0<\ve\leq \ve^*\), using the notation of \eqref{Om_r},
\begin{equation}\label{e.vol.eps}
\cH^{n+1}\left(\Om \setminus \Om_{-4\sqrt{\ve}}\right)\leq \frac{\ta_1}{2}.
\end{equation}
\item For all \(0<\ve\leq \ve^*\),
\begin{equation}\label{e.eps.good}
\frac{1}{2\si}\la_{\ve}(\Om)>\frac{7}{2}\cH^n(\del\Om).
\end{equation}
(For this item one needs \eqref{e.width.ineq.1} and the hypothesis \eqref{good.set} that \(\Om\) is a good set.)
\end{itemize}
\bigskip

We will show that Proposition \ref{p.intersection} holds for the above choices of \(\ve^*\) and \(\et^*\). Let us assume by contradiction that \te \(\al\in (0,\ve^*]\) and \(h\in \sB\) \st 
\begin{equation}\label{e.contr}
\text{for }t\in [0,1],\text{ if }\Ea{h(t)}\geq \tilde{\la}_{\al}(\Om)\;\text{  then  }\; \Ea{h(t),\Om}<\et^*=\ta_2.
\end{equation}
Without loss of generality, we can assume that 
\begin{equation}\label{h(t).bounded}
|h(t)|\leq 1\quad \forall\; t\in [0,1].
\end{equation}
Indeed, if 
\[\hat{h}(t)=\min\left\{1,\max\{-1,h(t)\}\right\},\]
then \fa \(t\in [0,1]\), \(|\hat{h}(t)|\leq 1\) and \(e_{\ve}(\hat{h}(t))\leq e_{\ve}(h(t))\). \tf 
\[\Ea{\hat{h}(t)}\geq \tilde{\la}_{\al}(\Om)\implies \Ea{h(t)}\geq \tilde{\la}_{\al}(\Om)\implies \et^*>\Ea{h(t),\Om}\geq \Ea{\hat{h}(t),\Om}.\]

To prove Proposition \ref{p.intersection}, we will show that the existence of such \(h\in \sB\) and \(\al\in (0,\ve^*]\) imply there exists \(\ga\in \sB\) satisfying \(\sup\limits_{t\in [0,1]}\Ea{\ga(t)}< \tilde{\la}_{\al}(\Om)\).
\medskip

\textbf{Part 2.} Let \(h\) and \(\al\) be as defined above in \eqref{e.contr} and \eqref{h(t).bounded}.

\begin{lem}\label{l.cont}
There exist \(0<a<b<1\) \st the following conditions are satisfied.
\begin{itemize}
	\item \(\Ea{h(a),\Om}=\ta_2=\Ea{h(b),\Om}\).
	\item \(\nm{1-h(a)}_{L^1(\Om)}\leq \ta_1\) and \(\nm{1+h(b)}_{L^1(\Om)}\leq \ta_1\).
	\item \(\Ea{h(t),\Om}\geq\ta_2\) \fa \(t\in [a,b]\).
\end{itemize}
\end{lem}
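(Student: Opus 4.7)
The plan is to set up a binary ``type'' classification on the closed set $K:=\{t\in[0,1]:\Ea{h(t),\Om}\leq\ta_2\}$ via Lemma \ref{l.deformation.b}, and then choose $a,b$ so they lie in opposite types while the open interval $(a,b)$ sits entirely in the high-energy region $K^c$.

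First, since $|h(t)|\leq 1$, the two nonnegative functions $1-h(t)$ and $1+h(t)$ on $\Om$ sum identically to $2$, so
\[\nm{1-h(t)}_{L^1(\Om)}+\nm{1+h(t)}_{L^1(\Om)}=2\cH^{n+1}(\Om)\qquad\forall\,t\in[0,1].\]
By \eqref{e.tau_1}, $2\ta_1<2\cH^{n+1}(\Om)$, so at most one of $\nm{1-h(t)}_{L^1(\Om)}\leq\ta_1$ and $\nm{1+h(t)}_{L^1(\Om)}\leq\ta_1$ can hold. On the other hand, the contrapositive of Lemma \ref{l.deformation.b} (applied with $\et_1=\ta_1$ and $\et_2=\ta_2$; its hypotheses are met because $\al\leq\ve^*\leq\ve_2^*$ and $|h(t)|\leq 1$ by \eqref{h(t).bounded}) forces at least one of the two inequalities whenever $t\in K$. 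Thus on $K$ \emph{exactly} one holds, giving a well-defined type $\in\{+,-\}$.

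Next, the type is constant on each connected component of $K$: the continuous function $t\mapsto\nm{1-h(t)}_{L^1(\Om)}$ takes values in the \emph{disjoint} union $[0,\ta_1]\cup[2\cH^{n+1}(\Om)-\ta_1,2\cH^{n+1}(\Om)]$ on $K$, so on a connected subset it stays inside one of the two intervals. Since $h(0)\big|_\Om\equiv 1$ and $h(1)\big|_\Om\equiv -1$, the component of $K$ containing $0$ is type $+$ and that containing $1$ is type $-$; they are in particular distinct, so $K\neq [0,1]$. Let $A:=K\cap\{t:\nm{1-h(t)}_{L^1(\Om)}\leq\ta_1\}$ and $B:=K\cap\{t:\nm{1+h(t)}_{L^1(\Om)}\leq\ta_1\}$ be the unions of type-$+$ and type-$-$ components respectively; both are closed (intersections of closed sets), disjoint, with $0\in A$ and $1\in B$. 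Define
\[a:=\sup A,\qquad b:=\inf\{t>a:t\in B\}.\]
Closedness of $A$ and $B$ gives $a\in A$, so $\nm{1-h(a)}_{L^1(\Om)}\leq\ta_1$, and $b\in B$, so $\nm{1+h(b)}_{L^1(\Om)}\leq\ta_1$. Since $A\cap B=\emptyset$ and $1\in B$, we have $b>a$; continuity of $\Ea{h(\cdot),\Om}$ near $0$ and $1$ (where it equals $0<\ta_2$) shows $0<a$ and $b<1$. For any $t\in(a,b)$, $t\notin A$ (by the definition of $\sup$) and $t\notin B$ (by the definition of $b$), so $t\notin K$, meaning $\Ea{h(t),\Om}>\ta_2$. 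Continuity then forces $\Ea{h(a),\Om}=\Ea{h(b),\Om}=\ta_2$, which yields all three bullets.

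The delicate point is the third bullet -- $\Ea{h(t),\Om}\geq\ta_2$ on \emph{all} of $[a,b]$, not merely at the endpoints. A naive choice such as $a=\inf\{t:\Ea{h(t),\Om}\geq\ta_2\}$ and $b=\sup\{t:\Ea{h(t),\Om}\geq\ta_2\}$ would satisfy the first two bullets (the disjoint-intervals argument applied to the components of $K$ containing $0$ and $1$ produces the $L^1$ bounds) but could fail the interval inequality if $\Ea{h(\cdot),\Om}$ dips below $\ta_2$ somewhere in between. The component-and-type bookkeeping above is precisely what rules this out: by taking $a$ at the rightmost type-$+$ component of $K$ and $b$ at the first type-$-$ component strictly to its right, we guarantee that the interval $(a,b)$ meets no component of $K$ at all.
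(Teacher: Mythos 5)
Your proof is correct and is essentially the paper's own argument: your sets $A$ and $B$ are exactly the paper's $S_1$ and $S_2$ (closed, disjoint, covering $\{t:\Ea{h(t),\Om}\leq\ta_2\}$ by the volume constraint \eqref{e.tau_1} and the contrapositive of Lemma \ref{l.deformation.b}), and your choices $a=\sup A$, $b=\inf\{t>a:t\in B\}$ coincide with the paper's $a=\max S_1$, $b=\min(S_2\cap[a,1])$, with the endpoint equalities obtained by the same maximality-plus-continuity reasoning.
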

\begin{proof}
Let
\begin{align}
& S_1=\left\{t\in [0,1]:\Ea{h(t),\Om}\leq \ta_2\text{ and }\nm{1-h(t)}_{L^1(\Om)}\leq \ta_1 \right\};\\
& S_2=\left\{t\in [0,1]:\Ea{h(t),\Om}\leq \ta_2\text{ and }\nm{1+h(t)}_{L^1(\Om)}\leq \ta_1 \right\}.
\end{align}
By \eqref{e.tau_1}, 
\begin{equation}\label{e.disj}
\nexists\; t\in [0,1] \text{ \st } \max\left\{\nm{1-h(t)}_{L^1(\Om)}, \nm{1+h(t)}_{L^1(\Om)}\right\}\leq \ta_1.
\end{equation}
\eqref{e.disj}, together with the choices of \(\ta_1\), \(\ta_2\) and Lemma \ref{l.deformation.b}, implies that
\begin{equation}\label{e.disjoint}
S_1\cap S_2=\emptyset;\quad S_1\cup S_2=\left\{t\in [0,1]:\Ea{h(t),\Om}\leq \ta_2\right\}.
\end{equation}
\(S_1\) and \(S_2\) are closed subsets of \([0,1]\). Since \(h\in \sB\), \(0\in S_1\) and \(1\in S_2\). Let
\begin{equation}
a=\max S_1,\quad b=\min \left(S_2\cap [a,1]\right).
\end{equation}
\eqref{e.disjoint} implies that \(\nm{1+h(a)}_{L^1(\Om)}>\ta_1\). \sps \(\Ea{h(a),\Om}<\ta_2\). By continuity, \tes \(a'>a\) \st 
\[\Ea{h(a'),\Om}<\ta_2 \quad \text{ and } \quad \nm{1+h(a')}_{L^1(\Om)}>\ta_1,\]
 which implies (by \eqref{e.disjoint}) \(a'\in S_1\). This contradicts the definition of \(a\); hence \(\Ea{h(a),\Om}=\ta_2\). A similar argument shows that \(\Ea{h(b),\Om}=\ta_2\) as well. \sps there exists \(t'\in (a,b)\) \st \(\Ea{h(t'),\Om}<\ta_2\). Then by \eqref{e.disjoint}, \(t'\in S_1\cup S_2\), which contradicts the definitions of \(a\) and \(b\). This finishes the proof of the lemma.
\end{proof}

\textbf{Part 3.} By \eqref{e.contr} and Lemma \ref{l.cont}, \tes \(\de>0\) \st
\begin{equation}
\sup_{t\in [a,b]}\Ea{h(t)}\leq \tilde{\la}_{\al}(\Om)-\de.
\end{equation}
By Proposition \ref{p.approx.by.nested}, \tes a nested map \(\tilde{h}:[0,1]\ra \HN\) \st \(\htl(0)\geq h(a)\), \(\htl(1)\leq h(b)\), \(\md{\htl(t)}\leq 1\) \fa \(t\in [0,1]\) and
\begin{equation}\label{e.energy.htl}
\sup_{t\in [0,1]}\Ea{\htl(t)}\leq \tilde{\la}_{\al}(\Om)-\frac{\de}{2}.
\end{equation}

We recall from \eqref{e.w.eps} that $w_{\ve}= w_{1,\ve}(-2\sqrt{\ve})$; hence using the notation of \eqref{Om_r},
\begin{equation}\label{e.w.eps.1.-1}
w_{\ve}\equiv\begin{cases}
-1 & \text{ on } \Om_{-4\sqrt{\ve}};\\
1 & \text{ on } (N\setminus \Om).
\end{cases}
\end{equation}
\begin{lem}\label{l.map.T}
Let \(T:\HN\ra \HN\) be defined by
\begin{equation}
T(u)=\min\{-\we,\max\{\we,u\}\}.
\end{equation}
If \(\md{u}\leq 1\), then denoting \(\hat{u}=T(u)\), we have \(|\hat{u}|\leq 1\);
\begin{align}
& \Ea{\hat{u}}\leq \Ea{\we}+\Ea{u,\{-\we\geq u \geq \we\}}\leq \Ea{\we}+\Ea{u,\Om};\label{e.u'.1}\\
& \nm{1-\hat{u}}_{L^1(\Om)}\leq \nm{1-u}_{L^1(\Om)}+2\cH^{n+1}\left(\Om\setminus\Om_{-4\sqrt{\al}}\right);\label{e.u'.2}\\
& \nm{1+\hat{u}}_{L^1(\Om)}\leq \nm{1+u}_{L^1(\Om)}+2\cH^{n+1}\left(\Om\setminus\Om_{-4\sqrt{\al}}\right).\label{e.u'.3}
\end{align}
\end{lem}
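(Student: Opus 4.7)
The plan is to analyze \(\hat u = T(u)\) pointwise by splitting \(N\) according to the sign of \(w_\al\) and the position of \(u\) relative to \(w_\al\) and \(-w_\al\), and then to exploit the symmetry \(e_\al(-w_\al)=e_\al(w_\al)\) together with the explicit profile of \(w_\al\) recorded in \eqref{e.w.eps.1.-1}.

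Since \(|w_\al|,|u|\le 1\), we have \(\max\{w_\al,u\}\in[-1,1]\), whence \(\hat u\in[-1,1]\) and \(|\hat u|\le 1\). A short case check shows: if \(w_\al(x)\ge 0\), then \(-w_\al(x)\le w_\al(x)\le\max\{w_\al,u\}\), so \(\hat u(x)=-w_\al(x)\); if \(w_\al(x)<0\), then \(\hat u(x)\) is the projection of \(u(x)\) onto \([w_\al(x),-w_\al(x)]\), equal to \(w_\al\), \(u\), or \(-w_\al\) according as \(u\le w_\al\), \(w_\al\le u\le -w_\al\), or \(u\ge -w_\al\).

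For \eqref{e.u'.1}, decompose \(N\) into the pairwise disjoint sets \(\{\hat u=w_\al\}\), \(\{\hat u=u\}\), and \(\{\hat u=-w_\al\}\). Because \(W(-t)=W(t)\) and \(|\na(-w_\al)|=|\na w_\al|\), the energy density of \(\hat u\) coincides with that of \(w_\al\) on the first and third sets; as these two sets are disjoint, their combined contribution is bounded by \(E_\al(w_\al)\). The middle set is contained in \(\{w_\al\le u\le -w_\al\}\), whose contribution is exactly \(E_\al(u,\{-w_\al\ge u\ge w_\al\})\). Since \(\{w_\al\le u\le -w_\al\}\) forces \(w_\al\le 0\), and since \eqref{e.w.eps.1.-1} gives \(w_\al\equiv 1\) on \(N\setminus\Om\) and \(w_\al\equiv -1\) on \(\Om_{-4\sqrt\al}\), the set \(\{w_\al<0\}\) lies (up to a null set) inside the transition annulus \(\Om\setminus\Om_{-4\sqrt\al}\subset\Om\); this yields the stated further bound by \(E_\al(u,\Om)\).

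For the \(L^1\) estimates \eqref{e.u'.2}--\eqref{e.u'.3}, I split \(\Om=\Om_{-4\sqrt\al}\cup(\Om\setminus\Om_{-4\sqrt\al})\). On \(\Om_{-4\sqrt\al}\), \eqref{e.w.eps.1.-1} gives \(w_\al\equiv -1\), so \(\hat u=\min\{1,\max\{-1,u\}\}=u\) pointwise, and \(|1\mp\hat u|=|1\mp u|\) there; on the annular complement, the crude bound \(|\hat u|\le 1\) gives \(|1\mp\hat u|\le 2\). Integrating over each piece and summing produces precisely the claimed bounds. There is no serious obstacle; the only point to be careful about is that \(\{\hat u\ne u\}\) is controlled entirely by the transition annulus of \(w_\al\) inside \(\Om\), which is the geometric reason why every error term on the right-hand side is measured on \(\Om\) rather than on all of \(N\).
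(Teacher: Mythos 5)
Your argument is correct and follows essentially the same route as the paper's proof: the pointwise case analysis (\(\hat u=u\) on \(\{-\we\geq u\geq\we\}\), \(\hat u=\pm\we\) otherwise, plus the symmetry \(W(-t)=W(t)\) and \(|\na(-\we)|=|\na\we|\)), the inclusion coming from \eqref{e.w.eps.1.-1}, and the splitting \(\Om=\Om_{-4\sqrt{\al}}\cup\big(\Om\setminus\Om_{-4\sqrt{\al}}\big)\) with the crude bound \(|1\mp\hat u|\leq 2\) for \eqref{e.u'.2}--\eqref{e.u'.3}.

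Two side assertions in your write-up are false as stated, although the conclusions you draw survive because they only use weaker, correct facts. First, \(\{\we<0\}\) is \emph{not} contained in the annulus \(\Om\setminus\Om_{-4\sqrt{\al}}\): by \eqref{e.w.eps.1.-1} one has \(\we\equiv-1\) on \(\Om_{-4\sqrt{\al}}\), so in fact \(\Om_{-4\sqrt{\al}}\subset\{\we<0\}\). What the second inequality in \eqref{e.u'.1} needs is only \(\{\we\leq 0\}\subset\Om\), which follows from \(\we\equiv1\) on \(N\setminus\Om\) alone. Second, your closing remark that \(\{\hat u\neq u\}\) is controlled by the transition annulus inside \(\Om\) fails on \(N\setminus\Om\): there \(\we\equiv1\) forces \(\hat u\equiv-1\) regardless of \(u\). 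The correct (and sufficient) statement is \(\{\hat u\neq u\}\cap\Om\subset\Om\setminus\Om_{-4\sqrt{\al}}\); on \(N\setminus\Om\) one has \(\hat u=-\we\), whose energy is already absorbed into \(\Ea{\we}\), and the \(L^1\) norms in \eqref{e.u'.2}--\eqref{e.u'.3} are taken over \(\Om\) only, so nothing is lost. With these two remarks corrected, your proof coincides with the paper's.
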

\begin{proof}
\begin{equation}\label{e.u'.cases}
\hat{u}=\begin{cases}
u &\text{on } \{-\we\geq u \geq \we\};\\
\pm \we & \text{otherwise. }
\end{cases}
\end{equation}
\tf $|\hat{u}|\leq 1$. Moreover,
\begin{equation}\label{e.sub.Om}
\{-\we\geq u \geq \we\}\subset\{\we\leq 0\}\subset \Om\;\text{ (by \eqref{e.w.eps.1.-1})}.
\end{equation}
Combining \eqref{e.u'.cases} and \eqref{e.sub.Om}, one gets \eqref{e.u'.1}. It follows from \eqref{e.w.eps.1.-1} and \eqref{e.u'.cases} that 
\begin{equation}\label{e.u'=u}
\hat{u}=u\text{ on }\Om_{-4\sqrt{\al}}.
\end{equation}
Moreover,
\begin{equation}\label{e.1.pm.we}
0\leq 1\pm \we \leq 2.
\end{equation}
Equations \eqref{e.u'.2} and \eqref{e.u'.3} both follow from \eqref{e.u'.cases}, \eqref{e.u'=u} and \eqref{e.1.pm.we}.
\end{proof}
Let us define 
\begin{equation}\label{e.ze}
\ell=\min\{h(a),-h(b)\};
\end{equation}
so
\begin{equation}
-\ell=\max\{-h(a),h(b)\}.
\end{equation}
Moreover,
\begin{equation}\label{e.1-zeta}
0\leq 1-\ell\leq(1-h(a))+(1+h(b)).
\end{equation}
\begin{lem}\label{l.htl.t*}
Let $\htl$ be as in \eqref{e.energy.htl}. There exists \(t^*\in [0,1]\) \st
\begin{equation}\label{e.htl.t*}
\Eaa{\htl(t^*),\{\we\leq \htt \leq -\we\}^c\cup \{-\ell\leq \htt\leq \ell\}^c}\leq \Ea{\we}+2\ta_2.
\end{equation}
Here, for \(S\subset N\), \(S^c=(N\setminus S)\).
\end{lem}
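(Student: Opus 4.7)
The plan is to establish the existence of $t^*$ by an averaging argument applied to the quantity
$$\Phi(t) := \Ea{\htl(t), A(t) \cup B(t)}, \quad A(t) = \{w_\al \le \htl(t) \le -w_\al\}^c, \quad B(t) = \{-\ell \le \htl(t) \le \ell\}^c.$$
Concretely, I would aim to show that $\int_0^1 \Phi(t)\,dt \le \Ea{w_\al} + 2\ta_2$; the existence of the required $t^*$ then follows from the mean-value principle.

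The bad set $A(t) \cup B(t)$ decomposes into the four pieces $\{\htl(t) > -w_\al\}$, $\{\htl(t) < w_\al\}$, $\{\htl(t) > \ell\}$ and $\{\htl(t) < -\ell\}$, each monotone in $t$ because $\htl$ is nested: for every point $x \in N$ the set of times at which $x$ belongs to a given piece is a closed sub-interval of $[0,1]$. This structure is what makes a Fubini/coarea exchange between temporal and spatial integration possible. The two $w_\al$-pieces are controlled via Lemma \ref{l.map.T}: inequality \eqref{e.u'.1} applied to $T(\htl(t))$ gives $\Ea{\htl(t), A(t)} \le \Ea{w_\al} + \Ea{\htl(t)} - \Ea{T(\htl(t))}$, and the second difference is non-positive on average because $T \circ \htl$, after endpoint deformations from Lemma \ref{l.deformation.a} (justified by $\htl(0) \ge h(a)$ and $\htl(1) \le h(b)$ being $L^1$-close to $\pm 1$ on $\Om$), produces a competitor for $\tilde{\la}_\al(\Om)$ whose sup-energy must be at least $\tilde{\la}_\al(\Om)$. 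The two $\ell$-pieces are controlled using the $L^1$ estimate $\|1 - \ell\|_{L^1(\Om)} + \|1 + \ell\|_{L^1(\Om)} \le 2\ta_1$, which follows from \eqref{e.1-zeta} together with Lemma \ref{l.cont}, combined with Lemma \ref{l.deformation.b} and the choices of $\ta_1, \ta_2$ fixed in Part 1; this contribution is at most $2\ta_2$.

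The main obstacle is making the Fubini/coarea step rigorous, since the energy density $e_\al(\htl(t))$ depends on $|\na \htl(t)|^2$, which does not interact naively with the characteristic function of the time-varying bad set. I would deal with this either by first smoothing $\htl$ as in Proposition \ref{p.approx.by.nested} and then passing to the limit, or by appealing to a coarea identity in the spirit of Lemma \ref{l.coarea} that recasts the time-integral as a spatial integral over $N$ weighted by the variation of $\htl$ through each of the four barriers $\pm w_\al, \pm \ell$. A secondary technical point is to verify that the endpoint deformations $T \circ \htl$ genuinely yields a path in $\sB$, so that its maximum energy is indeed bounded below by $\tilde{\la}_\al(\Om)$; this is where the $L^1$-closeness of $\htl(0), \htl(1)$ to $\pm 1$ on $\Om$ (inherited from Lemma \ref{l.cont}) is used.
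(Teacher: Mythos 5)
Your proposal contains the right ingredients (the truncation $T$, the function $\ell$, Lemma \ref{l.trancation}/\ref{l.deformation.a}, the smallness estimates from Lemma \ref{l.cont}, and the min-max threshold $\tilde{\la}_{\al}(\Om)$), but the organizing idea — bounding the \emph{time average} $\int_0^1\Ea{\htl(t),A(t)\cup B(t)}\,dt$ and invoking the mean value principle — does not work, and no Fubini/coarea step can rescue it. First, the claimed integral inequality is false in general: since $\we\equiv 1$ on $N\setminus\Om$, the bad set $\{\we\leq \htl(t)\leq -\we\}^c$ contains all of $N\setminus\Om$, so $\Ea{\htl(t),A(t)\cup B(t)}\geq \Ea{\htl(t),N\setminus\Om}$, and nothing in the hypotheses prevents $\htl(t)$ from carrying energy of order $\tilde{\la}_{\al}(\Om)$ (much larger than $\Ea{\we}+2\ta_2\approx 2\si\cH^n(\del\Om)$) outside $\Om$ for a set of times of measure close to $1$. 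Second, the mechanism you invoke to control the $\we$-pieces — that $\Ea{\htl(t)}-\Ea{T(\htl(t))}$ is ``non-positive on average'' because the capped-off path is a competitor for $\tilde{\la}_{\al}(\Om)$ — misreads what the min-max inequality gives: a competitor in $\sB$ only guarantees that its energy exceeds $\tilde{\la}_{\al}(\Om)$ at \emph{some single} parameter value, not in any averaged sense. That single parameter value is exactly the $t^*$ of the lemma, and the inequality \eqref{e.htl.t*} is a pointwise statement at that $t^*$ obtained by subtraction: one shows $\Ea{h''(t^*)}\geq\tilde{\la}_{\al}(\Om)$ while $\Ea{\htl(t^*)}<\tilde{\la}_{\al}(\Om)$ by \eqref{e.energy.htl}, and combines this with the pointwise decomposition $\Ea{h''(t)}\leq\Ea{\we}+\Ea{\htl(t),\{\we\leq\htl(t)\leq-\we\}\cap\{-\ell\leq\htl(t)\leq\ell\}}+2\ta_2$; the energy of $\htl(t^*)$ on the complementary (bad) set is then forced to be at most the discrepancy $\Ea{\we}+2\ta_2$. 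Your $\ell$-piece estimate (``at most $2\ta_2$'' pointwise in $t$) has the same flaw: $2\ta_2$ bounds $\Ea{\ell,\Om}$, not the energy of $\htl(t)$ on $\{-\ell\leq\htl(t)\leq\ell\}^c$, which again is only controlled at the special $t^*$.

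There is a second, quieter gap: you feed $T\circ\htl$ (with endpoint deformations) into the min-max argument, omitting the $\ell$-truncation $h'(t)=\min\{\ell,\max\{-\ell,\htl(t)\}\}$ from the competitor. The $\ell$-truncation is not merely bookkeeping for the $\ell$-part of the bad set: it forces the endpoints to be exactly $\pm$ truncations of $\ell$, whence $\Ea{h''(0)},\Ea{h''(1)}\leq 2\si\cH^n(\del\Om)+3\ta_2$, and it is this smallness (together with \eqref{e.eps.good}, $\la_{\al}(\Om)>7\si\cH^n(\del\Om)$, and the $5\si\cH^n(\del\Om)$ bound on the capping paths from Lemma \ref{l.deformation.a}) that guarantees the parameter where the concatenated competitor reaches $\tilde{\la}_{\al}(\Om)$ lies on the middle ($h''$) segment. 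Without truncating by $\pm\ell$, the endpoint $T(\htl(0))$ may have energy comparable to $\tilde{\la}_{\al}(\Om)$ (since $\Ea{\htl(0),\Om}$ is not small — only $\Ea{h(a),\Om}=\ta_2$ is), and the capping paths could then exceed $\tilde{\la}_{\al}(\Om)$, destroying the localization of $t^*$ to the truncated segment on which the comparison argument is run.
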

\begin{proof}
Let \(h':[0,1]\ra \HN\) be defined by
\begin{equation}\label{e.h'(t)}
h'(t)=\min\{\ell,\max\{-\ell,\htl(t)\}\}=\begin{cases}
\htl(t) &\text{on }\{\ell\geq\htl(t)\geq -\ell\};\\
\pm\ell &\text{otherwise}.
\end{cases}
\end{equation}
Since
\[\ell\leq h(a)\leq \htl(0)\leq \max\{-\ell,\htl(0)\}\quad\text{ and }\quad -\ell\geq h(b)\geq \htl(1),\]
we have
\begin{equation}\label{e.h'.0.1}
h'(0)=\ell\quad\text{ and }\quad h'(1)=\min\{\ell,-\ell\}.
\end{equation}

Let \(h'':[0,1]\ra \HN\) be defined by \(h''(t)=T(h'(t))\), where \(T\) is as in Lemma \ref{l.map.T}. By Lemma \ref{l.map.T}, \eqref{e.h'.0.1}, \eqref{e.En.eps}  and Lemma \ref{l.cont},
\begin{align}
&\Ea{h''(0)}\leq \Ea{\we}+\Ea{\ell,\Om}\leq 2\si\cH^n(\del\Om)+3\ta_2;\\
& \Ea{h''(1)}\leq \Ea{\we}+\Ea{\ell,\Om}\leq 2\si\cH^n(\del\Om)+3\ta_2.
\end{align}
Moreover, by Lemma \ref{l.map.T}, \eqref{e.1-zeta}, Lemma \ref{l.cont} and \eqref{e.vol.eps},
\begin{align}
&\nm{1-h''(0)}_{L^1(\Om)}\leq \nm{1-\ell}_{L^1(\Om)}+2\cH^{n+1}\left(\Om\setminus\Om_{-4\sqrt{\al}}\right)\leq 3\ta_1;\\
&\nm{1+h''(1)}_{L^1(\Om)}\leq \nm{1-\ell}_{L^1(\Om)}+2\cH^{n+1}\left(\Om\setminus\Om_{-4\sqrt{\al}}\right)\leq 3\ta_1.
\end{align}
\tf by our choices of \(\ta_1\), \(\ta_2\), \(\ta\) and Lemma \ref{l.deformation.a}, \tes a \cts map \(\be_0:[0,1]\ra \HN\) \st \(\be_0(0)=h''(0)\), \(\be_0(1)\big|_{\Om}\equiv 1\) and
\begin{equation}\label{e.be_0}
\sup_{t\in [0,1]}\Ea{\be_0(t)}\leq 4\si\cH^n(\del\Om)+3\ta_2+\ta\leq5\si\cH^n(\del\Om)\;(\text{by }\eqref{tau}\text{ and }\eqref{tau2}).
\end{equation}
Similarly, \tes a \cts map \(\be_1:[0,1]\ra \HN\) \st \(\be_1(0)=h''(1)\), \(\be_1(1)\big|_{\Om}\equiv -1\) and
\begin{equation}\label{e.be_1}
\sup_{t\in [0,1]}\Ea{\be_1(t)}\leq 4\si\cH^n(\del\Om)+3\ta_2+\ta\leq5\si\cH^n(\del\Om).
\end{equation}
Let us define \(\be:[0,1]\ra \HN\) by
\begin{equation}
\be(t)=\begin{cases}
\be_0(1-3t) &\text{if }0\leq t\leq 1/3;\\
h''(3t-1) &\text{if }1/3\leq t\leq 2/3;\\
\be_1(3t-2) &\text{if }2/3\leq t\leq 1.\\
\end{cases}
\end{equation}
Since \(\be\in \sB\), \tes \(t^{\bullet}\in [0,1]\) \st 
\[\Ea{\be(t^{\bullet})}\geq \tilde{\la}_{\al}(\Om)\geq \la_{\al}(\Om)>7\si\cH^n(\del\Om)\;(\text{using \eqref{la.tld.ep}, \eqref{e.width.ineq.2} and \eqref{e.eps.good}}).\]
\tf by \eqref{e.be_0} and \eqref{e.be_1}, \tes \(t^*\in [0,1]\) \st
\begin{equation}\label{e.t*}
\Ea{h''(t^*)}\geq \tilde{\la}_{\al}(\Om).
\end{equation}
However, by Lemma  \ref{l.map.T}, \eqref{e.h'(t)} and Lemma \ref{l.cont}, for all \(t\in [0,1]\),
\begin{align}
\Ea{h''(t)}&\leq \Ea{\we}+\Eaa{h'(t),\{\we\leq h'(t)\leq -\we\}}\\
&\leq \Ea{\we}+\Eaa{\htl(t),\{\we\leq \htl(t)\leq -\we\}\cap \{-\ell\leq \htl(t)\leq \ell\}}+\Ea{\ell, \Om}\\
&\leq \Ea{\we}+\Eaa{\htl(t),\{\we\leq \htl(t)\leq -\we\}\cap \{-\ell\leq \htl(t)\leq \ell\}}+ 2\ta_2.\label{loc1}
\end{align}
Further, by \eqref{e.energy.htl},
\begin{equation}\label{loc2}
\Ea{\htl(t^*)}<\tilde{\la}_{\al}(\Om).
\end{equation}
Combining \eqref{e.t*}, \eqref{loc1} and \eqref{loc2}, one obtains \eqref{e.htl.t*}.
\end{proof}

\textbf{Part 4.} 
For \(r \in \bbr\), let \(r^{+}=\max\{r,0\}\); \(r^{-}=\min\{r,0\}\). The maps \(\Ph,\Ps,\Tht :H^1(N)\times H^1(N)\times H^1(N) \ra H^1(N)\) are defined as follows \cite{d}*{Equation (3.70)}.
\begin{align}
&\Ph(u_0,u_1,w)=\min\{\max\{u_0,-w\},\max\{u_1,w\}\}; \nonumber\\
&\Ps(u_0,u_1,w)=\max\{\min\{u_0,w\},\min\{u_1,-w\}\}; \label{def of phi, psi}\\
&\Tht(u_0,u_1,w)=\Ph(u_0,u_1,w)^{+}+\Ps(u_0,u_1,w)^{-}.\nonumber
\end{align}
\begin{lem}\label{l.tht}
Let \(u_0,u_1,w\in \HN\) \st \(|u_0|,|u_1|,|w|\leq 1\); \(\ph=\Ph(u_0,u_1,w)\), \(\ps=\Ps(u_0,u_1,w)\), \(\tht=\Tht(u_0,u_1,w)\).
\begin{itemize}
	\item[(i)] If \(w(x)=1\), then \(\tht(x)=u_0(x)\); if \(w(x)=-1\), then \(\tht(x)=u_1(x)\).
	\item[(ii)] If \(u_0(x)=u_1(x)\), then \(\tht(x)=u_0(x)=u_1(x)\).
	\item[(iii)] For all \(x\in N\), either \(\tht(x)=\ph(x)\) or \(\tht(x)=\ps(x)\); hence \(\tht(x)\in \{u_0(x),u_1(x),w(x),-w(x)\}\).
	\item[(iv)] For \(\ve>0\) and \(S\subset N\),
	\begin{align}
	\Ee{\tht,S}\leq \Ee{w,S}&+\Eee{u_0,S\cap\big(\{u_0>-w\}\cup\{u_0<w\}\big)}\\
	&+\Eee{u_1,S\cap\big(\{u_1>w\}\cup\{u_1<-w\}\big)}.
	\end{align}
	\item[(v)] If \(v_0,v_1\in \HN\) \st \(v_0\geq u_0,u_1\geq v_1\), then \(v_0\geq \tht\geq v_1\).
\end{itemize}
\end{lem}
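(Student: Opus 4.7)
My plan is to treat the lemma as a pointwise verification: at each $x\in N$ the maps $\Ph,\Ps,\Tht$ are built from finitely many $\min$, $\max$, and positive/negative part operations, so every claim reduces to a finite case analysis on the ordering of $u_0(x)$, $u_1(x)$, $w(x)$, $-w(x)$. Parts (i) and (ii) I will handle by direct substitution, using $|u_0|,|u_1|,|w|\leq 1$ together with the identities $\min\{\max\{u,a\},\max\{u,b\}\}=\max\{u,\min\{a,b\}\}$ and its dual; for (ii), setting $u_0=u_1=u$ yields $\ph=\max\{u,-|w|\}$ and $\ps=\min\{u,|w|\}$, and a short sign split on $u$ versus $|w|$ then gives $\ph^++\ps^-=u$.

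The structural engine underlying (iii)--(v) is the pointwise inequality $\ps\leq\ph$, which I will establish by noting that each inner $\min$ of $\ps$ is bounded above by each inner $\max$ of $\ph$ (for instance $\min\{u_0,w\}\leq u_0\leq\max\{u_0,-w\}$ and $\min\{u_0,w\}\leq w\leq\max\{u_1,w\}$). Granted $\ps\leq\ph$, part (iii) splits on signs. The cases $\ph,\ps\geq 0$ and $\ph,\ps\leq 0$ yield $\tht=\ph$ and $\tht=\ps$ respectively; the case $\ph<0<\ps$ is impossible by $\ps\leq\ph$; the remaining case $\ph>0>\ps$ I will rule out by examining the sign of $w$: if $w>0$, then $\ph>0$ forces $u_0>0$ via $\max\{u_0,-w\}$ while $\ps<0$ forces $u_0<0$ via $\min\{u_0,w\}$, a contradiction; $w<0$ is symmetric in $u_1$, and $w=0$ is immediate. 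Hence $\tht\in\{\ph,\ps\}$ pointwise, and since each of $\ph,\ps$ is built from two nested selections among $u_0,u_1,w,-w$, this yields $\tht(x)\in\{u_0(x),u_1(x),w(x),-w(x)\}$.

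For (iv), I will partition $N$ into $A=\{\tht\in\{w,-w\}\}$ and the complementary pieces where $\tht=u_0$ or $\tht=u_1$ (breaking ties arbitrarily). On $A$, $e_\ve(\tht)=e_\ve(w)=e_\ve(-w)$ by the symmetry $W(t)=W(-t)$ and $|\nabla(-w)|=|\nabla w|$. On $\{\tht=u_0\}\setminus A$, the condition $u_0\notin\{w,-w\}$ together with the sign analysis from (iii) rules out the configuration $w<u_0<-w$ (which would otherwise force $\tht$ into $\{w,-w\}$), leaving $u_0>-w$ or $u_0<w$; the $u_1$ piece is symmetric. Summing the three contributions delivers (iv).

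Part (v) I will deduce from the sandwich $\ps\leq\tht\leq\ph$ (immediate from the sign analysis) together with the pointwise bound $\min\{u_0,u_1\}\leq\tht\leq\max\{u_0,u_1\}$. For the upper bound, when $\tht=\ph$ with both non-negative I inspect which of $u_0,-w$ and $u_1,w$ realize the inner $\max$'s, giving $\ph\leq u_0$ or $\ph\leq u_1$; the only exceptional sub-case, both $\max$'s equal to $\pm w$, forces $\ph=-|w|\leq 0$, hence $w=0$ and $\ph=0$. When $\tht=\ps$ with both non-positive, $\ps\leq\max\{u_0,u_1\}$ is immediate from the outer $\max$. The lower bound is symmetric, and combining with $v_0\geq\max\{u_0,u_1\}\geq\min\{u_0,u_1\}\geq v_1$ yields (v). The main obstacle is the elimination of the sign configuration $\ph>0>\ps$ in (iii); without this the entire plan unravels, since $\tht$ would not pointwise lie in $\{\ph,\ps\}$. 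A secondary subtlety is the partition bookkeeping in (iv) around points where $\tht$ coincidentally equals several of $u_0,u_1,w,-w$, but the freedom to choose ties resolves this.
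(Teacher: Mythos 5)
Your plan is sound and, unlike the paper, self-contained: the paper disposes of (i)--(iii) by citing \cite{d}, gets (iv) in one line from (iii) and the definitions, and proves (v) by a monotonicity argument ($\Ph$ and $\Ps$ are monotone in the pair $(u_0,u_1)$ and $r\mapsto r^{\pm}$ is monotone, so $\Tht(v_0,v_0,w)\geq\Tht(u_0,u_1,w)\geq\Tht(v_1,v_1,w)$, and item (ii) identifies the outer terms as $v_0$ and $v_1$). Your key observations -- $\ps\leq\ph$ pointwise and the impossibility of $\ph>0>\ps$ -- are correct and give (iii); your case-analytic route to (v) via the sandwich $\min\{u_0,u_1\}\leq\tht\leq\max\{u_0,u_1\}$ also works, though the paper's monotonicity argument reaches (v) with none of the sign bookkeeping and you may want to adopt it.

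There is, however, one step in your (iv) that is false as stated: on $\{\tht=u_0\}\setminus A$ the configuration $w<u_0<-w$ is \emph{not} ruled out. If $w\leq u_0\leq -w$, then $\max\{u_0,-w\}=-w$ and $\min\{u_0,w\}=w$, so by (iii) the value of $\tht$ is forced into $\{u_1,w,-w\}$, not into $\{w,-w\}$ as you claim, and the sub-case $\tht=u_0=u_1$ survives. Concretely, take $u_0=u_1=0$ and $w=-1/2$ at a point: by (ii), $\tht=0=u_0\neq\pm w$, yet $w<u_0<-w$, and the point does not lie in $\{u_0>-w\}\cup\{u_0<w\}$, so its energy cannot be charged to the $u_0$ term. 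The repair is exactly the tie-breaking you allude to, but it cannot be arbitrary: at any such point one necessarily has $u_0=u_1$ (hence $e_{\ve}(u_0)=e_{\ve}(u_1)$ a.e. there), and the point does lie in $\{u_1>w\}\cup\{u_1<-w\}$, so it must be assigned to the $u_1$ piece; symmetrically, points with $\tht=u_1=u_0$ and $-w<u_1<w$ must go to the $u_0$ piece. Failing both inclusions at a point with $\tht=u_0=u_1\neq\pm w$ would force $w\leq u_0\leq-w$ and $-w\leq u_0\leq w$, i.e.\ $u_0=w=0$, contradicting $u_0\neq\pm w$; so every tie point lies in at least one of the two sets and a measurable assignment exists. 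With this correction (together with the standard facts that gradients of $H^1$ functions agree a.e.\ on coincidence sets and that $W(-t)=W(t)$ handles the $\pm w$ piece), your proof of (iv), and hence of the whole lemma, goes through.
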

\begin{proof}
For the proofs of items (i) -- (iii), we refer to \cite{d}*{Proof of Proposition 3.12}. (iv) follows from (iii) and the definitions of \(\Ph\) and \(\Ps\).
To prove item (v), we note the following. If \(r_1,r_1',r_2,r_2'\in \bbr\) \st \(r_1\geq r_2\) and \(r_1'\geq r_2'\), then \(\max\{r_1,r_1'\}\geq \max\{r_2,r_2'\}\) and \(\min\{r_1,r_1'\}\geq \min\{r_2,r_2'\}\). In particular, if we set \(r_1'=r_2'=s\), then \(\max\{r_1,s\}\geq \max\{r_2,s\} \) and \(\min\{r_1,s\}\geq \min\{r_2,s\}.\) \hn \(v_0\geq u_0,u_1\geq v_1\) implies
\begin{align}
&\Ph(v_0,v_0,w)\geq \Ph (u_0,u_1,w)\geq \Ph(v_1,v_1,w),\\
&\Ps(v_0,v_0,w)\geq \Ps (u_0,u_1,w)\geq \Ps(v_1,v_1,w).
\end{align}
\tf using item (ii), we obtain
\begin{equation}
v_0\geq \Tht(u_0,u_1,w)\geq v_1.
\end{equation}
\end{proof}
Let $t^*$ be as in Lemma \ref{l.htl.t*} and \(\ell\) be as defined in \eqref{e.ze}. We define
\begin{align}
&\ell_0=\max\{\htt,\ell\};\quad \ell_1=\min \{\htt,-\ell\};\\
& h^*_0=\Tht(\htt,\ell_0,\we);\quad h^*_1=\Tht(\htt,\ell_1,\we).
\end{align}
Using the fact that 
\begin{equation}
\ell_0=\begin{cases}
\ell &\text{on } \{\htt\leq \ell\}\\
\htt &\text{on } \{\htt>\ell\},
\end{cases}
\end{equation}
and Lemma \ref{l.tht} (ii), (iv), we obtain
\begin{align}
\Ea{h^*_0}
&=\Eaa{\Tht(\htt,\ell,\we),\{\htt\leq \ell\}}+\Eaa{\htt,\{\htt>\ell\}}\\
&\leq \Ea{\we}+\Eab{\ell,\{\ell>\we\}\cup \{\ell<-\we\}}\\
&\hspace{11ex}+\Eaa{\htt,\{\htt>\ell\}\cup\{\htt>-\we\}\cup\{\htt<\we\}}\\
&\leq 2\Ea{\we}+4\ta_2.\label{En.h*0}
\end{align}
In the last step we have used Lemma \ref{l.cont}, Lemma \ref{l.htl.t*} and the fact that 
\[\{\ell>\we\}\cup \{\ell<-\we\}\subset \{\we <1\}\subset \Om\;(\text{by } \eqref{e.w.eps.1.-1}).\]
By a similar argument,
\begin{equation}\label{En.h*1}
\Ea{h^*_1}\leq 2\Ea{\we}+4\ta_2.
\end{equation}
By Lemma \ref{l.tht} (i) and \eqref{e.w.eps.1.-1}, \(h^*_0=\ell_0\) on \(\Om_{-4\sqrt{\al}}\). Further, \(1\geq\ell_0\geq\ell\) and by Lemma \ref{l.tht} (v), \(\md{h^*_0}\leq 1\). Hence,
\begin{align}
\nm{1-h_0^*}_{L^1(\Om)}&\leq \nm{1-\ell_0}_{L^1(\Om_{-4\sqrt{\al}})}+2\cH^{n+1}\left(\Om\setminus\Om_{-4\sqrt{\al}}\right)\\
&\leq \nm{1-\ell}_{L^1(\Om)}+\ta_1\;(\text{by }\eqref{e.vol.eps})\\
&\leq 3\ta_1 \;(\text{by Lemma } \ref{l.cont}).\label{L^1.h^*.0}
\end{align}
Similarly, one can show that 
\begin{equation}\label{L^1.h^*.1}
\nm{1+h_1^*}_{L^1(\Om)}\leq 3\ta_1.
\end{equation}
Using Lemma \ref{l.tht} (v) and the definitions of \(\htl\) and \(\ell\), we obtain
\begin{equation}\label{chain.inequ}
\htl(0)\geq h^*_0\geq \htt;\quad \htt\geq h^*_1\geq\htl(1) .
\end{equation}

By Lemma \ref{l.minimization.prob}, \tes \(\htl(0)\geq h^{\bullet}_0\geq h^*_0\) \st
\begin{align}
\Ea{h^{\bullet}_0}&=\inf \{\Ea{u}:\htl(0)\geq u \geq h^*_0\}\\
&\leq \Ea{h^*_0}\\
&\leq 2\Ea{\we}+4\ta_2\;(\text{by }\eqref{En.h*0})\\
&\leq 4\si\cH^n(\del\Om)+6\ta_2\;(\text{by }\eqref{e.En.eps}).\label{e.En.h^bl.0}
\end{align}
Moreover, \eqref{L^1.h^*.0} and \(1\geq h^{\bullet}_0\geq h^*_0\) imply that
\begin{equation}\label{e.L^1.h^bl.0}
\nm{1-h_0^{\bullet}}_{L^1(\Om)}\leq \nm{1-h_0^*}_{L^1(\Om)}\leq 3\ta_1.
\end{equation}
Similarly, by Lemma \ref{l.minimization.prob}, \tes \(h^*_1\geq h^{\bullet}_1\geq \htl(1)\) \st
\begin{align}
\Ea{h^{\bullet}_1}&=\inf \{\Ea{u}:h^*_1\geq u \geq \htl(1)\}\\
&\leq \Ea{h^*_1}\\
&\leq 2\Ea{\we}+4\ta_2\;(\text{by }\eqref{En.h*1})\\
&\leq 4\si\cH^n(\del\Om)+6\ta_2\;(\text{by }\eqref{e.En.eps}).\label{e.En.h^bl.1}
\end{align}
Moreover, \eqref{L^1.h^*.1} and \(h^*_1\geq h^{\bullet}_1\geq -1\) imply that
\begin{equation}\label{e.L^1.h^bl.1}
\nm{1+h_1^{\bullet}}_{L^1(\Om)}\leq \nm{1+h_1^*}_{L^1(\Om)}\leq 3\ta_1.
\end{equation}

Setting $u=\htl$ and $v=h^{\bullet}_1$ in Lemma \ref{l.trancation} (a), we conclude that \tes a nested map \(\ga_1:[0,1]\ra \HN\) \st \(\ga_1(0)=\htl(0)\), \(\ga_1(1)=h^{\bullet}_1\) (\eqref{chain.inequ} implies that $\htl(0)\geq h^{\bullet}_1$)  and
\begin{equation}
\sup_{t\in [0,1]}\Ea{\ga_1(t)}\leq \sup_{t\in [0,1]}\Ea{\htl(t)}\leq \tilde{\la}_{\al}(\Om)-\frac{\de}{2}\;(\text{by }\eqref{e.energy.htl}).
\end{equation}
Next, setting $u=\ga_1$ and $v=h^{\bullet}_0$ in Lemma \ref{l.trancation} (b), we obtain another nested map \(\ga_2:[0,1]\ra \HN\) \st \(\ga_2(0)=h^{\bullet}_0\), \(\ga_2(1)=h^{\bullet}_1\) (\eqref{chain.inequ} implies that $h^{\bullet}_0\geq h^{\bullet}_1$) and
\begin{equation}\label{ga.2}
\sup_{t\in [0,1]}\Ea{\ga_2(t)}\leq \sup_{t\in [0,1]}\Ea{\ga_1(t)}\leq \tilde{\la}_{\al}(\Om)-\frac{\de}{2}.
\end{equation}

By the definitions of \(\ta_1\), \(\ta\) and Lemma \ref{l.deformation.a}, \eqref{e.En.h^bl.0} and \eqref{e.L^1.h^bl.0} imply that \tes \(\tilde{\ga}_0:[0,1]\ra \HN\) \st \(\tilde{\ga}_0(0)=h^{\bullet}_0\), \(\tilde{\ga}_0(1)\big|_{\Om}\equiv 1\) and
\begin{equation}\label{a1}
\sup_{t\in [0,1]}\Ea{\tilde{\ga}_0(t)}\leq 6\si\cH^n(\del\Om)+6\ta_2+\ta\leq 7\si\cH^n(\del\Om)\; (\text{by }\eqref{tau} \text{ and }\eqref{tau2}).
\end{equation}
Similarly \eqref{e.En.h^bl.1} and \eqref{e.L^1.h^bl.1} imply that \tes \(\tilde{\ga}_1:[0,1]\ra \HN\) \st \(\tilde{\ga}_1(0)=h^{\bullet}_1\), \(\tilde{\ga}_1(1)\big|_{\Om}\equiv -1\) and
\begin{equation}\label{a2}
\sup_{t\in [0,1]}\Ea{\tilde{\ga}_1(t)}\leq 6\si\cH^n(\del\Om)+6\ta_2+\ta\leq 7\si\cH^n(\del\Om)\; (\text{by }\eqref{tau} \text{ and }\eqref{tau2}).
\end{equation}
Let \(\ga:[0,1]\ra \HN\) be defined by
\begin{equation}
\ga(t)=\begin{cases}
\tilde{\ga}_0(1-3t) &\text{if }0\leq t\leq 1/3;\\
\ga_2(3t-1) &\text{if }1/3\leq t\leq 2/3;\\
\tilde{\ga}_1(3t-2) &\text{if }2/3\leq t\leq 1.\\
\end{cases}
\end{equation}
Then \(\ga\in \sB\); \eqref{ga.2}, \eqref{a1}, \eqref{a2}, \eqref{e.eps.good} and \eqref{e.width.ineq.2} imply that
\begin{equation}
\sup_{t\in [0,1]}\Ea{\ga(t)}< \tilde{\la}_{\al}(\Om),
\end{equation}
which contradicts the definition of \(\tilde{\la}_{\al}(\Om)\) (\eqref{la.tld.ep}). This finishes the proof of Proposition \ref{p.intersection}.
\end{proof}

\begin{thm}\label{t.second.thm}
Let \((N^{n+1},g)\), \(n+1\geq 3\), be a closed \Rm manifold and \(\Om\subset N\) be a good set. \sps \(\ve^*\) and \(\et^*\) are as in Proposition \ref{p.intersection} and Remark \ref{r.intersection} (where we set \(g'=g\) in Remark \ref{r.intersection}). Then for all \(0<\ve\leq \ve^*\), \tes \(\vartheta_{\ve}:N\ra (-1,1)\) satisfying \(AC_{\ve}(\vartheta_{\ve})=0\), \(\textup{Ind}(\vartheta_{\ve})\leq 1\), \(E_{\ve}(\vartheta_{\ve})=\tilde{\la}_{\ve}(\Om)\) and \(E_{\ve}(\vartheta_{\ve},\Om)\geq \et^*\).
\end{thm}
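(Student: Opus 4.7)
The plan is to apply the abstract min-max theorem (Theorem \ref{t.min.max}) on $\sH=\HN$ to the functional $\cE=E_{\ve}$ with endpoint sets
\[
B_0=\{u\in\HN:u|_{\Om}\equiv 1\}, \qquad B_1=\{u\in\HN:u|_{\Om}\equiv -1\},
\]
both closed in $\HN$. Then the class $\sF$ of \eqref{def.sF} coincides with $\sB$ and the min-max value equals $\lte(\Om)$. For the linking set I take
\[
L=\left\{u\in\HN:\Ee{u}\geq\lte(\Om)\text{ and }\Ee{u,\Om}\geq\et^*\right\},
\]
which is closed by the weak lower semicontinuity of $E_{\ve}$. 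Hypothesis (a1) holds since $u|_{\Om}\equiv\pm 1$ forces $\Ee{u,\Om}=0<\et^*$; (a2) is exactly the content of Proposition \ref{p.intersection}; (a3) is immediate.

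I would verify the Palais--Smale condition by observing that the truncation $u\mapsto\min(1,\max(-1,u))$ pointwise decreases $e_{\ve}$, so $\lte(\Om)$ is unchanged upon restricting to paths $\ze\in\sB$ with $|\ze(t)|\leq 1$. Such a minimizing sequence $\{\ze_i\}$ is uniformly bounded in $\HN$; any $\{v_i\}$ with $d(v_i,\ze_i([0,1]))\ra 0$ is then $\HN$-bounded, and $\cE'(v_i)\ra 0$ combined with Rellich compactness and standard elliptic regularity yields a strongly $\HN$-convergent subsequence. Theorem \ref{t.min.max}(a) then produces a smooth critical point of $E_{\ve}$ at level $\lte(\Om)$ lying in $L$; the strong maximum principle applied with $W'(\pm 1)=0$ forces its values into $(-1,1)$. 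This already yields existence and the localization, but not the index bound.

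The main obstacle is the Morse index bound, since $K_c$ may a priori be uncountable, so part (b) does not apply directly. I resolve this via a metric perturbation. Using Theorem \ref{t.finite}, choose $g_k\ra g$ in $C^{\infty}(N)$ with each $g_k\in\widetilde{\cM}$ and $\ve^{-1}\notin\textup{Spec}(-\De_{g_k})$, so that $\cZ_{\ve,g_k}$ is finite. By Remark \ref{r.intersection}, for $k$ large Proposition \ref{p.intersection} holds for $g_k$ with the same constants $\ve^*,\et^*$; repeating the setup above for $g_k$ gives a min-max value $c_k=\tilde{\la}_{\ve}^{g_k}(\Om)$ and a linking set $L_k$. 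Since $K_{c_k}\cap L_k\subset\cZ_{\ve,g_k}$ is finite, it is an isolated critical set in $K_{c_k}$, and $\cE''(u)=-\ve^2\De_{g_k}+W''(u)$ is self-adjoint Fredholm on $\HN$ (its spectrum is discrete because $N$ is closed). Theorem \ref{t.min.max}(b) then yields $\vt_{\ve,k}\in K_{c_k}\cap L_k$ with $\textup{Ind}(\vt_{\ve,k})\leq 1$.

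It remains to pass to the limit $k\ra\infty$. The family $\{\vt_{\ve,k}\}$ is uniformly bounded by $1$ in $L^{\infty}$ and has uniformly bounded energy since $c_k\ra\lte(\Om)$ by continuity of the width in the metric. Schauder estimates applied to $AC_{\ve}^{g_k}(\vt_{\ve,k})=0$ give subsequential $C^{2,\al}$ convergence to a smooth solution $\vt_{\ve}$ of $AC_{\ve}(\vt_{\ve})=0$ for $g$, with $E_{\ve}(\vt_{\ve})=\lte(\Om)$ and $E_{\ve}(\vt_{\ve},\Om)\geq\et^*$ by continuity; the strict bound $|\vt_{\ve}|<1$ is again by the strong maximum principle. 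Finally, the lower semicontinuity of the number of negative eigenvalues of $-\ve^2\De+W''(\vt_{\ve,k})\ra -\ve^2\De+W''(\vt_{\ve})$ under $C^0$ convergence of the potential gives $\textup{Ind}(\vt_{\ve})\leq\liminf_k\textup{Ind}(\vt_{\ve,k})\leq 1$, completing the proof.
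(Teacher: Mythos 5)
Your proposal is correct and follows essentially the same route as the paper: the same choice of \(B_0\), \(B_1\) and of the linking set \(L\) in Theorem \ref{t.min.max}, Palais--Smale along truncated minimizing paths, and the index bound via Theorem \ref{t.finite} together with a metric-perturbation and compactness argument. The only minor deviation is that you keep \(\ve\) fixed and perturb only the metric, which needs a short extra density argument (e.g. scaling plus Baire category) to produce \(g_k\in\widetilde{\cM}\) near \(g\) with \(\ve^{-1}\notin\textup{Spec}(-\De_{g_k})\), whereas the paper sidesteps this by also perturbing the parameter, choosing \(\ep_i\ra\ve\) with \(\ep_i^{-1}\notin\textup{Spec}(-\De_{g_i})\).
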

\begin{proof}
To prove this theorem, we apply Theorem \ref{t.min.max} to the functional \(E_{\ve}:\HN\ra \bbr\), \(0<\ve\leq \ve^*\). In Theorem \ref{t.min.max}, we set 
\begin{equation}
B_0=\left\{u\in \HN:u\big|_{\Om}\equiv 1\right\};\quad B_1=\left\{u\in \HN:u\big|_{\Om}\equiv -1\right\};
\end{equation}
\(\sF=\sB\) so that \(c=\tilde{\la}_{\ve}(\Om)\) and
\begin{equation}\label{e.def.L}
L= \{u\in \HN:\Ee{u}\geq \tilde{\la}_{\ve}(\Om)\}\cap\left\{u\in \HN:\Ee{u,\Om}\geq \et^*\right\}.
\end{equation}
Since \(u\big|_{\Om}\equiv 1\) or \(u\big|_{\Om}\equiv -1\) imply that \(\Ee{u,\Om}=0\), the condition (a1) of Theorem \ref{t.min.max} is satisfied. By Proposition \ref{p.intersection}, the condition (a2) is also satisfied. It follows from \eqref{e.def.L} that (a3) is satisfied as well. Following \cite{G}*{Section 4}, if \(\{\tilde{h}_i\}_{i=1}^{\infty}\) is an arbitrary minimizing sequence for \(E_{\ve}\) in \(\sB\), we define \(\{h_i\}_{i=1}^{\infty}\subset \sB\) by 
\[h_i(t)=\min\{1,\max\{-1,\tilde{h}_i(t)\}\}.\]
Then
\begin{equation}\label{pm.1}
-1\leq h_i(t)\leq 1\quad\forall\;i\in\bbn,\;t\in[0,1]
\end{equation}
and \(E_{\ve}(h_i)\leq E_{\ve}(\tilde{h}_i)\). Hence \(\{h_i\}\) is again a minimizing sequence. By \cite{G}*{Proposition 4.4}, \(E_{\ve}\) satisfies the Palais-Smale condition along \(\{h_i\}\). \tf by Theorem \ref{t.min.max}, part (a), \tes \(\vartheta_{\ve}\in \cK\left(\{h_i\}\right)\) \st \[AC_{\ve}(\vartheta_{\ve})=0,\quad \Ee{\vartheta_{\ve}}=\tilde{\la}_{\ve}(\Om),\quad \Ee{\vartheta_{\ve},\Om}\geq \et^*.\]
Moreover, by \eqref{pm.1}, \(\md{\vartheta_{\ve}}\leq 1\); hence by the strong maximum principle \(\md{\vartheta_{\ve}}< 1\). In addition, if the ambient \mt \(g\in \widetilde{\cM}\) (where \(\widetilde{\cM}\) is as defined in Theorem \ref{t.finite}) and \(\ve^{-1} \notin \text{Spec}(-\De_g)\), then \(\cZ_{\ve,g}\) is finite. In that case, the condition (b1) of Theorem \ref{t.min.max}, part (b) is satisfied and one can ensure that \(\vartheta_{\ve}\) satisfies \(\text{Ind}(\vartheta_{\ve})\leq 1\). 

To get the Morse index upper bound for arbitrary \mt \(g\) and \(\ve\in (0,\ve^*]\), we use an approximation argument. Since, by Theorem \ref{t.finite}, \(\widetilde{\cM}\) is a generic subset of \(\cM\), it is possible to choose \(\{g_i\}_{i=1}^{\infty}\subset \widetilde{\cM}\) \st \(g_i\) converges to \(g\) smoothly. Let \(\{\ep_i\}_{i=1}^{\infty}\) be a sequence in \((0,\ve^*]\) \st \(\ep_i^{-1}\notin\text{Spec}(-\De_{g_i})\) and \(\ep_i\ra \ve\). Since the width \(\mathbb{W}(\Om)\) depends continuously on the ambient metric \cite{IMN}*{Lemma 2.1}, \(\Om\) is a good set \wrt \(g_i\) if \(i\) is sufficiently large. \tf by the above discussion, Theorem \ref{t.finite}, Theorem \ref{t.min.max}, Proposition \ref{p.intersection} and Remark \ref{r.intersection} imply that for all \(i\) sufficiently large, \tes \(\vartheta^i:N\ra (-1,1)\) \st
\begin{equation}\label{appr}
AC_{\ep_i,g_i}(\vartheta^i)=0;\quad \text{Ind}_{g_i}(\vartheta^i)\leq 1;\quad E_{\ep_i,g_i}\left(\vartheta^i\right)=\tilde{\la}_{\ep_i,g_i}(\Om);\quad E_{\ep_i,g_i}\left(\vartheta^i,\Om\right)\geq \et^*.
\end{equation}
In this equation, the subscript \(g_i\) indicates that these quantities are computed \wrt the \mt \(g_i\). Since \(\md{\vartheta^i}<1\), by the elliptic regularity and the Arzela-Ascoli theorem, \tes \(\vartheta^{\infty}:N\ra [-1,1]\) \st up to a subsequence \(\vartheta^i\) converges to \(\vartheta^{\infty}\) in \(C^{2}(N)\). Using \eqref{appr} and the fact that the min-max quantity \(\tilde{\la}_{\ve}(\Om)\) depends continuously on the ambient metric \cite{GG2}*{Lemma 5.4}, we obtain
\begin{equation}
AC_{\ve,g}(\vartheta^{\infty})=0;\quad \text{Ind}_{g}(\vartheta^{\infty})\leq 1;\quad E_{\ve,g}\left(\vartheta^{\infty}\right)=\tilde{\la}_{\ve,g}(\Om);\quad E_{\ve,g}\left(\vartheta^{\infty},\Om\right)\geq \et^*.
\end{equation}
Furthermore, by the strong maximum principle, \(\md{\vartheta^{\infty}}<1\). This finishes the proof of Theorem \ref{t.second.thm}.
\end{proof}
\begin{proof}[Proof of Theorem \ref{t:main.thm}]
As proved in \cite{CL}*{Section 8.1}, \(\vol(M)<\infty\) implies that \tes a sequence \(\{U_i\}_{i=1}^{\infty}\), where each \(U_i\subset M\) is a bounded open set \w smooth boundary, \st \(U_{i}\subset U_{i+1}\) for all \(i\in \bbn\) and \(\lim\limits_{i\ra\infty}\cH^n(\del U_i)=0.\) As a consequence, \tes \(i_0\in\bbn\) \st \(U_{i_0}\) is a good set. For simplicity, let us denote \(U_{i_0}\) by \(U\).

The following proposition was proved in \cite{Mon}*{Section 12.2}.
\begin{pro}\cite{Mon}*{Section 12.2}\label{p.Morse.extn}
Let \(M'\) be a complete \Rm \mf and \(f':M'\ra[a_0,\infty)\) be a proper Morse function. \sps \(a_1\) is a regular value of \(f'\) and define
\[\cR=\{x\in M':f'(x)\leq a_1\}.\]
Then \tes a closed \Rm \mf \(N'\) and a Morse \fn \(f'':N'\ra [a_0,\infty)\) \st \(N'\) contains an isometric copy of \(\cR\), \(f''\) coincides with \(f'\) on \(\cR\) and \(f''>a_1\) on \(N'\setminus\cR\).
\end{pro}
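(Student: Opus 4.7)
The plan is to close up $\cR$ by gluing a compact ``cap'' to it along $\del\cR$, keeping the metric on $\cR$ untouched, and then to extend $f'$ smoothly to a Morse function on the doubled object with $f''>a_1$ off $\cR$. Since $f'$ is proper and $a_1$ is a regular value, $\cR$ is a compact smooth \mf with boundary $\Si:=(f')^{-1}(a_1)$, so the combinatorial side is clear; the real work is arranging $C^{\infty}$-compatibility of both the metric and $f''$ across $\Si$.

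\emph{Step 1 (height coordinate near $\Si$).} I would first normalize the collar coordinate on $\cR$ so that $f'$ becomes the height function. Using that $a_1$ is regular and $\Si$ is compact, flowing along $\na f'/|\na f'|^{2}$ produces $\ve>0$ and a diffeomorphism $\Ph:\Si\times (a_1-\ve,a_1+\ve)\ra U\subset M'$ with $f'\circ\Ph(x,r)=r$. On the inward half $\Si\times (a_1-\ve,a_1]$ (which lies in $\cR$), write the pulled-back metric in block form
\[g'\big|_{U\cap\cR}=h_{r}(x)+2\,\om_{r}(x)\,dr+\ph_{r}(x)\,dr^{2},\]
where $h_{r},\om_{r},\ph_{r}$ are smooth families on $\Si$ of a metric, a $1$-form, and a positive function.

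\emph{Step 2 (construction of $N'$).} Let $K$ be a second copy of $\cR$, regarded as a compact smooth \mf with boundary $\Si$, and fix a collar $\Si\times [a_1,a_1+\ve)$ of $\del K$ with coordinate $r\in[a_1,a_1+\ve)$. By Borel's theorem I can choose smooth families $\tilde h_r,\tilde\om_r,\tilde\ph_r$ for $r\in[a_1,a_1+\ve)$ whose Taylor expansions at $r=a_1$ coincide with those of $h_r,\om_r,\ph_r$ from the $\cR$-side to infinite order. Since $\tilde h_{a_1}=h_{a_1}$ is positive definite on $\Si$, the formula $\tilde h_r+2\tilde\om_r\,dr+\tilde\ph_r\,dr^{2}$ defines a Riemannian metric on $\Si\times [a_1,a_1+\ve')$ for some $\ve'\in(0,\ve]$, and a partition-of-unity argument extends it to a smooth metric $g_{K}$ on all of $K$. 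Setting $N':=\cR\cup_{\Si}K$ with $\Si$ identified via the identity, the common coordinate $r$ gives a smooth chart across $\Si$, and the matched Taylor expansions make the glued metric $C^{\infty}$. By construction $\cR$ sits isometrically in $N'$.

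\emph{Step 3 (the Morse function $f''$).} On the collar of $\del K$ I would set $f''(x,r):=r$. Under the gluing this agrees with $f'(x,r)=r$ on the $\cR$-side to infinite order, giving a $C^{\infty}$ function on a neighborhood of $\Si$ in $N'$ with no critical points there. Next extend $f''$ smoothly to all of $K$ (any smooth extension), then add a nonnegative bump supported strictly inside $K$ so that $f''>a_1$ off $\Si$ while the collar definition and the restriction to $\cR$ remain unchanged. Finally perturb $f''|_K$ by a small smooth function supported in the interior of $K$ to make $f''|_K$ Morse, using $C^{\infty}$-density of Morse functions on the compact \mf-with-boundary $K$; for a sufficiently small perturbation both the inequality $f''>a_1$ on $K\setminus\Si$ and the non-vanishing of $df''$ in the collar persist. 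Since $f'|_{\cR}$ is already Morse, $f''$ has no critical points near $\Si$, and $f''|_K$ is Morse, the resulting $f'':N'\ra[a_0,\infty)$ is Morse, agrees with $f'$ on $\cR$, and satisfies $f''>a_1$ on $N'\setminus\cR$.

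\emph{Main obstacle.} The delicate point is Step 2: producing a smooth metric across $\Si$ \emph{without} modifying the given metric on $\cR$. Naively doubling $\cR$ along $\Si$ would in general give only a $C^{0}$ metric because the second fundamental form of $\Si$ need not vanish. Choosing the height function $r=f'$ (rather than the inward distance) as the collar coordinate is what makes the matching conditions $\del_r^k\tilde h_r|_{a_1}=\del_r^k h_r|_{a_1}$ (and analogously for $\tilde\om,\tilde\ph$) sign-free, reducing the problem to a standard application of Borel's lemma; positive definiteness of the resulting symmetric $2$-tensor on a small collar follows by continuity from positive definiteness of $h_{a_1}$.
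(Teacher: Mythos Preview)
The paper does not give its own proof of this proposition; it is simply quoted from \cite{Mon}*{Section 12.2}. Your argument is therefore not a comparison target but a stand-alone proof, and it is essentially correct.

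The strategy---use the gradient flow of $f'$ to obtain a collar $\Si\times(a_1-\ve,a_1]$ on which $f'$ is the height coordinate, invoke Borel's lemma to extend the metric coefficients $h_r,\om_r,\ph_r$ smoothly across $\Si$ onto a compact cap $K$, and then extend $f'$ as the height function on the cap's collar, bump it above $a_1$, and perturb to Morse---is standard and works. One point in Step~3 is worth making fully explicit: the Morse perturbation should be taken with support in $K\setminus\big(\Si\times[a_1,a_1+\ve'/2)\big)$. On the collar piece $\Si\times[a_1,a_1+\ve'/2)$ the function $f''(x,r)=r$ already has no critical points, and on the compact complement one can first arrange $f''>a_1+\de$ for some fixed $\de>0$ via the bump; a $C^2$-small perturbation supported there then preserves both the strict inequality $f''>a_1$ on $K\setminus\Si$ and the absence of critical points in the collar. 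With this clarification your Steps~1--3 assemble into a complete proof.
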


Coming back to the proof of Theorem \ref{t:main.thm}, we choose a proper Morse \fn \(f_1:M\ra [0,\infty)\) with $\min\limits_M f_1=0$. Let \(t_1\) be a regular value of \(f_1\) so that \(U\subset\subset f_1^{-1}\left([0,t_1/2)\right)\). Furthermore, by suitably modifying $f_1$, we can assume that in the interval $[0,t_1/2]$, $f_1$ has no critical value which is a non-global local maxima or minima. Let
\[f_2=\frac{1}{3}+\frac{2f_1}{3t_1}.\]
Then \(f_2:M\ra [1/3,\infty)\), \(U\subset\subset f_2^{-1}\left([1/3,2/3)\right)\) and we set \(\tilde{U}=f_2^{-1}([1/3,1])\). 

We choose an increasing sequence \(\{s_i\}_{i=1}^{\infty}\) \st \(s_1\geq 1\), each \(s_i\) is a regular value of $f_2$ and \(s_i\ra \infty\) as \(i\ra \infty\). Let 
\[Q_i=\{x\in M:f_2(x)< s_i\},\quad \bar{Q}_i=\{x\in M:f_2(x)\leq s_i\}. \]
By Proposition \ref{p.Morse.extn}, \tes a \seq \(\{N_i\}_{i=1}^{\infty}\) of closed \Rm \mfs \st \(N_i\) contains an isometric copy of \(\bar{Q}_i\). Moreover, for every \(i\), \tes a Morse \fn \(\tilde{f}_i:N_i\ra [1/3,\infty)\) \st \(\tilde{f}_i\) coincides with \(f_2\) on \(\bar{Q}_i\) and \(\tilde{f}_i>s_i\) on \(N_i\setminus \bar{Q}_i\). In particular, \(N_i\) contains isometric copies of \(U\) and \(\tilde{U}\);  suppose \(\cU_i\) denotes the isometric copy of \(U\) in \(N_i\). Setting \(N=N_i\) and \(f=\tilde{f}_i\) in Proposition \ref{p.intersection} and Remark \ref{r.intersection} and using Theorem \ref{t.second.thm}, we obtain \(\ve_0,\;\et_0>0\), which depend only on \(U\), \(\tilde{U}\), the ambient metric on \(M\) restricted to \(\tilde{U}\) and \(f_2\big|_{\tilde{U}}\) \st the following holds.\footnote{In particular, \(\ve_0\) and $\et_0$ do not depend on $i$.} For all \(0<\ve\leq \ve_0\), \tes \(\vt_{\ve,i}:N_i\ra (-1,1)\) satisfying
\begin{equation}
AC_{\ve}(\vt_{\ve,i})=0;\quad \text{Ind}(\vt_{\ve,i})\leq 1;\quad \Ee{\vt_{\ve,i}}=\tilde{\la}_{\ve}\left(\cU_i\right);\quad \Ee{\vt_{\ve,i},\cU_i}\geq \et_0.
\end{equation}

Let \(b\in [1/3,\infty)\) such that
\begin{equation}
\left\{x\in M:d(x,U)\leq 2\sqrt{\ve_0}\right\}\subset f_2^{-1}\left([1/3,b]\right).
\end{equation}
One can modify \(f_2\) on \(f_2^{-1}\left([1/3,b]\right)\) and define another proper Morse \fn \(f_3:M\ra[1/3,\infty)\) so that 
\begin{equation}\label{U.contained}
\left\{x\in M:d(x,U)\leq 2\sqrt{\ve_0}\right\}\subset f_3^{-1}\left([1/3,b]\right)
\end{equation}
and in the interval $[1/3,b]$, $f_3$ has no critical value which is a non-global local maxima or minima. For \(1/3\leq t\leq b\), let \(d^t:M\ra \bbr\) be defined by
\begin{equation}
d^t(x)=\begin{cases}
-d(x,f^{-1}_3(t)) & \text{ if } f_3(x)\leq t;\\
d(x,f^{-1}_3(t)) & \text{ if } f_3(x)\geq t.
\end{cases}
\end{equation}
\eqref{U.contained} and the fact that $b$ is not a local maximum value of $f_3$ imply
\begin{equation}\label{subset}
U\subset \left\{d^{b}\leq-2\sqrt{\ve_0} \right\}.
\end{equation}
We choose \(i_1\in \bbn\) so that
\begin{equation}\label{intQ}
\left\{d^{b}\leq2\sqrt{\ve_0} \right\}\subset Q_{i_1}.
\end{equation}
For $0<\ve\leq \ve_0$, let \(\ze_{\ve}:[0,b] \ra H^1(Q_{i_1})\) be defined by 
\begin{equation}
\ze_{\ve}(t)=\begin{cases}
\hat{q}_{\ve}\circ d^t & \text{if } \frac{1}{3}\leq t \leq b \\
1-3t(1-\ze_{\ve}(1/3)) & \text{if } 0 \leq t \leq \frac{1}{3},
\end{cases}
\end{equation}
where \(\qte\) is as defined in \eqref{def.qte}. $\ze_{\ve}$ is continuous (see the discussion after equation \eqref{w.2.eps}), \(\ze_{\ve}(0)\equiv 1\) and \eqref{subset} implies that \(\ze_{\ve}(b)\big|_U\equiv -1.\) Further, by \cite{G}*{Section 9},
\begin{equation}
\La_{\ve}:=\sup_{0\leq t\leq b}\Ee{\ze_{\ve}(t)}
\end{equation}
satisfies
\begin{equation}\label{b2}
\limsup_{\ve\ra 0^{+}}\La_{\ve}\leq 2\si\sup_{1/3\leq t\leq b} \cH^n\left(f_3^{-1}(t)\right).
\end{equation}
For each \(i\geq i_1\), $N_i$ contains an isometric copy of $Q_{i_1}$. Hence \eqref{intQ} implies that \(\ze_{\ve}\) canonically defines a \cts map \(\ze_{\ve,i}:\left[0,b\right]\ra H^1(N_i)\) ($\ze_{\ve,i}(t)\equiv 1$ on \(N_i\setminus Q_{i_1}\) \fa $t\in[0,b]$) so that \(\ze_{\ve}(0)\equiv 1\) and \(\ze_{\ve}(b)\big|_{\cU_i}\equiv -1.\) Thus
\begin{equation}\label{b1}
\tilde{\la}_{\ve}\left(\cU_i\right)\leq \La_{\ve}\quad \forall\; i\geq i_1.
\end{equation}

Restricting \(\vt_{\ve,i}\) to the isometric copy of \(Q_i\) contained in \(N_i\), one gets \(\fu_{\ve,i}:Q_i\ra (-1,1)\) \st
\begin{equation}\label{e.b1}
AC_{\ve}(\fu_{\ve,i})=0;\quad \text{Ind}(\fu_{\ve,i})\leq 1;\quad \Ee{\fu_{\ve,i},Q_i}\leq\tilde{\la}_{\ve}\left(\cU_i\right);\quad \Ee{\fu_{\ve,i},U}\geq \et_0.
\end{equation}
For each fixed \(j\in \bbn\), by the elliptic estimates, \(\nm{\fu_{\ve,i}}_{C^{2,\al}(Q_j)}\) is uniformly bounded for all \(i>j\) (since \(\md{\fu_{\ve,i}}<1\) for all \(i\in \bbn\)). Using a diagonal argument and the Arzela-Ascoli theorem, we conclude that \tes \(\fu_{\ve}:M\ra [-1,1]\) \st a subsequence \(\{\fu_{\ve,i_k}\}\) converges to \(\fu_{\ve}\) in \(C^2_{\text{loc}}(M)\). Hence, using \eqref{e.b1} and \eqref{b1},
	\begin{equation}
	AC_{\ve}(\fu_{\ve})=0;\quad \text{Ind}(\fu_{\ve})\leq 1;\quad \Ee{\fu_{\ve}}\leq \La_{\ve};\quad \Ee{\fu_{\ve},U}\geq \et_0.
	\end{equation}
By the strong maximum principle, \(\md{\fu_{\ve}}<1\). Further, as mentioned in \eqref{b2}, \(\limsup_{\ve\ra 0^{+}}\La_{\ve}<\infty\). This completes the proof of Theorem \ref{t:main.thm}.
\end{proof}

\begin{proof}[Proof of Theorem \ref{t.minimal.hyp} using Theorem \ref{t:main.thm}]
Let 
\[\Om_1\subset\dots\subset\Om_{i}\subset\Om_{i+1}\subset\dots\] 
be an exhaustion of \(M\) by bounded open subsets \w smooth boundaries and \(U\subset\subset \Om_1\). Using Theorem \ref{thm interface} and a diagonal argument, \te a \seq \(\{\ep_i\}_{i=1}^{\infty}\) converging to \(0\) and a stationary, integral varifold \(V_k\) in \(\Om_k\) (for each \(k\in\bbn\)) \st the following conditions are satisfied.
\begin{itemize}
\item
\begin{equation}\label{eq0}
V\left[\fu_{\ep_i}\big|_{\Om_k}\right]\ra V_k
\end{equation}
in the sense of varifolds.
\item \(\text{spt}(V_k)\) is a minimal \hy \w optimal regularity in $\Om_k$.
\item 
\begin{equation}\label{e2}
	\nm{V_k}(\text{Clos}(U))\geq\frac{1}{2\si}\liminf_{i\ra \infty}E_{\ep_i}\left(\fu_{\ep_i},U\right).
\end{equation}
\item
\begin{equation}\label{e3}
\nm{V_k}(\Om_{k})\leq\frac{1}{2\si}\limsup_{i\ra \infty}E_{\ep_i}\left(\fu_{\ep_i},\Om_{k}\right)\leq \frac{1}{2\si}\limsup_{i\ra \infty}E_{\ep_i}\left(\fu_{\ep_i}\right).
\end{equation}
\end{itemize}
By \eqref{eq0}, \(V_i\mres\Om_j=V_j\) if \(i>j\). \tf \tes a stationary, integral varifold \(V\) in \(M\) \st \(V\mres\Om_i=V_i\) and \(\text{spt}(V)\) is a minimal \hy \w optimal regularity. Further, \eqref{e1}, \eqref{e2} and \eqref{e3} imply
\[0<\nm{V}\left(\text{Clos}(U)\right)\leq \nm{V}(M)<\infty.\]
\end{proof}
\medskip

\bibliographystyle{amsalpha}
\bibliography{AC_noncpt}

\end{document}